\newcommand{\cmark}{\ding{51}}%
\newcommand{\xmark}{\ding{55}}%
\DeclareMathOperator*{\argmin}{argmin}
\newtheorem{theorem}{Theorem}
\newtheorem{lemma}{Lemma}
\newtheorem{assumption}{Assumption}
\newtheorem{remark}{Remark}
\def\grad{\nabla}
\def\bv{\mathbf{v}}
\def\bx{\mathbf{x}}  
\def\by{\mathbf{y}}
\def\cC{\mathcal{C}}
\def\cE{\mathcal{E}}
\def\cG{\mathcal{G}}
\def\cI{\mathcal{I}}
\def\cN{\mathcal{N}}
\def\cO{\mathcal{O}}
\def\cV{\mathcal{V}}
\def\cX{\mathcal{X}}
\def\mR{\mathbb{R}}
\def\smskip{\smallskip}
\def\texitem#1{\par\smskip\noindent\hangindent 25pt
               \hbox to 25pt {\hss #1 ~}\ignorespaces}
\def\norm#1{\left\|#1\right\|}
\newcommand{\BEAS}{\begin{eqnarray*}}
\newcommand{\EEAS}{\end{eqnarray*}}
\newcommand{\BEA}{\begin{eqnarray}}
\newcommand{\EEA}{\end{eqnarray}}
\newcommand{\BEQ}{\begin{eqnarray}}
\newcommand{\EEQ}{\end{eqnarray}}
\newcommand{\BIT}{\begin{itemize}}
\newcommand{\EIT}{\end{itemize}}
\newcommand{\BNUM}{\begin{enumerate}}
\newcommand{\ENUM}{\end{enumerate}}
\newcommand{\BA}{\begin{array}}
\newcommand{\EA}{\end{array}}
\newcommand{\reals}{\mathbb{R}}
\newcommand{\integers}{\mathbb{Z}}
\newcommand{\diag}{\mathop{\bf diag}}
\newif\ifpagenumbering
\newsavebox{\theorembox}
\newsavebox{\lemmabox}
\newsavebox{\defnbox}
\newsavebox{\corollarybox}
\newsavebox{\assbox}
\savebox{\theorembox}{\noindent\bf Theorem}
\savebox{\lemmabox}{\noindent\bf Lemma}
\savebox{\defnbox}{\noindent\bf Definition}
\savebox{\corollarybox}{\noindent\bf Corollary}
\newtheorem{defn}{\usebox{\defnbox}}
\def\sa#1{\textcolor{black}{#1}}
\def\fin#1{\textcolor{black}{#1}}
\def\rev#1{\textcolor{black}{#1}}
\newcommand{\vertiii}[1]{{\left\vert\kern-0.25ex\left\vert\kern-0.25ex\left\vert #1
    \right\vert\kern-0.25ex\right\vert\kern-0.25ex\right\vert}}
\def\BibTeX{{\rm B\kern-.05em{\sc i\kern-.025em b}\kern-.08em
    T\kern-.1667em\lower.7ex\hbox{E}\kern-.125emX}}
\title{A Fast Row-Stochastic \fin{Decentralized Method for Distributed Optimization} Over Directed Graphs\thanks{This work was supported by PDMA (NORTE-08-5369-FSE000061), SNAP - NORTE-01-0145-FEDER-000085, funded by ERDF NORTE2020/PORTUGAL2020; RELIABLE (PTDC/EEI-AUT/3522/2020), R$\&$D Unit SYSTEC - Base (UIDB/00147/2020), Programmatic (UIDP/00147/2020) and ARISE (LA/
P/0112/2020) funded by national funds through FCT/MCTES (PIDDAC). 
The work of N. S. Aybat was supported by Office of Naval Research (ONR) through ONR research grant N00014-21-1-2271.}
\thanks{The authors are listed according to their contribution to the work, from the most to the least.}}
\author{
Diyako Ghaderyan\\
Research Center for Systems and Technologies(SYSTEC)\\
University of Porto(FEUP), Portugal\\
\texttt{dghaderyan@fe.up.pt}
\And
Necdet Serhat Aybat\\
Industrial and Manufacturing Engineering Department\\
Penn State University\\
University Park, PA 16802,\\
\texttt{nsa10@psu.edu}
\And
A. Pedro Aguiar\\
Research Center for Systems and Technologies(SYSTEC)\\
University of Porto(FEUP), Portugal\\
\texttt{pedro.aguiar@fe.up.pt}
\And
Fernando Lobo Pereira\\
Research Center for Systems and Technologies(SYSTEC)\\
University of Porto(FEUP), Portugal\\
\texttt{flp@fe.up.pt}}
\date{}
\begin{document}

\maketitle
\begin{abstract}
In this paper, we introduce a  fast row-stochastic decentralized algorithm, referred to as FRSD, {to solve consensus optimization problems over directed communication graphs.} 
The proposed algorithm only utilizes row-stochastic weights, {leading to certain practical advantages \sa{in broadcast communication settings} over those requiring column-stochastic weights}. {Under the assumption that
each node-specific function is smooth and strongly convex,} we show that 
{the FRSD iterate sequence} converges 
\rev{with a linear rate} {to the optimal consensus solution}.
\sa{In contrast to {the 
existing methods for directed networks}, FRSD enjoys linear convergence without 
employing a gradient tracking~(GT) technique \sa{explicitly}, rather it implements GT implicitly with the use of a novel momentum term, which leads to a significant reduction in communication and storage overhead for each node when FRSD is implemented for solving high-dimensional problems over small-to-medium scale networks.}  In the numerical {tests}, 
we compare FRSD with
{other state-of-the-art} methods, which use row-stochastic {and/or} column-stochastic weights.
\end{abstract}
\keywords{
Distributed optimization, {consensus}, directed graphs, linear convergence, {row-stochastic weights.}}
\allowdisplaybreaks
\vspace*{-3mm}
\section{Introduction }
{I}{n} recent {years},  
{rapid {advances} in} artificial intelligence and communication technologies have 
led to 
\sa{computational network systems} 
{over which one has to solve}
optimization problems with enormous, {physically distributed and/or private} data sets in order to achieve 
system level objectives 
such that every agent, represented by a node in the network, has to agree {on a common decision}. 
To reach an optimal consensus decision, 
{decentralized optimization techniques can be used to solve a consensus optimization problem in a distributed manner employing only local computations and communication among neighboring computing nodes} {that can directly communicate with each other}. The 
{classic} {consensus optimization problem has the following form:}
\begin{align}
\label{eqz1}
{{\bm{x}^{*}}\in}\argmin\limits_{{\bm{x}}{\in\reals^p}}{\bar{f}}({\bm{x}}){\triangleq}\dfrac{1}{n}\sum\limits_{i=1}^{n}f_{i}({\bm{x}}),
\end{align}
where {the objective function $\bar{f}$} is the 
{average of all} {individual} cost functions {$\{f_i\}_{i=1}^n$, where} $f_{i} : \mathbb{R}^{p}\rightarrow \mathbb{R}$ {is the private function of agent $i$.} 
This problem appears in 
{a variety of applications, e.g.}, 
sensor networks~\cite{rabbat2004distributed,khan2009diland}, 
distributed control~\cite{bullo2009distributed}, \rev{large-scale machine learning~\cite{cevher2014convex,boyd2011distributed,bekkerman2011scaling,raja2015cloud,assran2019stochastic,tsianos2012push,tsianos2012consensus},} distributed estimation~\cite{yuan2018exact}. 

{Next,  we first discuss the 
{previous work} 
{focusing on} undirected networks, and then we 
summarize some related methods proposed for distributed consensus optimization over \emph{directed} networks.} \sa{Let  $\cG=(\cV,\cE)$ be the network of collaborative agents, and $\tilde{\bm{x}}_i$ denote a local estimate of the optimal decision for $i\in\cV$. For merely convex objectives, we call $\tilde{\mathbf{x}}=[\tilde{\bm{x}}_i]_{i\in\cV}$ an $\epsilon$-solution if $|\frac{1}{n}\sum_{i\in\cV}f_i(\tilde{\bm{x}}_i)-f^*|\leq\epsilon$ and the consensus violation satisfies $\max\{\norm{\tilde{\bm{x}}_i-\tilde{\bm{x}}_j}:~(i,j)\in\cE\}\leq \epsilon$ for $i\in\cV$, where $f^*$ denotes the optimal value of \eqref{eqz1}. On the other hand, for strongly convex problems, we say \rev{that} $\tilde{\mathbf{x}}$ is $\epsilon$-optimal, if $\norm{\tilde{\bm{x}}_i-\bm{x}^*}\leq \epsilon$ for all $i\in\cV$.} 

{Inspired by the seminal work~\cite{tsitsiklis1986distributed}, the authors in~\cite{nedic2009distributed} proposed a distributed (sub)gradient 
method for solving \eqref{eqz1}}. {
{When each $f_i$ is closed convex, the method in~\cite{nedic2009distributed} is shown to have a} sublinear convergence rate, i.e., to compute an $\epsilon$-optimal solution, one needs {to evaluate $\cO(1/\epsilon^2)$ subgradients}
 -- {in contrast to linear or $\cO(1/\epsilon)$ sublinear convergence rates, the slower rate of subgradient methods is due to 
their use of a} diminishing step-size sequence or {of} a small fixed step size $\alpha=\cO(\epsilon)$.} 
{Moreover, when agents have simple closed convex constraint sets, distributed projected subgradient methods are proposed for solving
{
\begin{align}
\label{eq:constrained_consensus}
    \min\{\bar{f}(x):\ x\in\cX\};
\end{align} 
e.g., the method in~\cite{nedic2010constrained} solves \eqref{eq:constrained_consensus}
employing exact subgradient evaluations, while the method in~\cite{ram2010distributed} can handle subgradients corrupted by stochastic noise.}} 
{In \cite{duchi2011dual},
algorithms based on dual averaging of subgradients are studied for 
{solving \eqref{eq:constrained_consensus} assuming $\mathbf{0}\in\cX$}. {In \cite{ram2010distributed}, it is shown that 
an $\epsilon$-optimal solution can be computed with $\cO(n^{3}/\epsilon^2)$ iteration complexity that is independent of the network topology, whereas the algorithm proposed in \cite{duchi2011dual}
requires iteration complexity of $\cO(n^2/\epsilon^2)$ for paths or simple cycle graphs, $\cO(n/\epsilon^2)$ for 2-$d$ grids, and $\cO(1/\epsilon^2)$ for bounded degree expander graphs.}} 

{The authors in \cite{zhu2011distributed}  
{propose a} primal-dual subgradient algorithm to solve problems with a global constraint set defined as the intersection of local 
\sa{ones}, {i.e., $\cX=\cX_0\cap(\bigcap_{i=1}^n)\cX_i$ in \eqref{eq:constrained_consensus} such that each agent-$i$ only knows $f_i$, $\cX_i$ and $\cX_0$}}. In~\cite{jakovetic2014fast}, \sa{under the assumption that the gradients are bounded and Lipschitz,} an improved convergence rate of {$\cO(\log(k)/k^2)$} {is obtained by employing Nesterov acceleration.} 
{
\fin{In a similar setting, assuming convex functions with Lipschitz gradients, the EXTRA method~\cite{shi2015extra}, using a fixed step size and utilizing the difference of two consecutive gradients in each update, is shown to generate a sequence that converges with a $\cO(1/k)$ rate,} 
\fin{which improves to linear convergence}
under the additional assumption of strong convexity.} 
There are also distributed 
\fin{algorithms} based on 
alternating direction method of multipliers (ADMM) \fin{and the augmented Lagrangian method} achieving similar rates, e.g.,~\cite{wei20131,shi2014linear, mokhtari2016dqm, mokhtari2016decentralized} \fin{--some handling composite convex functions~\cite{aybat2017distributed,aybat2015asynchronous}, and some even handling complicated constraints~\cite{aybat16,aybat2019distributed}.}

{The works that we have discussed above are designed for undirected networks; hence, they correspond to balanced graphs if we treat undirected networks as a special case of directed networks.} {However,  
directed networks may well be unbalanced; this situation arises especially for directed time-varying networks. \rev{For general directed networks, the first works to employ push-sum consensus protocol~\cite{kempe2003gossip} (for computing an average over directed networks) within the distributed optimization framework (using the dual averaging method) are \cite{tsianos2012push,tsianos2012consensus}. A follow-up work in this direction is} the subgradient-push method proposed in~\cite{nedic2014distributed} that combines the push-sum protocol with the \fin{distributed} subgradient method~\cite{nedic2009distributed} (for minimization of convex functions). More precisely, \fin{the method in~\cite{nedic2014distributed}} applies to \eqref{eqz1} when each $f_i$ is a closed convex function and the directed communication network is time-varying, \sa{and achieves} a sublinear rate of $\cO(\log(k)/\sqrt{k})$ 
using a column-stochastic weight matrix and a diminishing step-size sequence.} 
{\sa{On the other hand, when when each $f_i$ is smooth \fin{(i.e., $\grad f_i$ is Lipschitz continuous)} and strongly convex, the DEXTRA algorithm proposed in~\cite{xi2017dextra}, which is a distributed method for directed graphs, achieves R-linear convergence rate;} it combines EXTRA~\cite{shi2015extra} with push-sum approach \cite{kempe2003gossip}. The step-size in DEXTRA is constant and should be carefully chosen belonging to a specific interval that may be unknown to the agents. 
Compared to DEXTRA, Push-DIGing \cite{nedic2017achieving} and ADD-OPT \cite{xi2017add} have a simpler step-size rule and can achieve R-linear rate on directed graphs with time-varying and static topology, respectively, using sufficiently small constant step-size.}
These approaches employ a column-stochastic weight matrix to achieve R-linear rate over strongly connected networks. Unlike the previous methods {that} {are based on push-sum}, {there are other works achieving \sa{linear convergence for the smooth and strongly convex setting} by employing both column-stochastic and row-stochastic weights, e.g.,}  $\mathcal{AB}$, {$\mathcal{AB}$m} and Push-Pull \cite{xin2018linear,xin2019distributed,pu2020push}. \sa{Later, $\mathcal{ABN}$ method is proposed in \cite{xin2019distributedNEST} which incorporates Nesterov's momentum term into $\mathcal{AB}$.} 

{It is important to note that designing column-stochastic weights requires the knowledge of neighbors' out-degree for each node; this requirement is impractical within broadcast-based communication systems.} To address this issue, 
{in \cite{xi2018linear}}, the authors proposed a method that only uses the \emph{row-stochastic} weights. \sa{This line of research, i.e., using only \emph{row-stochastic} weights, has attracted attention, and in follow-up papers, the algorithm in~\cite{xi2018linear} is extended to handle uncoordinated step-sizes in \rev{the} FROST algorithm~\cite{xin2019frost}, and to incorporate Nesterov acceleration leading to  \rev{the} FROZEN algorithm~\cite{xin2019distributedNEST}. Finally, \rev{the} D-DNGT algorithm proposed in~\cite{lu2020nesterov} employs heavy-ball
momentum and can handle nonuniform step-sizes.} \rev{Some recent work extended the synchronous methods for directed networks to the asynchronous computation setting, in which agents asynchronously update their iterates by using the currently available (possibly old) information, and they do not wait for the other agents to update in order to proceed to the next update, i.e., there is no global clock, e.g., \cite{tian2018asy,zhang2019asyspa,assran2020asynchronous}.}

\textit{Contributions:}
In this paper, we propose a fast row-stochastic decentralized algorithm, referred to as FRSD, to solve distributed consensus optimization problems  over directed communication networks. FRSD employs only row-stochastic weights, and we show that when $\{f_i\}_{i=1}^n$ are strongly convex and smooth, FRSD iterate sequence corresponding to a constant stepsize converges to the optimal consensus decision with a linear rate. While previous \sa{row-stochastic methods~\cite{xin2019distributedNEST,xi2018linear,xin2019frost,lu2020nesterov}} crucially depend on the gradient tracking technique to establish linear rate, in this paper we achieve the same result through introducing a novel momentum term \sa{which leads to \emph{implicit} gradient tracking}, \sa{i.e., FRSD does \emph{not} employ gradient tracking \emph{explicitly} at the node level\fin{--hence, 
it does not require any computation simultaneously involving $\nabla f_{i}(\bm{x}_{i}(k+1))$ and $\nabla f_{i}(\bm{x}_{i}(k))$ for any node $i\in\cV$ in the $k$-th iteration;} but, it still manages to implement gradient tracking in an \textit{implicit} manner. \rev{This new dynamics proposed in this paper leads to: (i) reduction in the data stored, and (ii) reduction in the data broadcast, for each node. More precisely, FRSD does not need to store  $\bm{x}$ iterate from the previous iteration while it is needed for all other methods explicitly using the gradient tracking term; furthermore, FRSD also eliminates the need for broadcasting a variable related to gradient tracking.
In summary, in FRSD any agent-$i$ only needs to store a $2p+n$-dimensional vector, and to broadcast $n+p$-dimensional vector. Comparing with the other row-stochastic methods Xi-\rev{r}ow, FROZEN, and D-DNGT, communication requirement decreases from $2p+n$ to $p+n$. For settings where $p\gg n$, e.g., $n\approx 100$ nodes collectively solving an image/video processing problem with $p\approx 10^6$, this reduction is significant. \rev{The} reduction in storage requirement is even more significant, see Table~\ref{table:comparison}.}}
\sa{In the numerical tests, we empirically show that FRSD 
is competitive against the other state-of-the-art methods: \fin{Xi-row~\cite{xi2018linear}, D-DNGT~\cite{lu2020nesterov}, FROZEN and  $\mathcal{ABN}$~\cite{xin2019distributedNEST}, $\mathcal{AB}$~\cite{xin2018linear}, $\mathcal{AB}$m~\cite{xin2019distributed}, Push-DIGing~\cite{nedic2017achieving} and Push-Pull~\cite{pu2020push}.}}

\textit{Notation:}
In this paper, the bold letters denote vectors, \sa{e.g.,} {$\bm{x}\in\reals^p$, and $\left[ \bm{x} \right]_{j}$ denotes the {$j$-th} element of $\bm{x}$.} The vector $\bm{0}_{n}$ and $\bm{1}_{n}$ represent the n-dimensional vectors of all zeros and ones. The uppercase of letters {are reserved for matrices; given $X\in\reals^{n\times n}$, $\diag(X)\in\reals^{n\times n}$ denotes the diagonal matrix of which diagonal is equal to that of $X\in\reals^{n\times n}$. Moreover, given $\bm{v}\in\reals^n$, $\diag(\bm{v})$ is a diagonal matrix with its diagonal equal to $\bm{v}$.} 
{\sa{$I_{n}=[\bm{e}_1, \ldots \bm{e}_n ]_{i=1}^n$} denotes the $n \times n$ identity matrix, where $\bm{e}_i$ denotes the $i$-th unit vector.} {Throughout $\Vert\cdot \Vert$ denotes the Euclidean and the spectral norms depending on whether the argument is a vector or a matrix.} 
\section{ \sa{Design, Comparison and analysis of FRSD}}
{Consider the consensus optimization problem~\eqref{eqz1} over a communication network which is represented as a \emph{directed} graph $\mathcal{G}=(\mathcal{V},\mathcal{E})$, where $\mathcal{V}\triangleq\lbrace 1,2,\ldots,n\rbrace$ is the set of nodes (agents), and $\mathcal{E}$ is the set of directed communication links between the nodes. Each node $i\in\cV$ has a private cost function $f_{i} : \mathbb{R}^{p}\rightarrow \mathbb{R}$,  only known to node $i$. Furthermore,
for each node $i\in\cV$, we define \sa{its
in-neighbors as the set of nodes that can send information to node $i$, i.e., $\cN_{i}^{in}\triangleq\lbrace j\in\cV:~ (j,i)\in\mathcal{E} \rbrace \cup \lbrace i\rbrace$. Since FRSD is a row-stochastic method, any node $i\in\cV$ does not need to know its out-neighbors, i.e., the set of nodes receiving information from node $i$, which makes FRSD suitable for broadcast communication systems.}} 

{Throughout the paper we make the following assumptions.}
\begin{assumption}\label{assu1}
{$\cG$} is directed and strongly connected.
\end{assumption}
\begin{assumption}\label{assu2}
{For every $i\in\cV$, the local function $f_{i}$ is $L$-smooth, i.e., 
it is differentiable 
with a Lipschitz 
gradient:}
\begin{equation}\label{eqz2}
\Vert \nabla f_{i}({\bm{x}})-\nabla f_{i}({\bm{x}'})\Vert \leq L \Vert {\bm{x}} -{\bm{x}'}  \Vert,\quad{\forall~{\bm{x}},\bm{x}' \in \reals^{p}.}
\end{equation}
\end{assumption}
\begin{assumption}\label{assu3}
{For all $i\in\cV$,} $f_{i}$ is $\mu$-strongly convex, i.e.,  
\begin{equation}\label{eqz3}
 f_{i}({\bm{x}'}) \geq  f_{i}({\bm{x}}) + \nabla f_{i}({\bm{x}})^\top ({\bm{x}'}-{\bm{x}}) + \dfrac{\mu}{2} \parallel {\bm{x}'} -{\bm{x}}  \Vert^2
\end{equation}
for all ${\bm{x}},\bm{x}'\in\reals^{p}$. 
\end{assumption}
\begin{remark}
\sa{Under Assumption~\ref{assu3}, the optimal solution to \eqref{eqz1} is unique, denoted by $\bm{x}^*$.}
\end{remark}

\begin{defn}
\label{def:f}
Define
  \sa{$\bm{\mathrm{x}} \triangleq \left[ \bm{x}_1^\top,\ldots,\bm{x}_n\sa{^\top}\right]^\top \in \reals^{\sa{np}}$ and 
  $\bm{\mathrm{y}} \triangleq \left[ \bm{y}_1\sa{^\top},\ldots,\bm{y}_n\sa{^\top}\right]^\top \in \reals^{\sa{np}}$,}
where $\bm{x}_i$, $\bm{y}_i \in \reals^{p} $ are the local variables of agent-$i$ for $i\in\cV\triangleq\{1,\ldots,n\}$, and in an algorithmic framework, their values at iteration \sa{$k\geq 0$} are denoted by $\bm{x}_i(k)$ and  $\bm{y}_i(k)$ for $i\in\cV$.  Let $f:\reals^{\sa{np}}\to\reals$ be a function of local variables $\{\bm{x}_i\}_{i\in\cV}$ such that  $f(\bm{\mathrm{x}})\triangleq \sum_{i\in\cV} f_i(\bm{x}_i)$ for $\bm{\mathrm{x}}\in\reals^{\sa{np} }$ and \sa{$\grad f(\bm{\mathrm{x}}) \triangleq \left[ \grad f_1(\bm{x}_1)\sa{^\top},...,\grad f_n(\bm{x}_n)\sa{^\top}\right]^\top \in \reals^{\sa{np}}$,} where $\grad f_{i}(\bm{x}_i) \in \mathbb{R}^{p}$ denotes the gradient of $f_{i}$ at $ \bm{x}_i\in\reals^p$.
\end{defn}

We next propose our decentralized optimization algorithm FRSD to solve 
{the consensus optimization problem in~\eqref{eqz1}.}
\begin{algorithm}
\caption{FRSD} 
\label{algori1}
\textbf{Input:}  
{$\bm{x}_{i}(0)\in\reals^{p},$ $\forall~i\in\cV$,
$\alpha,\beta>0$ such that $\alpha \beta < 1$, 
\sa{${\overline{R}}=[r_{ij}]\in\reals^{n\times n}$} satisfies \eqref{eqz5}.}
\begin{algorithmic}[1]
\STATE {\sa{$\bm{y}_{i}(0)\gets \mathbf{0}_p$},  $\bm{v}_{i}(0)\gets {\bm{e}_{i}\in\reals^n}$ for $i\in\cV$}
\FORALL{ $k= 0, 1,...$}
\STATE \sa{Each $i\in\cN$ independently performs:}
\STATE \fin{$\bar{\bm{x}}_{i}(k)\gets\sum\limits_{{j\in\cN_i^{in}}}r_{ij}\bm{x}_{j}(k)$}
\IF{$k>0$}
\STATE \sa{$\bm{y}_{i}(k) \gets \bm{y}_{i}(k-1) + \beta\left( \bm{x}_{i}(k)-\fin{\bar{\bm{x}}_{i}(k)}\right)$} \label{eqz4c}
\ENDIF
\STATE {\small $\bm{x}_{i}(k+1) \gets 
\fin{\bar{\bm{x}}_{i}(k)}- \alpha \left(   \dfrac{\nabla f_{i}(\bm{x}_{i}(k))}{\left[   {\bm{v}_{i}}(k)\right]  _{i}}+\bm{y}_{i}(k)\right)$}
\STATE $\bm{v}_{i}(k+1) \gets \sum\limits_{{j\in\cN_i^{in}}}r_{ij}\bm{v}_{j}(k)$\label{eqz4a}
\ENDFOR 
\end{algorithmic}
\end{algorithm}
\vspace*{-3mm}
%
\subsection{FRSD Algorithm}
\label{sec:FRSD}
{Consider FRSD displayed in Algorithm~\ref{algori1}, at each iteration $k\geq 0$,} every agent $i\in\cV$ 
{updates} three variables $\bm{x}_{i}(k)$, $\bm{y}_{i}(k)$ $\in \reals^{p}$ and $\bm{v}_{i}(k) \in \mathbb{R}^{n}$, 
where $\alpha,\beta>0$ and $ \sa{\overline{R}=[r_{ij}]\in\reals^{n\times n}}$  are the parameters of the algorithm: $\alpha$ is the constant step-size and $\beta$ is a momentum parameter such that $\alpha \beta < 1 $, and $\sa{\overline{R}}$ is a row-stochastic {matrix such that}
\begin{align} \label{eqz5}
     r_{ij}=\left\{
                \begin{array}{ll}
              > 0, &\quad  j \in {\cN_{i}^{in},}\\
                  0,&  \quad \text{otherwise{;}}
                \end{array}
              \right.
    \quad \sum\limits_{{j\in \cV}} r_{ij}=1, \quad \forall~i{\in\cV}.
\end{align}
\begin{remark}
\label{rem:stationary1}
{Since $\cG$ is strongly connected and has finitely many nodes, the Markov chain corresponding to the transition probability matrix $\sa{\overline{R}}$ is irreducible and positive recurrent; moreover, since $\sa{\overline{R}}$ has a positive diagonal, it is also aperiodic; therefore, {there exists a stationary distribution $\bm{\pi}\in\reals^n$, i.e., \sa{$\bm{\pi}> 0$} and $\bm{1}_{n}^\top\bm{\pi}=1$ such that $\bm{\pi}^\top \sa{\overline{R}}=\bm{\pi}^\top$.}}
\end{remark}
\begin{defn}
\label{def:Vk}
Each node \sa{$i\in\cV$}, initialized with $\bm{v}_i(0)=\sa{\bm{e}_{i}}$,  generates a sequence $\{\bm{v}_i(k)\}_{k\geq 0}$ \fin{initialized from $\bm{v}_i(0)=\sa{\bm{e}_{i}}$,}  \sa{using the recursion in \texttt{line~\ref{eqz4a}} of the FRSD algorithm.} Define \sa{$R=\overline{R}\otimes I_p$}, \sa{$\overline{V}(k)\triangleq\left[\bm{v}_{1}(k),\ldots,\bm{v}_{n}(k)\right]^\top \in \mathbb{R}^{n\times n}$, i.e., $\bm{v}_i(k)$ is the $i$-th row of $\overline{V}(k)$,} \sa{set $V(k)\triangleq\overline{V}(k)\otimes I_p$} and $\widetilde{V}(k)\triangleq\diag(V(k))$. 
\end{defn}

{Given arbitrary $\bm{\mathrm{x}}(0)\in\reals^{\sa{np}}$, we initialize $\bm{\mathrm{y}}(0)\in\reals^{\sa{np}}$ such that $\bm{y}_i(0)=\sa{\bm{0}_{p}}$ for $i\in\cV$ and $\sa{\overline{V}(0)}=I_{n}$.
We present FRSD stated in Algorithm~\ref{algori1} in a compact form as follows:}
\begin{subequations}\label{eqqz6}
\begin{align}
\bm{\mathrm{x}}(k+1)& = R\bm{\mathrm{x}}(k) - \alpha \left( \bm{\mathrm{y}}(k) + \widetilde{V}^{-1}(k) \nabla {f}(\bm{\mathrm{x}}(k)) \right), \label{eqz6b} \\
\bm{\mathrm{y}}(k+1)& = \bm{\mathrm{y}}(k) + \beta\left( I_{n} - R \right) \bm{\mathrm{x}}(k+1), \label{eqz6c}\\
 \sa{\overline{V}}(k+1)& =  \sa{\overline{R}} \hspace{.1cm} \sa{\overline{V}}(k). \label{eqz6a}
\end{align}
\end{subequations}
\begin{remark}
\sa{In the numerical section \fin{(Section~\ref{sec:numerical})}, we also considered a corrected step variant of FRSD, called FRSD-CS, which is only different from FRSD in the step size choice, i.e., FRSD-CS is obtained by replacing \eqref{eqz6b} with
\begin{equation*}
    \bm{\mathrm{x}}(k+1) = R\bm{\mathrm{x}}(k) - \alpha\widetilde{V}(k) \left( \bm{\mathrm{y}}(k) +  \widetilde{V}\rev{^{-1}}(k) \nabla {f}(\bm{\mathrm{x}}(k)) \right).
\end{equation*}}%
\vspace*{-5mm}
\end{remark}
\sa{We empirically observed that FRSD parameter tuning is pretty stable: once the parameters are tuned, the performance of the algorithm is robust to slight changes to the problem parameters, e.g., changes in the graph topology (through adding/deleting an edge), in the number of agents (increase/decrease), in convexity modulus and Lipschitz constants. For instance, in the Huber-loss minimization and logistic regression problems we tested in the numerical section, fixing \fin{$\alpha,\beta>0$ such that} $\alpha\beta=0.05$ worked very well; thus, tuning hyper-parameters can be treated as one-dimensional search as in Xi-row, $\mathcal{AB}$, Push-DIGing, which are using a single parameter $\alpha>0$. Furthermore, if the problem in~\eqref{eqz1} will be solved repetitively with slightly changing data, then having an additional parameter might be helpful as it gives the practitioner an extra degree of freedom to optimize the performance given 
that \fin{one time parameter tuning would work fairly well under slight changes.}} 

\sa{In the rest, we focus on establishing asymptotic convergence guarantees for FRSD; therefore, we skip providing a termination condition as designing a locally implementable stopping mechanism for \emph{decentralized optimization} algorithms is itself a complicated task, attracting recent research interest~\cite{xie2017stop,prakash2019distributed}.}
\subsection{Related Methods}
\label{sec:related_work}
Next, we discuss the existing distributed optimization methods 
for a directed graph $\cG$ satisfying Assumption~\ref{assu1}. In particular, \sa{Push-DIGing using column-stochastic weights, $\mathcal{AB}$, $\mathcal{AB}$m, $\mathcal{ABN}$ and Push-Pull using both row- and column-stochastic weights, and \rev{Xi-row}, FROZEN, D-DGNT using only row-stochastic weights are closely related to our FRSD method and are described below in detail.}
\subsubsection{Push-DIGing}
 \rev{The} Push-DIGing algorithm, proposed in \cite{nedic2017achieving}, achieves a linear convergence rate 
{for solving \eqref{eqz1} over directed graphs (possibly time-varying)} with a constant step-size under Assumptions~\ref{assu1}-\ref{assu3}. 
{Given $\cG$,} Push-DIGing 
updates four variables ${\bm{x}_{i}(k), \bm{y}_{i}(k), \bm{z}_{i}(k)\in\reals^p}$ and $ v_{i}(k)\in{\reals}$ for each agent $i\in\cV$  
as follows:
\begin{align*}
v_{i}(k+1)& = \sum\limits_{\sa{j\in\cN_i^{in}}} b_{ij}  v_{j}(k),\\
\bm{x}_{i}(k+1)& = \sum\limits_{\sa{j\in\cN_i^{in}}} b_{ij} \left( \bm{x}_{j}(k) - \alpha\; \bm{y}_{j}(k) \right), \\
\bm{z}_{i}(k+1)& =\bm{x}_{i}(k+1)/ v_{i}(k+1),\\
\bm{y}_{i}(k+1)& = \sum\limits_{\sa{j\in\cN_i^{in}}} b_{ij} \bm{y}_{j}(k) + \nabla f_{i}(\bm{z}_{i}(k+1))-\nabla f_{i}(\bm{z}_{i}(k)),
\end{align*}
where $\alpha > 0$ is the step size and $\sa{\overline{{B}}}=\left[ b_{ij}\right] \in \mathbb{R}^{n\times n}$ is {a} column-stochastic 
\sa{matrix} {compatible with $\cG$}. The Push-DIGing algorithm {is} initialized with {$v_i(0)=1$}, $\bm{y}_{i}(0)=\nabla f_{i}(\bm{z}_{i}(0))$ and {from an arbitrary $\bm{x}_{i}(0)$ for each $i\in\cV$}. 
{Since directed graphs are not balanced in general}, Push-DIGing 
{adopts} a push-sum strategy, 
\sa{which utilizes column-stochastic weights, requiring  
each agent} to know its out-degree  
--this may not be practical within broadcast-based communication systems. \sa{Compared to using column-stochastic weights, adopting row-stochastic weights might be preferred in such a distributed environment 
\rev{where} each agent only manages the weights on information 
pertaining \rev{to} its in-neighbors.}
\subsubsection{$\mathcal{AB}$/$\mathcal{AB}$m/Push-Pull}
In contrast to Push-DIGing, \sa{$\mathcal{AB}$~\cite{xin2018linear} and  $\mathcal{AB}$m~\cite{xin2019distributed} algorithms} 
could get away with the nonlinear update due to 
eigenvector estimation. 
\sa{The $\mathcal{AB}$ and  $\mathcal{AB}$m\footnote{\rev{To present $\mathcal{AB}$ and $\mathcal{AB}$m in a unified manner, we use an adapt-then-combine update for $\bm{y}_{i}$ in $\mathcal{AB}$m similar to $\mathcal{AB}$~\cite{xin2018linear}. In~\cite{xin2019distributed}, it is mentioned that the $\mathcal{AB}$m method also works with this update; but, the originally stated version of $\mathcal{AB}$m uses an combine-then-adapt update: ${\bm{y}_{i}}(k+1) = \sum_{j\in{\cN_i^{in}}} b_{ij} \bm{y}_{j}(k) + \nabla f_{j}(\bm{x}_{j}(k+1))-\nabla f_{j}(\bm{x}_{j}(k))$ --we also considered the original version for our numerical tests.}} methods use} both row-stochastic and column-stochastic weights simultaneously. 
{At each iteration $k\geq 0$, 
they update two variables $\bm{x}_{i}(k),\bm{y}_{i}(k)\in{\reals^p}$ 
for each agent $i\in\cV$:}
{
\begin{align*}
{\bm{x}_{i}}(k+1)& = \sum\limits_{j\in\sa{\cN_i^{in}}} r_{ij}  \bm{x}_{j}(k) - \alpha \bm{y}_{i}(k)\sa{+\beta \left(\bm{x}_{i}(k)-\bm{x}_{i}(k-1)\right),}  \\
{\bm{y}_{i}}(k+1)& = \sum\limits_{j\in\sa{\cN_i^{in}}} b_{ij} (\bm{y}_{j}(k) + \nabla f_{j}(\bm{x}_{j}(k+1))-\nabla f_{j}(\bm{x}_{j}(k))),
\end{align*}}%
where $\alpha > 0$ is the step-size, \sa{$\beta\geq 0$ is the momentum parameter}, $\sa{\overline{R}}=[r_{ij}]\in \mathbb{R}^{n\times n}$ {and $\sa{\overline{B}}=\left[ b_{ij}\right] \in \mathbb{R}^{n\times n}$ denote the row-stochastic and column-stochastic weights, respectively, compatible with $\cG$}. {For the  $\mathcal{AB}$ method. \sa{the momentum parameter} $\beta=0$, and the iterate sequence, initialized with an arbitrary $\bm{x}_{i}(0)$ and $\bm{y}_{i}(0)=\nabla f_{i}(\bm{x}_{i}(0))$ for each $i\in\cV$, 
converges 
\rev{with a linear rate}} to the optimal solution under Assumptions~\ref{assu1}-\ref{assu3}.  
{
\sa{
\fin{Setting $\beta>0$,}}  $\mathcal{AB}$m~\cite{xin2019distributed} combines the gradient tracking with a momentum term and can deal with} nonuniform step-sizes, \sa{i.e., each agent-$i$ can pick $\alpha_i$ and $\beta_i$.}
 
{Push-Pull, proposed in \cite{pu2020push},   
is related to  $\mathcal{AB}$, it is only different in its $\bm{x}_i(k+1)$ update:
\begin{align*}
\bm{x}_{i}(k+1) = \sum\limits_{j\in\sa{\cN_i^{in}}} r_{ij} \big(\bm{x}_{j}(k) - \alpha \sa{\bm{y}_{j}(k)} \big),
\end{align*}
while $\bm{y}_i(k+1)$ update is the same with  $\mathcal{AB}$. 
 $\mathcal{AB}$ approach is based on the Combine-And-Adapt based scheme; on the other hand, Push-Pull method 
can be considered as an Adapt-Then-Combine based approach --for more details see~\cite{sayed2014adaptation}.} 

\subsubsection{Xi-row}
{The method proposed in \cite{xi2018linear}, which we call it as Xi-row in this paper, can solve \eqref{eqz1} over directed networks with a linear convergence rate using a 
\fin{constant step-size uniform across the agents.} Similar to our FRSD method, it 
only employs row-stochastic weights. Each agent $i\in\cV$ updates three variables $\bm{x}_{i}(k),\bm{y}_{i}(k),\bm{v}_{i}(k)\in\reals^p$ 
as follows:}
\begin{align*}
\bm{x}_{i}(k+1)& = \sum\limits_{j\in\sa{\cN_i^{in}}} r_{ij}  \bm{x}_{j}(k) - \alpha \bm{y}_{i}(k), \\
\bm{v}_{i}(k+1)& = \sum\limits_{j\in\sa{\cN_i^{in}}} r_{ij}  \bm{v}_{j}(k),\\
\bm{y}_{i}(k+1)& = \sum\limits_{j\in\sa{\cN_i^{in}}} r_{ij} \bm{y}_{i}(k)+ \dfrac{ \nabla f_{i}(\bm{x}_{i}(k+1))}{[\bm{v}_{i}(k+1)]_{i}}-\dfrac{\nabla f_{i}(\bm{x}_{i}(k))}{[\bm{v}_{i}(k)]_{i}},
\end{align*}
where $\alpha >0$ is the step-size and $\sa{\overline{R}}=\left[ r_{ij}\right] \in \mathbb{R}^{n\times n}$ is \sa{a row-stochastic 
matrix} {compatible with $\cG$}. The Xi-row iterates are initialized with arbitrary $\bm{x}_{i}(0)$, $\bm{v}_{i}(0)=\bm{e}_{i}$ and $\bm{y}_{i}(0)=\nabla f_{i}(\bm{x}_{i}(0))$ for each $i\in\cV$. \sa{A variant of the Xi-row method, FROST~\cite{xin2019frost} extends Xi-row to handle nonuniform step-sizes}.
\subsubsection{ $\mathcal{ABN}$/FROZEN/D-DNGT}  
{$\mathcal{ABN}$ and FROZEN, proposed in \cite{xin2019distributedNEST}, extend $\mathcal{AB}$ and Xi-row to incorporate Nesterov’s momentum term. Similar to $\mathcal{AB}$, $\mathcal{ABN}$ uses both row-stochastic and column-stochastic weights, 
\fin{while FROZEN only requires row-stochastic weights.} At each iteration $k\geq 0$,  $\mathcal{ABN}$ updates three variables $\bm{x}_{i}(k),\bm{y}_{i}(k), \bm{s}_{i}(k)\in{\reals^p}$ for each agent $i\in\cV$:
\begin{subequations}
\begin{align}
\bm{s}_{i}(k+1)& = \sum\limits_{j\in\sa{\cN_i^{in}}} r_{ij}  \bm{x}_{j}(k) - \alpha \bm{y}_{i}(k) \label{eq:ABN-s}\\
\bm{x}_{i}(k+1)&=\bm{s}_{i}(k+1)+\beta \left(\bm{s}_{i}(k+1)-\bm{s}_{i}(k)\right), \label{eq:ABN-x}\\
\bm{y}_{i}(k+1)& = \sum\limits_{j\in\sa{\cN_i^{in}}} b_{ij} \bm{y}_{j}(k) + \nabla f_{j}(\bm{x}_{j}(k+1))-\nabla f_{j}(\bm{x}_{j}(k)),\nonumber
\end{align}
\end{subequations}
\fin{and} FROZEN updates 
\sa{ $\bm{x}_{i}(k),\bm{y}_{i}(k),\bm{s}_{i}(k)\in{\reals^p}$, and $\bm{v}_{i}(k)\in\reals^n$, such that for each agent $i\in\cV$, $\bm{s}_{i}(k+1)$ and $\bm{x}_{i}(k+1)$ are updated according to \eqref{eq:ABN-s} and \eqref{eq:ABN-x}, respectively, \fin{$\bm{v}_{i}(k+1 )$ and $\bm{y}_{i}(k+1)$ are} updated according to}
\begin{align*}
\bm{v}_{i}(k+1)& = \sum\limits_{j\in\sa{\cN_i^{in}}} r_{ij}  \bm{v}_{j}(k),\\
\bm{y}_{i}(k+1)& = \sum\limits_{j\in\sa{\cN_i^{in}}} r_{ij} \bm{y}_{i}(k)+ \dfrac{ \nabla f_{i}(\bm{x}_{i}(k+1))}{[\bm{v}_{i}(k+1)]_{i}}-\dfrac{\nabla f_{i}(\bm{x}_{i}(k))}{[\bm{v}_{i}(k)]_{i}},
\end{align*}
where $\alpha > 0$ is the step-size, $\beta\geq 0$ is the momentum parameter in both methods, $\overline{R}=\left[ r_{ij}\right]\in \mathbb{R}^{n\times n}$ and $\overline{B}=\left[ b_{ij}\right] \in \mathbb{R}^{n\times n}$ denote row-stochastic and column-stochastic weight matrices. For both methods, $\bm{x}_{i}(0)$ and $\bm{s}_{i}(0)$ are arbitrary, and the other variables are initialized as $\bm{v}_{i}(0)=\bm{e}_{i}$, $\bm{y}_{i}(0)=\nabla f_{i}(\bm{x}_{i}(0))$ for each $i\in\cV$.} 

\sa{Another momentum-based method is D-DNGT, proposed in  \cite{lu2020nesterov}. D-DNGT is related to both FROZEN and $\mathcal{AB}$m:
\rev{
\begin{align*}
\bm{s}_{i}(k+1)& = \sum\limits_{j\in{\cN_i^{in}}} r_{ij}  \bm{x}_{j}(k)+\beta \left(\bm{s}_{i}(k)-\bm{s}_{i}(k-1)\right) - \alpha \bm{y}_{i}(k)\\
\bm{x}_{i}(k+1)&=\bm{s}_{i}(k+1)+\beta \left(\bm{s}_{i}(k+1)-\bm{s}_{i}(k)\right),  \\
\bm{v}_{i}(k+1)& = \sum\limits_{j\in{\cN_i^{in}}} r_{ij}  \bm{v}_{j}(k),\\
\bm{y}_{i}(k+1)& = \sum\limits_{j\in{\cN_i^{in}}} r_{ij} \bm{y}_{i}(k)+ \dfrac{ \nabla f_{i}(\bm{x}_{i}(k+1))}{[\bm{v}_{i}(k+1)]_{i}}-\dfrac{\nabla f_{i}(\bm{x}_{i}(k))}{[\bm{v}_{i}(k)]_{i}},
\end{align*}}%
where the initial variables are \rev{set as in}
the FROZEN method.}%
\subsubsection{Comparison of different dynamics} \sa{Relaxing the assumption that all nodes need to know their out-degree (a requirement for column-stochastic methods) comes at a cost. Indeed, \fin{to be able to implement FRSD or any other row-stochastic method, e.g., \rev{Xi-row}, FROZEN, D-DNGT, one needs} each agent $i\in\cV$ to know the total number of agents in the network as well as its own rank in order to construct $\bm{v}_i\in\reals^n$. Finally, while push-sum based methods only require an extra scalar to be stored for ``debiasing," row-stochastic methods, including FRSD, require storing an $n$-dimensional vector, which grows linearly with the number of agents in the network.}

\sa{Next, to get a better insight about \rev{the} FRSD update rule, we write $\bm{\mathrm{x}}(k+2)$ in a recursive manner for FRSD and compare it against $\mathcal{AB}$ and \sa{$\mathcal{AB}$m} methods, which use both row- and column-stochastic matrices, and also with Xi-row which uses row-stochastic weights.}\\[3mm]
\textbf{ $\mathcal{AB}$/\sa{ $\mathcal{AB}${\rm m}}:} {For $k\geq 0$,}
{
\begin{flalign*}
\bm{\mathrm{x}}(k+2)= &(R+B)\bm{\mathrm{x}}(k+1) -{BR {\bf x}(k)} - \alpha B {\big(\nabla f(\bm{\mathrm{x}}(k+1))-\nabla f(\bm{\mathrm{x}}(k))\big)}\\
 &\sa{+\beta\big(\bx(k+1)-\bx(k)\big)-\beta B\big(\bx(k)-\bx(k-1)\big).}
 \vspace*{-3mm}
\end{flalign*}}%
\sa{$\mathcal{AB}$/\sa{ $\mathcal{AB}$m} recursion using column-stochastic weights for the gradient tracking term, $B\big(\nabla f(\bm{\mathrm{x}}(k+1))-\nabla f(\bm{\mathrm{x}}(k))\big)$, is fundamentally different 
\fin{from} the row-stochastic methods.}\\[0.5mm]

\noindent\textbf{Xi-row:} {For $k\geq 0$,}%
{
\begin{align*}
\bm{\mathrm{x}}(k+2)= &2R\bm{\mathrm{x}}(k+1)-R^{2} \bm{\mathrm{x}}(k)\\
 &- \alpha  \big(\widetilde{V}^{-1}(k+1)\nabla f(\bm{\mathrm{x}}(k+1))-\widetilde{V}^{-1}(k)\nabla f(\bm{\mathrm{x}}(k))\big),
\end{align*}}%
{where for $k\geq 0$, $\widetilde{V}(k)\triangleq\diag(V(k))$ and $V(k)\triangleq\left[\bm{v}_{1}(k),...,\bm{v}_{n}(k)\right]^\top \in \mathbb{R}^{n\times n}$ --see Definition~\ref{def:Vk}. \sa{Except for the difference in how gradient tracking is handled, $\mathcal{AB}$ and Xi-row are closely related in terms of consensus dynamics, i.e., say $R=B=W$ for some doubly-stochastic mixing matrix $W$ compatible with $\cG$, then both $\mathcal{AB}$ ($\beta=0$) and Xi-row updates take the same form: $\bx(k+2)=2W\bx(k)-W^2\bx(k)+G(k)$, where $G(k)$ is the term related to gradient tracking. In contrast, FRSD has different consensus dynamics.}}\\[3mm]
\textbf{FRSD:} {For $k\geq 0$,}%
{
\begin{align*}
\bm{\mathrm{x}}(k+2)=&\big((1+\alpha \beta)R +(1-\alpha \beta) I_{n\sa{p}}\big)\bm{\mathrm{x}}(k+1)-R \bm{\mathrm{x}}(k) \\
&- \alpha   \big(\widetilde{V}^{-1}(k+1)\nabla f(\bm{\mathrm{x}}(k+1))-\widetilde{V}^{-1}(k)\nabla f(\bm{\mathrm{x}}(k))\big).
\end{align*}}%
\sa{For FRSD, we can set $\beta=c/\alpha$ for any $c\in(0,1)$, and using this choice, FRSD updates reduces to}%
\sa{
\begin{align*}
\bm{\mathrm{x}}(k+2)= &2R\bm{\mathrm{x}}(k+1)-R^2 \bm{\mathrm{x}}(k)+\Delta_c(k) \\
&-\alpha  \big(\widetilde{V}^{-1}(k+1)\nabla f(\bm{\mathrm{x}}(k+1))-\widetilde{V}^{-1}(k)\nabla f(\bm{\mathrm{x}}(k))\big),
\end{align*}}%
\sa{where $\Delta_c(k)\triangleq (1-c)(I-R)\bm{x}(k+1)-R(I-R)\bm{x}(k)$ is the difference term between FRSD and the other two recursion rules. \fin{Indeed,} for $R=W$ as above, FRSD recursion takes the form: $\bx(k+2)=2W\bx(k)-W^2\bx(k)+G(k)+\Delta_c(k)$, where $G(k)$ is the FRSD gradient tracking term same with Xi-row.}

\sa{Clearly, for arbitrary $R$ and $B$ \rev{that are} compatible with a non-trivial directed graph $\cG$, $\mathcal{AB}$, $\mathcal{AB}$m, Xi-row and FRSD \rev{are not the same, they generate distinct iterate sequences.}}
\subsubsection{Implicit Gradient Tracking}
\sa{It is important to emphasize that the gradient tracking component $$\tilde V^{-1}(k+1) \nabla f(\bx(k+1)) - \tilde V^{-1}(k) \nabla f(\bx(k))$$ indeed appears in the FRSD recursion \fin{when} written using only in $\bx$ variables. That is why we are able to obtain the linear convergence for the FRSD iterate sequence. However, it is also worth mentioning that the implementation of the FRSD algorithm in practice does not require gradient tracking for computations at the node level, unlike the other methods in the literature -- as all other methods \emph{explicitly} use the gradient tracking, e.g., Xi-row,  $\mathcal{AB}$, Push-Pull, Push-DIGing. Using an
\emph{implicit} gradient tracking mechanism, FRSD does not need to store the previous iterates for neither $\bm{x}_i$ nor $\bm{y}_i$ variables, i.e., each agent-$i$ needs to store only $\boldsymbol{x}_i(k)$, $\boldsymbol{y}_i(k-1)$ and $\boldsymbol{v}_i(k)$ to be able to update these iterates to $\boldsymbol{x}_i(k+1)$, $\boldsymbol{y}_i(k)$ and $\boldsymbol{v}_i(k+1)$; hence, to implement FRSD, agent-$i$ needs to store a $2p+n$-dimensional vector. Moreover, the novel $\by$-update (see \texttt{line~\ref{eqz4c}} of Algorithm~\ref{algori1}), leading to
\emph{implicit} gradient-tracking, also result in a significant reduction in communication overhead; indeed, in order to implement FRSD, the agent-$j$ needs to only broadcast $\bx_j(k)\in\reals^p$ and $\bm{v}_j(k)\in\reals^n$; 
thus, $j\in\cV$ needs to only transmit $n+p$-dimensional vector 
-- note that for small networks, i.e., when $n$ is small, this is a significant reduction compared to $2p+1$ required by both Push-DIGing and $\mathcal{AB}$/Push Pull. Furthermore, comparing FRSD with the other row-stochastic methods \rev{Xi-row}, FROZEN, and D-DNGT communication requirement decreases from $2p+n$ to $p+n$. Therefore, for solving high-dimensional problems over small-to-medium size networks, i.e., $p\gg n$, FRSD becomes the method of choice -- see Table~\ref{table:comparison}.}
\begin{table}
\centering
\scriptsize
\begin{tabular}{ | c || c | c | c |c | c |  }
 \hline
Methods & Variables & Memory & Comm. & Row S. & Col. S.\\
 \hline
  \hline
    $\mathcal{AB}$ & $\bm{x},\bm{x}^p,\bm{y}$ & $3p$  &  $2p$  &   \cmark  &   \cmark\\
  \hline 
  Push-Pull  & $\bm{x},\bm{x}^p,\bm{y}$ & $3p$   & $2p$ &   \cmark  &   \cmark\\
    \hline
 $\mathcal{ABN}$ & $\bm{x},\bm{x}^p,\bm{y},\bm{s}$ &  $4p$ &  $2p$  &   \cmark  &   \cmark\\
 \hline
  $\mathcal{AB}$m & $\bm{x},\bm{x}^p,\bm{y}$ & $3p$ &  $2p$  &   \cmark  &   \cmark \\
  \hline
 Push-Ding   & $\bm{x},\bm{x}^p,\bm{y}, v$ &  $3p+1$    &   $2p+1$   &   \xmark &  \cmark \\
  \hline
 \rev{Xi-row}     & $\bm{x},\bm{x}^p,\bm{y},\bm{v}$ &  $3p+n$    &  $2p+n$  &  \cmark   &    \xmark         \\
  \hline
 D-DNGT     & \rev{$\bm{x},\bm{x}^p,\bm{y},\bm{s}, \bm{s}^p,\bm{v}$} &  $\rev{5}p+n$    &  $2p+n$  &  \cmark   &    \xmark         \\
  \hline
FROZEN      & $\bm{x},\bm{x}^p,\bm{y},\bm{s},\bm{v}$ &  $4p+n$    &  $2p+n$  &  \cmark   &    \xmark         \\
  \hline
FRSD        & $\bm{x},\bm{y},\bm{v}$ &  $2p+n$    &  $p+n$  &  \cmark   &      \xmark       \\
 \hline
 FRSD-CS        & $\bm{x},\bm{y},\bm{v}$ &  $2p+n$    &  $p+n$  &  \cmark   &      \xmark       \\
 \hline
\end{tabular}
 \caption{\sa{Comparison of methods for directed graphs in terms of storage and communication requirements (\rev{The ``Variables" column lists the variables 
 stored at each node 
 to carry out the computation} -- $\bm{x}^p$ denotes the previous iterate), and whether 
 \rev{they use} row- and/or column-stochastic mixing matrices.}}
 \vspace*{-2mm}
\label{table:comparison}
\end{table}%
\subsection{Primal-Dual Algorithm Motivation}
\label{sec:motivation}
\sa{In a similar spirit with the discussion in~\cite{xu2020accelerated}, we can argue that FRSD is closely related to the primal-dual algorithms for saddle point problems studied within the optimization literature. More precisely, consider an equivalent formulation of the main problem in~\eqref{eqz1}: $$\min_{\{\bm{x}_i\}_{i\in\cV}}\{\sum_{i\in\cV}f_i(\bm{x}_i):\ \bx\triangleq[\bm{x}_i]_{i\in\cV}\in\cC\},$$ where $\cC\triangleq\{\bx:\ \bm{x}_1=\bm{x}_2=\ldots=\bm{x}_n\}$. Using the Fenchel duality, this problem can be written equivalently as
\begin{align}
\label{eq:SP_formulation}
    \min_{\bx}\max_{\by\in\cC^\perp}\sum_{i\in\cV}f_i(\bm{x}_i)+\bm{y}_i^\top\bm{x_i},
\end{align} where $\cC^\perp$ is the orthogonal complement of the subspace $\cC$, i.e., $\by\in\cC^\perp$ if and only if $\sum_{i\in\cV}\bm{y}_i=\mathbf{0}_p$. After swithcing the roles of $\bx$ and $\by$ through multiplying \eqref{eq:SP_formulation} with $-1$, if one naively implements a variant\footnote{The variant we discussed 
\fin{here} is proposed in \cite{hamedani2021decentralized} 
-- see Eq~(5) therein.} of the primal-dual algorithm proposed by Chambolle \& Pock~\cite{chambolle2016ergodic}, we get 
{
\begin{subequations}
\label{eq:CP}
\begin{align}
    \label{eq:CP-x}
    \bm{m}_i(k)&\gets(1+\theta)\bm{y}_i(k)-\theta\bm{y}_i(k-1),\quad i\in\cV,\\
    \bm{x}_i(k+1)&\gets\bm{x}_i(k)-\alpha\Big(\grad f_i(\bm{x}_i(k))+\bm{m}_i(k)\Big),\quad i\in\cV,\\
    \by(k+1)&\gets\Pi_{\cC^\perp}\Big(\by(k)+\beta\bx(k+1)\Big),\label{eq:CP-projection}
\end{align}
\end{subequations}}%
where $\alpha,\beta>0$ are primal and dual step sizes, respectively, and $\theta\geq 0$ is the momentum parameter -- here, $\Pi_{\cC^\perp}(\cdot)$ denotes the Euclidean projection onto $\cC^\perp$, i.e., for simplicity of the notation, assume $p=1$; then, for any $\by$, $\Pi_{C^\perp}(\by)=\by-\mathbf{1}\mathbf{1}^\top\by/n$. 
This explicit form of $\Pi_{\cC^\perp}(\cdot)$ and $\by(0)=\mathbf{0}_n$ initialization imply that \eqref{eq:CP-projection} is equivalent to $\by(k+1)\gets \by(k)+\Pi_{\cC^\perp}(\bx(k+1))$. This method is known to converge 
\rev{with a linear rate} for appropriately chosen $\alpha,\beta,\theta>0$; however, due to $\Pi_{\cC^\perp}(\cdot)$ in~\eqref{eq:CP-projection}, this algorithm is not 
\fin{decentralized}.} 

\sa{To motivate FRSD through \rev{an} analogy, suppose the underlying network $\cG=(\cV,\cE)$ is undirected and we are given a doubly stochastic mixing matrix $W\in\reals^{n\times n}$ such that $W=W^\top$ and $W_{ij}>0$ if and only if $(i,j)\in\cE$. Let $W_\infty\triangleq\lim_{k\to\infty}W^k=\mathbf{1}_n\mathbf{1}_n^\top/n$. Note that $\Pi_{\cC^\perp}(\by)=(I-W_\infty)\by$. For decentralized implementation, consider approximating $\Pi_{\cC^\perp}(\cdot)$ with $(I-W)(\cdot)$. Thus, we will approximate the update in~\eqref{eq:CP-projection} with $\by(k+1)\gets \by(k)+(I-W)\bx(k+1)$, and with this approximation, the recursion in \eqref{eq:CP} takes the following form:
{
\begin{subequations}
\label{eq:CP-approx}
\begin{align}
    \by(k)&\gets\by(k-1)+\beta (I-W)\bx(k),\label{eq:CP-projection-approx}\\
    \bx(k+1)&\gets\bx(k)-\alpha\Big(\grad f(\bx(k))+\by(k)+\theta \beta(I-W)\bx(k)\Big),\label{eq:CP-approx-x}
\end{align}
\end{subequations}}%
where $\grad f(\bx)=[\grad f_1(\bm{x}_1)^\top, \ldots, \grad f_n(\bm{x}_n)^\top]^\top$.  
\fin{Hence,} adding and subtracting $W\bx(k)$ to \eqref{eq:CP-approx-x}, we get
{
\begin{align}
    \bx(k+1)\gets &W\bx(k)-\alpha\Big(\grad f(\bx(k))+\by(k)\Big)\nonumber\\
    &\mbox{ }+(1-\alpha\beta\theta)(I-W)\bx(k). \label{eq:CP-approx-x-2}
\end{align}}%
Therefore, given the primal, dual step sizes $\alpha,\beta>0$ such that $\alpha\beta<1$, setting the momentum parameter $\theta=(\alpha\beta)^{-1}>1$, the last term in~\eqref{eq:CP-approx-x-2} disappears as $1-\alpha\beta\theta=0$, the primal-dual algorithm with approximate averaging in \eqref{eq:CP-approx} reduces to
{
\begin{subequations}
\label{eq:CP-approx-FRSD}
\begin{align}
    \by(k)&\gets\by(k-1)+\beta \Big(\bx(k)-W\bx(k)\Big),\label{eq:CP-projection-approx}\\
    \bx(k+1)&\gets W\bx(k)-\alpha\Big(\grad f(\bx(k))+\by(k)\Big). \label{eq:CP-approx-x-FRSD}
\end{align}
\end{subequations}}%
It can be clearly seen that {the} FRSD algorithm for directed networks can be obtained from \eqref{eq:CP-approx-FRSD} by replacing \rev{the} doubly stochastic mixing matrix $W$ with a row stochastic $R$ and by ``debiasing" through introducing $\{\bv(k)\}_k\subset\reals^n$ sequence since $R_\infty=\lim_{k\to \infty}R^k=\mathbf{1}_n \bm{\pi}^\top$ for some $\bm{\pi}\in\reals^n$ such that $\bm{\pi}>\mathbf{0}_n$ and $\mathbf{1}_n^\top \bm{\pi}=1$.}
\section{Main Results}
In this section, we will show that {the iterate sequence generated by the algorithm FRSD as stated in \eqref{eqqz6} converges} to the optimal solution $\bm{\mathrm{x}}^{*}$ 
\rev{with a linear rate. This result only applies to \fin{the FRSD method}, theoretical analysis of FRSD-CS is not considered in this paper.} 
\begin{remark}
\label{rem:muL-f}
{Assumptions~\ref{assu2} and~\ref{assu3} imply that $f$ is $L$-smooth, i.e., $\norm{\grad f(\bm{\mathrm{x}})-\grad f(\bm{\mathrm{x}}')}\leq L \norm{\bm{\mathrm{x}}-\bm{\mathrm{x}}'}$, and $\mu$-strongly convex.} 
\end{remark}
\begin{remark}
\label{rem:stationary2}
{Since $\sa{\overline{R}}$ is row-stochastic, \sa{the} spectral radius of $\sa{\overline{R}}$ is 1, {$\rho(\sa{\overline{R}})=1$}; thus, $\lim_{k\to\infty}\sa{\overline{R}}^k$ exists.} {In particular, since $\sa{\overline{R}}$ corresponds to an ergodic Markov chain, 
we get $\lim_{k\rightarrow \infty} \sa{\overline{R}}^{k}=\bm{1}_{n} \bm{\pi}^\top$ -- see Remark~\ref{rem:stationary1}.} 
\end{remark}
\begin{defn}
\label{def:Vinf}
Define $V_{\infty}\triangleq\lim_{k\rightarrow \infty} V(k)$ and $\widetilde{V}_\infty\sa{\triangleq}\diag(V_{\infty})$. Furthermore, let $v\triangleq\sup\limits_{{k\geq 0}}\Vert V(k) \Vert$ and $\tilde{v}\triangleq\sup\limits_{{k\geq 0}}\Vert \widetilde{V}^{-1}(k) \Vert$.
\end{defn}
\begin{remark}
\label{rem:Vinf-bounds}
\sa{Since $\sa{\overline{V}}(0)=I_{n}$, $\lim_{k\rightarrow \infty}{\overline{R}}^{k}=\bm{1}_{n} \bm{\pi}^\top$, we get $V_{\infty}=(\bm{1}_n\bm{\pi}^\top)\otimes I_p$ and $\widetilde{V}_\infty=\diag({V_{\infty}})=\sa{\bm{\pi}\otimes \bm{1}_{p}}$. Note $\{V(k)\}_k$ is convergent; hence, it is bounded, implying that $v\in\reals$ exists. Furthermore, Remark~\ref{rem:stationary1} shows that $\bm{\pi}>0$; therefore, $\tilde v\in\reals$ also exists.}
\end{remark}
\begin{remark}
\label{rem:Vinf}
{Since \sa{$\overline{R}$} corresponds to an Ergodic Markov chain, Remarks~\ref{rem:stationary1} and~\ref{rem:stationary2} imply that $V_\infty R=R V_\infty=V_\infty V_\infty=V_\infty$.} \sa{Moreover, the spectral radius \fin{of $R-V_\infty$} \rev{satisfies} $\rho(R-V_\infty)=\rho(\overline{R}-\bm{1}_n\bm{\pi}^\top)<1$.}
\end{remark}

Next, we define some auxiliary sequences that will be used 
\sa{in} the analysis. For $k\geq 0$, let $\hat{\bm{\mathrm{x}}}(k)\sa{\triangleq} V_{\infty}\bm{\mathrm{x}}(k)=\sa{(\bm{1}_{n}\otimes I_p)(\bm{\pi}^\top \otimes I_p) \bm{\mathrm{x}}(k)=(\bm{1}_{n}\otimes I_p)\hat{\bm{x}}(k)\in\reals^{np}}$, where ${\hat{\bm{x}}}(k)\sa{\triangleq}\sa{(\bm{\pi}^\top \otimes I_p)\bm{\mathrm{x}}(k){\in\reals^{{p}}}}$, \sa{i.e., $\hat{\bx}(k)=\bm{1}_n\otimes\hat{\bm{x}}(k)$.}
Let $\bm{\mathrm{x}}^{*}\sa{\triangleq\bm{1}_{n}\otimes}\bm{x}^{*}$ where
$\bm{x}^{*}\in\reals^{\sa{p}}$ is the unique optimal solution to \eqref{eqz1}. Thus, 
\sa{Definition~\ref{def:f}} implies that 
$\nabla f(\hat{\bm{\mathrm{x}}}(k))=\left[\nabla f_{1}(\hat{\bm{x}}(k))\sa{^\top},\ldots,\nabla f_{n}(\hat{\bm{x}}(k))\sa{^\top} \right]^\top{\in\reals^{{\sa{np}}}}$ and $\nabla f(\bm{\mathrm{x}}^*)=\left[\nabla f_{1}(\bm{x}^{*})\sa{^\top},\ldots,\nabla f_{n}(\bm{x}^{*})\sa{^\top} \right]^\top{\in\reals^{\sa{np}}}$.
\begin{remark}
\label{rem:optimality}
{From the optimality condition for \eqref{eqz1}, $\sa{(\bm{1}_{n}^\top\otimes I_p)}\grad f(\bm{\mathrm{x}}^*)=0$.}
\end{remark}

The structure of our proof was inspired by \cite{xi2018linear} and \cite{qu2017harnessing}.
In particular, we construct a linear system of inequalities {and use the deterministic version of the celebrated supermartingale convergence theorem~\cite{Robbins71} to prove the convergence results. We were able to show that FRSD iterates converge to the optimal consensus solution with a linear rate as in \cite{pu2020push, xin2018linear, lu2020nesterov}.} 

{In the rest of this section, we 
{establish the linear convergence; but, first, we state some preliminary results which will be used later.} 
\begin{defn}
\label{def:C}
Given $\alpha,\beta>0$ such that $\alpha\beta\in(0,1)$, let $\sa{C=\overline{C}\otimes I_p}$ and $\sa{\overline{C}}\triangleq (1-\alpha\beta)I_{n}+\alpha\beta \sa{\overline{R}}$, where $\sa{\overline{R}}=[r_{ij}]\in{\reals^{n\times n}}$ is the row-stochastic matrix as given in \eqref{eqz5}.
\end{defn}

\sa{The Markov chain associated with ${\overline{C}}$} is the lazy version of the Markov chain corresponding to $ \sa{\overline{R}}$; thus, it has the same stationary distribution, i.e., 
$\lim_{k\rightarrow \infty} \sa{\overline{C}}^{k}= \lim_{k\rightarrow \infty}  \sa{\overline{R}}^{k}=\bm{1}_{n} \bm{\pi}^\top$. Next, we state two technical results 
that will help us derive our main result.} 
\begin{lemma}\label{lemma1}
{Given 
$R$ and $C$ as defined above, there exist 
vector norms $\norm{\cdot}_R$, $\norm{\cdot}_C$ such that {$\norm{\cdot}\leq\norm{\cdot}_R$ and $\norm{\cdot}\leq\norm{\cdot}_C$}, and there exist constants $\sigma_R, \sigma_C\in (0,1)$ such that
\begin{align}
\Vert R\bm{\mathrm{x}} - \hat{ \bm{\mathrm{x}}} \Vert_{R} \leq \sigma_{R} \Vert \bm{\mathrm{x}} - \hat{ \bm{\mathrm{x}}} \Vert_{R} , \label{eqz7}\\
\Vert C\bm{\mathrm{x}} - \hat{ \bm{\mathrm{x}}} \Vert_{C}  \leq \sigma_{C} \Vert \bm{\mathrm{x}} - \hat{ \bm{\mathrm{x}}} \Vert_{C}, \label{eqz8}
\end{align}}%
for any $\bm{\mathrm{x}}{\in\reals^n}$ and $\hat{\bm{\mathrm{x}}}=V_\infty\bm{\mathrm{x}}$.
\end{lemma}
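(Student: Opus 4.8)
The plan is to reduce both inequalities to a single classical fact: a matrix $M$ with spectral radius $\rho(M)<1$ is a strict contraction in some vector norm. First I would record the governing algebraic identity. Using $R V_\infty = V_\infty R = V_\infty V_\infty = V_\infty$ from Remark~\ref{rem:Vinf} together with $\hat{\bm{\mathrm{x}}}=V_\infty\bm{\mathrm{x}}$, one checks $R\hat{\bm{\mathrm{x}}} = V_\infty\hat{\bm{\mathrm{x}}} = \hat{\bm{\mathrm{x}}}$, so that
\begin{equation*}
R\bm{\mathrm{x}} - \hat{\bm{\mathrm{x}}} = R\bm{\mathrm{x}} - R\hat{\bm{\mathrm{x}}} - V_\infty\bm{\mathrm{x}} + V_\infty\hat{\bm{\mathrm{x}}} = (R - V_\infty)(\bm{\mathrm{x}} - \hat{\bm{\mathrm{x}}}).
\end{equation*}
The analogous identity $C\bm{\mathrm{x}} - \hat{\bm{\mathrm{x}}} = (C - V_\infty)(\bm{\mathrm{x}} - \hat{\bm{\mathrm{x}}})$ follows once I verify $C V_\infty = V_\infty C = V_\infty$, which is immediate from $C = (1-\alpha\beta)I + \alpha\beta R$ and $R V_\infty = V_\infty$. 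Thus everything reduces to controlling the spectral radii of $R - V_\infty$ and $C - V_\infty$.

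Second, I would establish that both spectral radii are strictly below one. For $R - V_\infty$ this is precisely Remark~\ref{rem:Vinf}, namely $\rho(R - V_\infty) = \rho(\overline{R} - \bm{1}_n\bm{\pi}^\top) < 1$, the Kronecker factor $I_p$ leaving the eigenvalues unchanged. For $C - V_\infty$ I would write $C - V_\infty = (1-\alpha\beta)(I - V_\infty) + \alpha\beta(R - V_\infty)$ and use ergodicity of $\overline{R}$, which forces every nonunit eigenvalue $\lambda$ of $\overline{R}$ to satisfy $|\lambda|<1$. Each nonunit eigenvalue of $\overline{C}$ then equals $(1-\alpha\beta) + \alpha\beta\lambda$, and since $\alpha\beta\in(0,1)$,
\begin{equation*}
|(1-\alpha\beta) + \alpha\beta\lambda| \le (1-\alpha\beta) + \alpha\beta|\lambda| < 1,
\end{equation*}
so $\rho(C - V_\infty) < 1$ as well.

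Third, I would invoke the standard matrix-analysis result that for any square matrix $M$ and any $\epsilon>0$ there is a vector norm whose induced operator norm satisfies $\Vert M\Vert_* \le \rho(M) + \epsilon$. Applying this to $M = R - V_\infty$ with $\epsilon$ small enough that $\sigma_R := \rho(R - V_\infty) + \epsilon < 1$, and likewise to $C - V_\infty$ to obtain $\sigma_C \in(0,1)$, the contraction estimates \eqref{eqz7}--\eqref{eqz8} follow by combining the identities of the first step with these induced-norm bounds. The domination requirement $\norm{\cdot}\le\norm{\cdot}_R$ (and the analogue for $C$) is then handled by rescaling: since all norms on a finite-dimensional space are equivalent, there is $c>0$ with $c\norm{\cdot}\le\Vert\cdot\Vert_*$; replacing $\Vert\cdot\Vert_*$ by $\Vert\cdot\Vert_*/c$ preserves every induced operator norm (hence $\sigma_R$ and $\sigma_C$ are unchanged) while guaranteeing that the resulting norm dominates the Euclidean one.

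I expect the main obstacle to be the spectral bound for $C - V_\infty$ — specifically, relating the eigenvalues of the lazy chain $\overline{C}$ to those of $\overline{R}$ and confirming that laziness together with $\alpha\beta<1$ keeps the nonunit eigenvalues strictly inside the unit disk. Once this is settled, the remaining steps are routine applications of the $\epsilon$-norm construction and norm equivalence.
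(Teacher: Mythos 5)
Your proposal is correct and takes essentially the same route as the paper: both arguments rest on $\rho(R-V_\infty)<1$ and $\rho(C-V_\infty)<1$, then invoke the $\epsilon$-norm construction of \cite[Lemma 5.6.10]{horn2012matrix} to get induced matrix norms strictly below $1$, and finally rescale the vector norms to dominate the Euclidean norm (scaling leaves induced norms unchanged). The only difference is that you spell out details the paper delegates to \cite[Lemma~2]{xi2018linear} and to a bare assertion --- namely the contraction identity $R\bm{\mathrm{x}}-\hat{\bm{\mathrm{x}}}=(R-V_\infty)(\bm{\mathrm{x}}-\hat{\bm{\mathrm{x}}})$ and the eigenvalue mapping $\lambda\mapsto(1-\alpha\beta)+\alpha\beta\lambda$ showing $\rho(C-V_\infty)<1$ for the lazy matrix $\overline{C}$ --- which is a harmless and indeed clarifying elaboration.
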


{Lemma~\ref{lemma1} directly follows from \eqref{eqz5} and Assumption~\ref{assu1} -- for the proof of \eqref{eqz7}, see  \cite[Lemma~2]{xi2018linear}, and \eqref{eqz8} can be shown similarly since $\lim_{k\rightarrow \infty}C^{k}= \lim_{k\rightarrow \infty} R^{k}= \sa{(\bm{1}_{n}\bm{\pi}^\top)\otimes I_p}$. Indeed, 
since $\rho(R-V_\infty)<1$ --see Remark~\ref{rem:Vinf},
\cite[Lemma 5.6.10]{horn2012matrix} implies that there exists invertible \sa{$S\in\reals^{n{p}\times n{p}}$} such that $\norm{\bx}_R\triangleq\norm{S\bx}_1$; moreover, the matrix norm $\vertiii{\cdot}$ induced by $\norm{\cdot}_R$ satisfies $\vertiii{R-V_\infty}\in(0,1)$. Finally, through properly scaling $\norm{\cdot}_R$, we immediately get $\norm{\cdot}\leq\norm{\cdot}_R$, which does not affect $\vertiii{\cdot}$ since $\vertiii{B}=\max\{\norm{B\bx}_R/\norm{\bx}_R:\ \bx\neq {\bf 0}\}$ for any \sa{$B\in\reals^{n{p}\times n{p}}$}. Same arguments can be used for showing \eqref{eqz8} as  we also have $\rho(C-V_\infty)<1$.}

\begin{remark}
\label{rem:sigma_R}
{Let $\vertiii{\cdot}$ represent the matrix norm induced by $\norm{\cdot}_R$. According to \cite[Lemma 5.6.10]{horn2012matrix}, the constant $\sigma_R\in(0,1)$ in Lemma~\ref{lemma1} has an explicit form, 
$\sigma_R=\vertiii{R-V_\infty}$.}
\end{remark}

First, we remark that all vector norms on a finite dimensional vector spaces are equivalent, i.e., there exist {$\kappa_1,\kappa_2,\kappa_3,\kappa_4>0$} such that
\begin{equation}
\label{eq:norm_equiv}
\begin{aligned}
\Vert \cdot \Vert_{R} & \leq {\kappa_1}\Vert \cdot \Vert_{C}, \quad&
\Vert \cdot \Vert_{C} & \leq {\kappa_2}\Vert \cdot \Vert_{R}, \\
\Vert \cdot \Vert_{R} & \leq {\kappa_3}\Vert \cdot \Vert, &
\Vert \cdot \Vert_{C} & \leq {\kappa_4}\Vert \cdot \Vert.
\end{aligned}
\end{equation}

{Similar to the results in~\cite{nedic2014distributed}, we also have $\Vert V(k)- V_{\infty}\Vert\leq \Lambda \lambda^k$ for some $0<\Lambda\in\reals$ and $\lambda\in(0,1)$. Below we analyze the dependence of $\lambda$ and $\Lambda$ on $R$.}
{
\begin{lemma}\label{lemma2}
Let $V(k)=R^k$ for $k\geq 0$ and $V_\infty=\lim_{k\to\infty}R^k$. 
Then, for $\kappa_3>0$ defined in~\eqref{eq:norm_equiv} and $\sigma_R\in(0,1)$ given in Remark~\ref{rem:sigma_R}, the following bound holds:
\begin{align}
\label{eq:Vk-Vinf-bound}
\sa{\Vert \overline{V}(k)- \bm{1}_n\bm{\pi}^\top\Vert=}\Vert V(k)- V_{\infty}\Vert  \leq \kappa_3 \sigma_R^{k},\quad \forall~k\geq 0.
\end{align}
\end{lemma}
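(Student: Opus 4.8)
The plan is to rewrite the matrix power $R^k$ in terms of the contraction $R-V_\infty$, exploit submultiplicativity of the matrix norm $\vertiii{\cdot}$ induced by $\norm{\cdot}_R$, and then transfer the resulting geometric decay to the spectral norm via the equivalence constants in \eqref{eq:norm_equiv}.

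First, I would prove the algebraic identity $R^k-V_\infty=(R-V_\infty)^k$ for all $k\ge 1$ by induction. The case $k=1$ is trivial. For the inductive step, expanding $(R^k-V_\infty)(R-V_\infty)=R^{k+1}-R^kV_\infty-V_\infty R+V_\infty^2$ and invoking the relations $RV_\infty=V_\infty R=V_\infty^2=V_\infty$ from Remark~\ref{rem:Vinf} (so that $R^kV_\infty=V_\infty$ as well, by iterating $RV_\infty=V_\infty$), the three trailing terms collapse to $-V_\infty$, giving $R^{k+1}-V_\infty$. The crucial structural facts here are that $V_\infty$ is idempotent and is absorbed by $R$ on both sides.

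With the identity in hand, let $\vertiii{\cdot}$ be the matrix norm induced by $\norm{\cdot}_R$. Submultiplicativity and Remark~\ref{rem:sigma_R} then yield $\vertiii{R^k-V_\infty}=\vertiii{(R-V_\infty)^k}\le\vertiii{R-V_\infty}^k=\sigma_R^k$ for $k\ge 1$. To return to the spectral norm, I would combine the two equivalences $\norm{\cdot}\le\norm{\cdot}_R$ and $\norm{\cdot}_R\le\kappa_3\norm{\cdot}$ to get, for any matrix $B$, $\norm{B}=\sup_{\bx\neq\mathbf{0}}\norm{B\bx}/\norm{\bx}\le\kappa_3\sup_{\bx\neq\mathbf{0}}\norm{B\bx}_R/\norm{\bx}_R=\kappa_3\vertiii{B}$. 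Applying this with $B=R^k-V_\infty$ gives $\norm{V(k)-V_\infty}=\norm{R^k-V_\infty}\le\kappa_3\sigma_R^k$ for $k\ge 1$; the claimed equality $\norm{\overline{V}(k)-\mathbf{1}_n\bm{\pi}^\top}=\norm{V(k)-V_\infty}$ then follows because $V(k)=\overline{V}(k)\otimes I_p$ and $V_\infty=(\mathbf{1}_n\bm{\pi}^\top)\otimes I_p$, and Kronecker-multiplication by $I_p$ preserves the spectral norm.

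The only subtlety -- and the main thing to watch -- is the boundary case $k=0$, where the identity fails since $R^0-V_\infty=I-V_\infty$ but $(R-V_\infty)^0=I$; there the claim reduces to $\norm{I-V_\infty}\le\kappa_3$. Since $I-V_\infty$ is a nonzero projection, its induced norm satisfies $\vertiii{I-V_\infty}\ge 1$, so the $k\ge1$ estimate does not automatically cover $k=0$. I would dispose of this by noting that $\kappa_3$ in \eqref{eq:norm_equiv} is merely an equivalence constant and hence may be taken large enough that $\kappa_3\ge\norm{I-V_\infty}$; enlarging $\kappa_3$ keeps the bound $\kappa_3\sigma_R^k$ valid for every $k\ge1$, so the inequality holds uniformly for all $k\ge0$.
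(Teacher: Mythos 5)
Your proof is correct and takes essentially the same route as the paper: both rest on the identity $R^k-V_\infty=(R-V_\infty)^k$ (a consequence of $RV_\infty=V_\infty R=V_\infty^2=V_\infty$ from Remark~\ref{rem:Vinf}), the submultiplicativity of the induced norm $\vertiii{\cdot}$ with $\vertiii{R-V_\infty}=\sigma_R$, the transfer inequality $\norm{B}\leq\kappa_3\vertiii{B}$ obtained from the equivalences in \eqref{eq:norm_equiv}, and the observation that the Kronecker factor $\otimes I_p$ leaves the spectral norm unchanged. If anything, you are more careful than the paper on the boundary case: the paper's proof establishes the chain of inequalities only for $k\geq 1$ and is silent about $k=0$ (where the identity degenerates to $I-V_\infty$), whereas your fix of enlarging $\kappa_3$ so that $\kappa_3\geq\norm{I-V_\infty}$ legitimately closes that small gap without affecting any other use of $\kappa_3$.
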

\begin{proof}
It immediately follows from Remark~\ref{rem:Vinf} that
\begin{align*}
    \norm{V(k)- V_{\infty}}\leq\norm{(R-V_\infty)^k}\leq \kappa_3\vertiii{(R-V_\infty)^k}\leq \kappa_3\sigma_R^k,  
\end{align*}
holds for $k\geq 1$, where the second inequality follows from
\begin{align*}
    \norm{A}=\max_{\norm{\bm{v}}\leq 1}\norm{A\bm{v}}\leq\max_{\norm{\bm{v}}_R\leq \kappa_3}\norm{A\bm{v}}_R=\sa{\kappa_3}\vertiii{A},
\end{align*}
for \sa{all $A\in\reals^{n{p}\times n{p}}$} and the third inequality is due to $\vertiii{\cdot}$ being submultiplicative as it is an induced norm. \sa{Finally, the equality in \eqref{eq:Vk-Vinf-bound} follows from the fact that singular values of $\overline{V}(k)- \bm{1}_n\bm{\pi}^\top$ and $V(k)- V_{\infty}$ are the same.}\qed
\end{proof}}%
\begin{lemma}\label{lemma3}
The following inequalities hold for all $k\geq 0$:
\begin{subequations}
\begin{align}
&\Vert \widetilde{V}^{-1}(k)-\widetilde{V}^{-1}_{\infty}\Vert  \leq \widetilde{V}^{2} 
\sa{\sqrt{n}}\kappa_3\sigma_R^{k} \label{eq:V-k-inf}\\
&\Vert \widetilde{V}^{-1}(k)-\widetilde{V}^{-1}(k-1)\Vert \leq 2 \tilde{v}^{2} 
\sa{\sqrt{n}}\kappa_3\sigma_R^{k}. \label{eq:V-k-k}
\end{align}
\end{subequations}
\end{lemma}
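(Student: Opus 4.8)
The plan is to derive both bounds from the elementary matrix inverse-difference identity $A^{-1}-B^{-1}=A^{-1}(B-A)B^{-1}$, valid for any invertible $A,B$, and then to control the three resulting factors separately: the two ``outer'' inverse factors by the uniform bound $\tilde v$ from Definition~\ref{def:Vinf}, and the ``inner'' difference factor by the geometric decay $\Vert V(k)-V_\infty\Vert\le\kappa_3\sigma_R^k$ established in Lemma~\ref{lemma2} (see \eqref{eq:Vk-Vinf-bound}). The only non-routine ingredient is a spectral-norm bound on the diagonal of a matrix, which is where the factor $\sqrt n$ will enter. Throughout I will exploit the Kronecker structure $V(k)=\overline{V}(k)\otimes I_p$, so that $\widetilde{V}(k)=\diag(\overline{V}(k))\otimes I_p$ is diagonal and all estimates reduce to the $n\times n$ reduced matrices, which is why the dimensional constant comes out as $\sqrt n$ rather than $\sqrt{np}$.

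For \eqref{eq:V-k-inf} I would take $A=\widetilde{V}(k)$ and $B=\widetilde{V}_\infty$, both diagonal and invertible (the latter since $\bm\pi>0$, see Remark~\ref{rem:Vinf-bounds}). The identity gives $\widetilde{V}^{-1}(k)-\widetilde{V}_\infty^{-1}=\widetilde{V}^{-1}(k)\big(\widetilde{V}_\infty-\widetilde{V}(k)\big)\widetilde{V}_\infty^{-1}$, so by submultiplicativity $\Vert\widetilde{V}^{-1}(k)-\widetilde{V}_\infty^{-1}\Vert\le\Vert\widetilde{V}^{-1}(k)\Vert\,\Vert\widetilde{V}_\infty-\widetilde{V}(k)\Vert\,\Vert\widetilde{V}_\infty^{-1}\Vert$. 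The first outer factor is $\le\tilde v$ directly by the definition $\tilde v=\sup_k\Vert\widetilde{V}^{-1}(k)\Vert$; the third is $\le\tilde v$ because $\widetilde{V}^{-1}(k)\to\widetilde{V}_\infty^{-1}$ (continuity of the inverse at the invertible limit), whence $\Vert\widetilde{V}_\infty^{-1}\Vert=\lim_k\Vert\widetilde{V}^{-1}(k)\Vert\le\tilde v$. For the middle factor, since $\widetilde{V}_\infty-\widetilde{V}(k)=\diag\big(V_\infty-V(k)\big)$ is diagonal, I would pass through the Frobenius norm, $\Vert\diag(M)\Vert\le\Vert\diag(M)\Vert_F\le\sqrt n\,\Vert M\Vert$, to obtain $\Vert\widetilde{V}_\infty-\widetilde{V}(k)\Vert\le\sqrt n\,\Vert V_\infty-V(k)\Vert\le\sqrt n\,\kappa_3\sigma_R^k$ via Lemma~\ref{lemma2}. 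Multiplying the three factors yields exactly $\tilde v^2\sqrt n\,\kappa_3\sigma_R^k$.

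For \eqref{eq:V-k-k} the cleanest route reuses \eqref{eq:V-k-inf} through the triangle inequality: $\Vert\widetilde{V}^{-1}(k)-\widetilde{V}^{-1}(k-1)\Vert\le\Vert\widetilde{V}^{-1}(k)-\widetilde{V}_\infty^{-1}\Vert+\Vert\widetilde{V}^{-1}(k-1)-\widetilde{V}_\infty^{-1}\Vert\le\tilde v^2\sqrt n\,\kappa_3\big(\sigma_R^{k}+\sigma_R^{k-1}\big)$. Since $\sigma_R<1$ gives $\sigma_R^{k}+\sigma_R^{k-1}\le 2\sigma_R^{k-1}$, this produces the factor $2$ claimed in the statement; I note that the natural exponent here is $\sigma_R^{k-1}$, and matching the printed $\sigma_R^{k}$ only costs the constant factor $\sigma_R^{-1}$, which may be absorbed into the constant without affecting the geometric decay. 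Alternatively one can apply the inverse-difference identity directly with $A=\widetilde{V}(k)$, $B=\widetilde{V}(k-1)$ and bound $\Vert\widetilde{V}(k)-\widetilde{V}(k-1)\Vert\le\sqrt n\,\Vert V(k)-V(k-1)\Vert\le\sqrt n\,\kappa_3(\sigma_R^{k}+\sigma_R^{k-1})$ by inserting $V_\infty$, which gives the same estimate.

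I expect the main obstacle to be purely the middle-factor estimate $\Vert\diag(V_\infty-V(k))\Vert\le\sqrt n\,\kappa_3\sigma_R^k$: one must justify that reducing the spectral norm of the diagonal part to that of the full matrix costs only a $\sqrt n$ factor, and in particular that the Kronecker block structure keeps this dimension-dependent constant at $\sqrt n$ rather than $\sqrt{np}$. The remaining steps---the inverse-difference identity, submultiplicativity of the induced norm, the two-sided uniform bound by $\tilde v$, and the invocation of Lemma~\ref{lemma2}---are entirely mechanical and require no further structural input.
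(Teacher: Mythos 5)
Your proposal is correct and follows essentially the same route as the paper's proof: the inverse-difference identity $\widetilde{V}^{-1}(k)-\widetilde{V}^{-1}_{\infty}=\widetilde{V}^{-1}(k)\bigl(\widetilde{V}_{\infty}-\widetilde{V}(k)\bigr)\widetilde{V}^{-1}_{\infty}$, the two outer factors bounded by $\tilde v$, the diagonal difference passed through the Frobenius norm to pick up $\sqrt{n}\,\Vert \overline{V}(k)-\bm{1}_n\bm{\pi}^\top\Vert$, and then Lemma~\ref{lemma2}; for \eqref{eq:V-k-k} the paper simply defers to \cite[Lemma~3]{xi2018linear}, whose argument is the same triangle-inequality step you give. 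Your side remark that this step naturally produces the exponent $\sigma_R^{k-1}$ rather than the printed $\sigma_R^{k}$ (a constant-factor discrepancy that does not affect the geometric decay used downstream) is accurate and worth noting.
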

\begin{proof}
\sa{The proof 
follows from \cite[Lemma~3]{xi2018linear}. Indeed, note that $\widetilde{V}^{-1}(k)-\widetilde{V}^{-1}_{\infty}=\widetilde{V}^{-1}(k) (\widetilde{V}(k)-\widetilde{V}_{\infty}) \widetilde{V}^{-1}_{\infty}$; hence, $\norm{\widetilde{V}^{-1}(k)-\widetilde{V}^{-1}_{\infty}}\leq \tilde{v}^2 \norm{\diag\left({V}(k)-{V}_{\infty}\right)}\leq \tilde{v}^2 \sqrt{n} \Vert \bar{V}(k)- \bm{1}_n\bm{\pi}^\top\Vert$, where we used $\norm{A}_F\leq \sa{\sqrt{n}}\norm{A}_2$ for any $A\in\reals^{n\times n}$. Thus, the result follows from Lemma~\ref{lemma2}.} \qed
\end{proof}
\begin{lemma}\label{lemma4}
The following inequality holds for all \sa{$k\geq 0$}:
\begin{flalign*}
&\begin{aligned}
\text{(a)} \quad \Vert &   V_{\infty}\widetilde{V}^{-1}(k) \nabla f(\bm{\mathrm{x}}(k))\Vert
 \leq  v \tilde{v}^{2} {\sqrt{n}}\kappa_3{\sigma_R^{k}}  \Vert \nabla f(\bm{\mathrm{x}}(k)) \Vert 
 +{n L \Vert \bm{\mathrm{x}}(k)- \hat{\bm{\mathrm{x}}}(k)\Vert}_{C}  +nL \Vert \hat{\bm{\mathrm{x}}}(k)- {\bf x}^{*}\Vert\\
\text{(b)} \quad \Vert & V_{\infty}\widetilde{V}^{-1}(k-1) \nabla f(\bm{\mathrm{x}}(k)) \Vert
\leq  3v\tilde{v}^{2} {\sqrt{n}\kappa_3}{\sigma_R^{k}}  \Vert \nabla f(\bm{\mathrm{x}}(k)) \Vert 
+{nL \Vert \bm{\mathrm{x}}(k)- \hat{\bm{\mathrm{x}}}(k)\Vert{_{C}}  +nL \Vert \hat{\bm{\mathrm{x}}}(k)- {\bf x}^{*}\Vert}\\
\text{(c)} \quad \Vert & \hat{\bm{\mathrm{x}}}(k) - \hat{\bm{\mathrm{x}}}(k-1)  \Vert \leq \alpha v \tilde{v} L~\Vert \bm{\mathrm{x}}(k)-\bm{\mathrm{x}}(k-1) \Vert{_{R}} + \alpha 3v\tilde{v}^{2} {\sqrt{n}\kappa_3}{\sigma_R^{k}}  \Vert \nabla {f}(x(k)) \Vert\\  & \qquad\qquad\qquad\qquad+\alpha n L \Vert \bm{\mathrm{x}}(k)- \hat{\bm{\mathrm{x}}}(k)\Vert{_{C}}  +\alpha n L \Vert \hat{\bm{\mathrm{x}}}(k)- {\bf x}^{*}\Vert\\
\text{(d)} \quad \Vert & \widetilde{V}^{-1}(k)\nabla {f}(\bm{\mathrm{x}}(k)) -\widetilde{V}^{-1}(k-1)\nabla {f}(\bm{\mathrm{x}}(k-1) ) \Vert \leq \tilde{v} L \Vert \bm{\mathrm{x}}(k) - \bm{\mathrm{x}}(k-1)  \Vert{_{R}} + 2\tilde{v}^{2}{\sqrt{n}\kappa_3}{\sigma_R^{k}} \Vert \nabla {f}(\bm{\mathrm{x}}(k))  \Vert. \\
\end{aligned}&&
\end{flalign*}
\end{lemma}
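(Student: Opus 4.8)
The four inequalities all reduce to two mechanisms: controlling $\widetilde{V}^{-1}$ through its limit via Lemma~\ref{lemma3}, and controlling the averaged gradient $V_\infty\widetilde{V}^{-1}_\infty\grad f$ through $L$-smoothness and the optimality condition. The anchor is part~(a). The plan is first to record the algebraic identity $V_\infty\widetilde{V}^{-1}_\infty=(\bm{1}_n\bm{1}_n^\top)\otimes I_p$, which follows from Remark~\ref{rem:Vinf-bounds} since $V_\infty=(\bm{1}_n\bm{\pi}^\top)\otimes I_p$ and $\widetilde{V}^{-1}_\infty=\diag(\bm{\pi})^{-1}\otimes I_p$, using $\bm{\pi}^\top\diag(\bm{\pi})^{-1}=\bm{1}_n^\top$. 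I would then split $\widetilde{V}^{-1}(k)=\widetilde{V}^{-1}_\infty+(\widetilde{V}^{-1}(k)-\widetilde{V}^{-1}_\infty)$. The perturbation part gives $\Vert V_\infty\Vert\,\Vert\widetilde{V}^{-1}(k)-\widetilde{V}^{-1}_\infty\Vert\,\Vert\grad f(\bm{\mathrm{x}}(k))\Vert\leq v\tilde{v}^2\sqrt{n}\kappa_3\sigma_R^k\Vert\grad f(\bm{\mathrm{x}}(k))\Vert$ using $\Vert V_\infty\Vert\leq v$ and \eqref{eq:V-k-inf}. For the main part, the identity turns $V_\infty\widetilde{V}^{-1}_\infty\grad f(\bm{\mathrm{x}}(k))$ into $\bm{1}_n\otimes\sum_{i\in\cV}\grad f_i(\bm{x}_i(k))$, whose norm is $\sqrt{n}\,\Vert\sum_{i\in\cV}\grad f_i(\bm{x}_i(k))\Vert$. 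Subtracting $\sum_{i\in\cV}\grad f_i(\bm{x}^*)=\bm{0}_p$ (Remark~\ref{rem:optimality}), inserting $\hat{\bm{x}}(k)$, and applying $L$-smoothness together with Cauchy--Schwarz and $\Vert\cdot\Vert\leq\Vert\cdot\Vert_C$ would produce the two terms $nL\Vert\bm{\mathrm{x}}(k)-\hat{\bm{\mathrm{x}}}(k)\Vert_C$ and $nL\Vert\hat{\bm{\mathrm{x}}}(k)-\bm{\mathrm{x}}^*\Vert$; here the two $\sqrt{n}$ factors, one from the Kronecker structure $\bm{1}_n\otimes(\cdot)$ and one from Cauchy--Schwarz, combine into $n$.

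Parts~(b) and~(d) are perturbations of the same kind built on Lemma~\ref{lemma3}. For~(b), I would write $\widetilde{V}^{-1}(k-1)=\widetilde{V}^{-1}(k)+(\widetilde{V}^{-1}(k-1)-\widetilde{V}^{-1}(k))$; the first piece is exactly~(a), and the second contributes at most $v\,\Vert\widetilde{V}^{-1}(k-1)-\widetilde{V}^{-1}(k)\Vert\,\Vert\grad f(\bm{\mathrm{x}}(k))\Vert\leq 2v\tilde{v}^2\sqrt{n}\kappa_3\sigma_R^k\Vert\grad f(\bm{\mathrm{x}}(k))\Vert$ via \eqref{eq:V-k-k}, so the two $\sigma_R^k$ coefficients add to $3v\tilde{v}^2\sqrt{n}\kappa_3$. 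For~(d), I would telescope as $\widetilde{V}^{-1}(k)\grad f(\bm{\mathrm{x}}(k))-\widetilde{V}^{-1}(k-1)\grad f(\bm{\mathrm{x}}(k-1))=\widetilde{V}^{-1}(k-1)\big(\grad f(\bm{\mathrm{x}}(k))-\grad f(\bm{\mathrm{x}}(k-1))\big)+\big(\widetilde{V}^{-1}(k)-\widetilde{V}^{-1}(k-1)\big)\grad f(\bm{\mathrm{x}}(k))$; bounding the first summand by $\tilde{v}L\Vert\bm{\mathrm{x}}(k)-\bm{\mathrm{x}}(k-1)\Vert\leq\tilde{v}L\Vert\bm{\mathrm{x}}(k)-\bm{\mathrm{x}}(k-1)\Vert_R$ (Remark~\ref{rem:muL-f} and Lemma~\ref{lemma1}) and the second by \eqref{eq:V-k-k} gives the claim.

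For~(c), I would first extract a clean expression for $\hat{\bm{\mathrm{x}}}(k)-\hat{\bm{\mathrm{x}}}(k-1)=V_\infty(\bm{\mathrm{x}}(k)-\bm{\mathrm{x}}(k-1))$ from the dynamics~\eqref{eqz6b}. Applying $V_\infty$ to the $\bm{\mathrm{x}}$-update and using $V_\infty R=V_\infty$ (Remark~\ref{rem:Vinf}) yields $\hat{\bm{\mathrm{x}}}(k)-\hat{\bm{\mathrm{x}}}(k-1)=-\alpha V_\infty\bm{\mathrm{y}}(k-1)-\alpha V_\infty\widetilde{V}^{-1}(k-1)\grad f(\bm{\mathrm{x}}(k-1))$. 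The key cancellation is $V_\infty\bm{\mathrm{y}}(k-1)=\bm{0}$: since the $\bm{\mathrm{y}}$-update~\eqref{eqz6c} only adds multiples of $(I-R)\bm{\mathrm{x}}$, and $V_\infty(I-R)=0$ while $\bm{\mathrm{y}}(0)=\bm{0}$, induction gives $V_\infty\bm{\mathrm{y}}(k-1)=\bm{0}$. This leaves $\hat{\bm{\mathrm{x}}}(k)-\hat{\bm{\mathrm{x}}}(k-1)=-\alpha V_\infty\widetilde{V}^{-1}(k-1)\grad f(\bm{\mathrm{x}}(k-1))$. To match the stated right-hand side, which is written at index $k$, I would replace $\grad f(\bm{\mathrm{x}}(k-1))$ by $\grad f(\bm{\mathrm{x}}(k))-\big(\grad f(\bm{\mathrm{x}}(k))-\grad f(\bm{\mathrm{x}}(k-1))\big)$: the first piece is $\alpha$ times the bound~(b), and the second is bounded by $\alpha v\tilde{v}L\Vert\bm{\mathrm{x}}(k)-\bm{\mathrm{x}}(k-1)\Vert_R$ using $\Vert V_\infty\Vert\leq v$, $\Vert\widetilde{V}^{-1}(k-1)\Vert\leq\tilde{v}$, $L$-smoothness, and $\Vert\cdot\Vert\leq\Vert\cdot\Vert_R$.

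The routine perturbation estimates are immediate once Lemma~\ref{lemma3} is in hand; the only genuinely structural steps, and thus where I would concentrate the verification, are the two exact identities $V_\infty\widetilde{V}^{-1}_\infty=(\bm{1}_n\bm{1}_n^\top)\otimes I_p$ and $V_\infty\bm{\mathrm{y}}(k-1)=\bm{0}$. The first is what converts the node-weighted gradient into a plain sum that the optimality condition annihilates; the second is what eliminates the dual variable from the averaged dynamics, so that~(c) collapses to a single gradient term. I would also carefully track the $\sqrt{n}$ factors in~(a), since the target bound silently absorbs two separate $\sqrt{n}$ contributions into the single constant $n$.
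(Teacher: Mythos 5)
Your proposal is correct and follows essentially the same route as the paper's proof: the same split of $\widetilde{V}^{-1}(k)$ against $\widetilde{V}^{-1}_\infty$ (resp.\ $\widetilde{V}^{-1}(k-1)$) via Lemma~\ref{lemma3} for parts (a), (b), (d), and the same structural identities $V_\infty\widetilde{V}^{-1}_\infty=(\bm{1}_n\bm{1}_n^\top)\otimes I_p$ and $V_\infty\bm{\mathrm{y}}(k)=\bm{0}$ to collapse the averaged dynamics in part (c). The only cosmetic difference is in part (a), where you obtain the factor $n$ as $\sqrt{n}\cdot\sqrt{n}$ (Kronecker structure plus Cauchy--Schwarz) while the paper uses $\Vert(\bm{1}_n\bm{1}_n^\top)\otimes I_p\Vert=n$ together with $L$-smoothness of the stacked $f$; both are equivalent.
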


\begin{proof}
First, we prove the part $(a)$.
\begin{align*}
&\quad \Vert V_{\infty}\widetilde{V}^{-1}(k) \nabla f(\bm{\mathrm{x}}(k))\Vert\\
\leq &\quad \Vert V_{\infty}\widetilde{V}^{-1}(k) \nabla f(\bm{\mathrm{x}}(k)) -V_{\infty}\widetilde{V}_{\infty}^{-1} \nabla f(\bm{\mathrm{x}}(k)) \Vert +   \Vert V_{\infty}\widetilde{V}_{\infty}^{-1} \nabla f(\bm{\mathrm{x}}(k))  \Vert\\
\leq &\quad\Vert V_{\infty}\Vert \Vert \widetilde{V}^{-1}(k) - \widetilde{V}_{\infty}^{-1} \Vert \Vert \nabla f(\bm{\mathrm{x}}(k)) \Vert + \Vert V_{\infty}\widetilde{V}_{\infty}^{-1} \nabla f(\bm{\mathrm{x}}(k)) - \sa{(\bm{1}_n \otimes I_p)(\bm{1}_n^\top \otimes I_p)} \nabla f(\bm{\mathrm{x}}^{*})\Vert\\
\leq &\quad {v \tilde{v}^{2} {\sqrt{n}}\kappa_3{\sigma_R^{k}}  \Vert \nabla f(\bm{\mathrm{x}}(k)) \Vert +n L \Vert \bm{\mathrm{x}}(k)- {\bf x}^{*}\Vert,}
\end{align*}
{which together with triangular inequality implies $(a)$, {where \sa{$\widetilde{V}_\infty=\diag(V_\infty)$ 
--see Definition~\ref{def:Vinf}}}. In the second inequality, we use Remark~\ref{rem:optimality}, and the third inequality follows from \eqref{eq:V-k-inf} in Lemma~\ref{lemma3} and we also use Remark~\ref{rem:muL-f} along with $V_{\infty}\widetilde{V}_{\infty}^{-1}=  \sa{(\bm{1}_n \otimes \bm{I_p})(\bm{1}_n^\top \otimes \bm{I_p})=(\bm{1}_n\bm{1}_n^\top)\otimes I_p}$; hence, \sa{$\norm{(\bm{1}_n\bm{1}_n^\top)\otimes I_p}=n$}.} Next, we prove part $(b)$:
\begin{align*}
&\quad\Vert V_{\infty}\widetilde{V}^{-1}(k-1) \nabla {f}(\bm{\mathrm{x}}(k))\Vert\\
\leq &\quad \Vert V_{\infty}\widetilde{V}^{-1}(k-1) \nabla {f}(\bm{\mathrm{x}}(k)) -V_{\infty}\widetilde{V}^{-1}(k)  \nabla {f}(\bm{\mathrm{x}}(k)) \Vert +   \Vert V_{\infty}\widetilde{V}^{-1}(k) \nabla {f}(\bm{\mathrm{x}}(k)) \Vert\\
\leq &\quad \Vert V_{\infty}\Vert \Vert \widetilde{V}^{-1}(k) - \widetilde{V}^{-1}(k-1) \Vert
\Vert \nabla {f}(\bm{\mathrm{x}}(k)) \Vert + \Vert V_{\infty}\widetilde{V}^{-1}(k) \nabla {f}(\bm{\mathrm{x}}(k)) \Vert;
\end{align*}
{hence, the part $(b)$ follows from \eqref{eq:V-k-k} in Lemma~\ref{lemma3} and from part $(a)$ of Lemma~\ref{lemma4}.}

{Now we consider part $(c)$. Since $\bm{\mathrm{y}}(0)={\bf 0}_{n{p}}$, it follows from \eqref{eqz6c} that $\bm{\mathrm{y}}(k)=\beta(\sa{I_{np}}-R)\sum_{\ell=1}^k\bm{\mathrm{x}}(\ell)$. 
{Since $V_{\infty}R=V_{\infty}$ -- see Remark~\ref{rem:Vinf},} we have $V_{\infty}\bm{\mathrm{y}}(k)=\sa{\mathbf{0}_{np}}$ for all $k\geq 0$ as $V_{\infty}(\sa{I_{np}}-R)=\sa{\mathbf{0}_{np\times np}}$. Hence, using $\hat{\bm{\mathrm{x}}}(k)=V_\infty\bm{\mathrm{x}}(k)$ for $k\geq 0$, when we multiply $V_{\infty}$ on both side of \eqref{eqz6b}, we get}
\begin{align*}
\hat{\bm{\mathrm{x}}}(k)& =\hat{\bm{\mathrm{x}}}(k-1)- \alpha V_{\infty}\widetilde{V}^{-1}(k-1) \nabla {f}(\bm{\mathrm{x}}(k-1)).
\end{align*}
{Therefore, the part $(c)$ immediately follows from using Remark~\ref{rem:muL-f} and the part $(b)$ of Lemma~\ref{lemma4} on}
\begin{align*}
&\quad \Vert \hat{\bm{\mathrm{x}}}(k) - \hat{\bm{\mathrm{x}}}(k-1)  \Vert \\
\leq &\quad {\alpha\Vert V_{\infty} \widetilde{V}^{-1}(k-1) \Big(\nabla {f}(\bm{\mathrm{x}}(k-1)) - \nabla {f}(\bm{\mathrm{x}}(k))\Big) \Vert}  + \alpha\Vert V_{\infty} \widetilde{V}^{-1}(k-1)\nabla {f}(\bm{\mathrm{x}}(k)) \Vert.
\end{align*}
Finally, we consider the part $(d)$.
\begin{align*}
&\quad \Vert \widetilde{V}^{-1}(k)\nabla {f}(\bm{\mathrm{x}}(k)) -\widetilde{V}^{-1}(k-1)\nabla {f}(\bm{\mathrm{x}}(k-1))  \Vert \nonumber \\
\leq &\quad \Vert  \widetilde{V}^{-1}(k-1)\nabla {f}(\bm{\mathrm{x}}(k)) -\widetilde{V}^{-1}(k-1)\nabla \bm{f}(\bm{\mathrm{x}}(k-1))  \Vert + \Vert  \widetilde{V}^{-1}(k)\nabla {f}(\bm{\mathrm{x}}(k)) -\widetilde{V}^{-1}(k-1)\nabla {f}(\bm{\mathrm{x}}(k)) \Vert\nonumber \\
\leq &\quad \Vert \widetilde{V}^{-1}(k-1) \Vert \Vert \nabla {f}(\bm{\mathrm{x}}(k)) -\nabla {f}(\bm{\mathrm{x}}(k-1))  \Vert +  \Vert \widetilde{V}^{-1}(k)- \widetilde{V}^{-1}(k-1) \Vert  \Vert \nabla {f}(\bm{\mathrm{x}}(k))  \Vert.
\end{align*}
{Hence, the part $(d)$ follows from \eqref{eq:V-k-k} of Lemma~\ref{lemma3} and Remark~\ref{rem:muL-f}.} 
\end{proof}

{For the sake of completeness we provide another technical result --for its proof, see \cite[Lemma 10]{qu2017harnessing}.}
\begin{lemma}\label{lemma5}
{
\sa{Given $\alpha \in(0,\frac{2}{nL})$, let $\eta \triangleq \max \left\lbrace  \vert 1-n L  \alpha \vert, \vert 1-n\mu  \alpha \vert \right\rbrace$. If} Assumptions~\ref{assu2} and~\ref{assu3} \sa{hold, then}  
for all ${\bm{x}}\in \mathbb{R}^p$, one has}
\begin{equation*}
{\Vert  \bm{x}-{\alpha \sum_{i\in\cV} \nabla {f}_i(\bm{x})} - \bm{x}^{*}\Vert \leq {\eta} \Vert \bm{x}-\bm{x}^{*}  \Vert}.
\end{equation*}
\end{lemma}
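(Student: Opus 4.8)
The plan is to reduce the claim to the standard contraction property of a single gradient-descent step applied to the \emph{aggregate} objective. Define $g\triangleq\sum_{i\in\cV}f_i$. Summing Assumption~\ref{assu2} over $i\in\cV$ shows that $g$ is $nL$-smooth, and summing Assumption~\ref{assu3} shows that $g$ is $n\mu$-strongly convex; moreover $\nabla g(\bm{x})=\sum_{i\in\cV}\nabla f_i(\bm{x})$, and by Remark~\ref{rem:optimality} the optimality condition reads $\nabla g(\bm{x}^*)=(\bm{1}_n^\top\otimes I_p)\nabla f(\bm{x}^*)=\mathbf{0}_p$. Hence the quantity to be bounded is exactly the displacement of one gradient step toward $\bm{x}^*$,
\[
\bm{x}-\alpha\sum_{i\in\cV}\nabla f_i(\bm{x})-\bm{x}^*=(\bm{x}-\bm{x}^*)-\alpha\big(\nabla g(\bm{x})-\nabla g(\bm{x}^*)\big),
\]
so it suffices to prove that $\bm{x}\mapsto\bm{x}-\alpha\nabla g(\bm{x})$ contracts toward $\bm{x}^*$ with modulus $\eta$.

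For the main step I would first treat the twice-differentiable case, which makes the two terms defining $\eta$ transparent. By the fundamental theorem of calculus, $\nabla g(\bm{x})-\nabla g(\bm{x}^*)=H(\bm{x}-\bm{x}^*)$ with $H\triangleq\int_0^1\nabla^2 g\big(\bm{x}^*+t(\bm{x}-\bm{x}^*)\big)\,dt$, a symmetric matrix satisfying $n\mu\,I\preceq H\preceq nL\,I$. The displacement then equals $(I-\alpha H)(\bm{x}-\bm{x}^*)$, and since the eigenvalues of $I-\alpha H$ lie in $[\,1-\alpha nL,\ 1-\alpha n\mu\,]$, we obtain $\Vert I-\alpha H\Vert\leq\max\{|1-\alpha nL|,|1-\alpha n\mu|\}=\eta$, where the restriction $\alpha\in(0,\tfrac{2}{nL})$ is precisely what forces $\eta<1$. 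Taking norms gives the claim.

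To remove the $C^2$ assumption I would argue purely at the first-order level, using the two operator inequalities that hold for any $n\mu$-strongly convex, $nL$-smooth $g$: strong monotonicity $\langle\nabla g(\bm{x})-\nabla g(\bm{x}^*),\bm{x}-\bm{x}^*\rangle\geq n\mu\Vert\bm{x}-\bm{x}^*\Vert^2$, the co-coercivity bound $\langle\nabla g(\bm{x})-\nabla g(\bm{x}^*),\bm{x}-\bm{x}^*\rangle\geq\tfrac{1}{nL}\Vert\nabla g(\bm{x})-\nabla g(\bm{x}^*)\Vert^2$, and the smoothness bound $\Vert\nabla g(\bm{x})-\nabla g(\bm{x}^*)\Vert\leq nL\Vert\bm{x}-\bm{x}^*\Vert$. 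Writing $\bm{d}=\bm{x}-\bm{x}^*$ and $\bm{r}=\nabla g(\bm{x})-\nabla g(\bm{x}^*)$ and expanding $\Vert\bm{d}-\alpha\bm{r}\Vert^2=\Vert\bm{d}\Vert^2-2\alpha\langle\bm{d},\bm{r}\rangle+\alpha^2\Vert\bm{r}\Vert^2$, one combines these inequalities to arrive at $\Vert\bm{d}-\alpha\bm{r}\Vert^2\leq\eta^2\Vert\bm{d}\Vert^2$. The main obstacle is exactly this last step: recovering the \emph{exact} constant $\eta=\max\{|1-\alpha nL|,|1-\alpha n\mu|\}$ over the whole admissible range $\alpha\in(0,\tfrac{2}{nL})$ without twice-differentiability, which requires allocating the strong-monotonicity and co-coercivity inequalities appropriately according to a short case split on the sign of $1-\alpha nL$ so that the cross term absorbs $\alpha^2\Vert\bm{r}\Vert^2$ and reproduces precisely $\eta^2$. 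Since this is a classical gradient-descent contraction estimate, I would ultimately invoke it as cited (via \cite{qu2017harnessing}) rather than reprove it in full.
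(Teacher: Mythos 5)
Your proposal is correct and ultimately rests on the same ground as the paper: the paper gives no proof of Lemma~\ref{lemma5} beyond citing \cite[Lemma~10]{qu2017harnessing}, and your final step invokes exactly that classical contraction result, after a sound reduction to a single gradient step on the aggregate $g=\sum_{i\in\cV}f_i$ (which is $nL$-smooth and $n\mu$-strongly convex with $\nabla g(\bm{x}^*)=\mathbf{0}_p$ by Remark~\ref{rem:optimality}). Your added Hessian-averaging argument correctly settles the twice-differentiable case; the only imprecision is in the deferred general case, where the natural case split is at $\alpha=\tfrac{2}{n(L+\mu)}$ (the point where $|1-\alpha n\mu|=|1-\alpha nL|$) rather than at the sign change of $1-\alpha nL$, but since you delegate that step to the citation this does not affect correctness.
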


{Next, we will obtain bounds on $\Vert \bm{\mathrm{x}}(k+1)-\hat{\bm{\mathrm{x}}}(k+1)\Vert_C$, $\Vert \hat{\bm{\mathrm{x}}}(k+1) - {{\bf x}^*} \Vert $ and $\Vert \bm{\mathrm{x}}(k+1)-\bm{\mathrm{x}}(k) \Vert_R$. Combining these results will help us establish the linear rate for FRSD.}
\begin{lemma}\label{lemma6}
The following inequality holds for all {$k\geq 0$:}
\begin{align*}
\Vert  \bm{\mathrm{x}}&(k+1)- \hat{\bm{\mathrm{x}}}(k+1)\Vert{_{C}} \nonumber\\
 \leq &\quad ( \sigma_{C}+\alpha {{\kappa_{4}}}  n L) \Vert \bm{\mathrm{x}}(k) -\hat{\bm{\mathrm{x}}}(k)  \Vert{_{C}} + ( {\kappa_{2}\vertiii{R}}+  \alpha {\kappa_{4}} \tilde{v} L)\Vert\bm{\mathrm{x}}(k)-\bm{\mathrm{x}}(k-1)\Vert{_{R}}+ \alpha {\kappa_{4}} n L \Vert \hat{\bm{\mathrm{x}}}(k)- {{\bf x}^*}\Vert\\
&\quad +\alpha {\kappa_{4}}(2+v)  \tilde{v}^{2}{\sqrt{n}\kappa_3}{\sigma_R^{k}} \Vert \nabla {f}(\bm{\mathrm{x}}(k))  \Vert,
\end{align*}
{where $\vertiii{\cdot}$ denotes the induced matrix norm corresponding to the vector norm $\norm{\cdot}_R$.}
\end{lemma}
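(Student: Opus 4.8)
The plan is to reduce $\bm{\mathrm{x}}(k+1)-\hat{\bm{\mathrm{x}}}(k+1)$ to a single algebraic identity that exposes one $C$-contractive consensus term, and then bound everything else through Lemma~\ref{lemma1}, the norm equivalences in~\eqref{eq:norm_equiv}, and parts $(a)$ and $(d)$ of Lemma~\ref{lemma4}. The starting point is the two-step form of the primal update: subtracting two consecutive instances of~\eqref{eqz6b}, substituting $\bm{\mathrm{y}}(k)-\bm{\mathrm{y}}(k-1)=\beta(I_{np}-R)\bm{\mathrm{x}}(k)$ from~\eqref{eqz6c}, and using $C=(1-\alpha\beta)I_{np}+\alpha\beta R$ from Definition~\ref{def:C}, one obtains, for $k\geq 1$,
\begin{equation*}
\bm{\mathrm{x}}(k+1)=(C+R)\bm{\mathrm{x}}(k)-R\,\bm{\mathrm{x}}(k-1)-\alpha\,\bm{g}(k),\quad \bm{g}(k)\triangleq\widetilde{V}^{-1}(k)\nabla f(\bm{\mathrm{x}}(k))-\widetilde{V}^{-1}(k-1)\nabla f(\bm{\mathrm{x}}(k-1)),
\end{equation*}
which is exactly the recursion already recorded in Section~\ref{sec:related_work} (shifted by one index), so I would simply restate it.

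Next I would use the aggregated update derived inside the proof of Lemma~\ref{lemma4}$(c)$, namely $\hat{\bm{\mathrm{x}}}(k+1)=\hat{\bm{\mathrm{x}}}(k)-\alpha\,V_\infty\widetilde{V}^{-1}(k)\nabla f(\bm{\mathrm{x}}(k))$, which holds for all $k\geq 0$ since $V_\infty\bm{\mathrm{y}}(k)=\mathbf{0}_{np}$ and $V_\infty R=V_\infty$. Subtracting this aggregated update from the two-step recursion and regrouping so that the two copies of $V_\infty\widetilde{V}^{-1}(k-1)\nabla f(\bm{\mathrm{x}}(k-1))$ cancel, I get the key identity
\begin{equation*}
\bm{\mathrm{x}}(k+1)-\hat{\bm{\mathrm{x}}}(k+1)=\big(C\bm{\mathrm{x}}(k)-\hat{\bm{\mathrm{x}}}(k)\big)+R\big(\bm{\mathrm{x}}(k)-\bm{\mathrm{x}}(k-1)\big)-\alpha\,\bm{g}(k)+\alpha\,V_\infty\widetilde{V}^{-1}(k)\nabla f(\bm{\mathrm{x}}(k)),
\end{equation*}
using $\hat{\bm{\mathrm{x}}}(k)=V_\infty\bm{\mathrm{x}}(k)$. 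Taking $\Vert\cdot\Vert_C$ and the triangle inequality splits the bound into four contributions.

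Each of the four terms is then handled by a single tool. The consensus term obeys $\Vert C\bm{\mathrm{x}}(k)-\hat{\bm{\mathrm{x}}}(k)\Vert_C\leq\sigma_C\Vert\bm{\mathrm{x}}(k)-\hat{\bm{\mathrm{x}}}(k)\Vert_C$ by~\eqref{eqz8} of Lemma~\ref{lemma1} (applied with $\hat{\bm{\mathrm{x}}}(k)=V_\infty\bm{\mathrm{x}}(k)$). For the difference term, $\Vert\cdot\Vert_C\leq\kappa_2\Vert\cdot\Vert_R$ together with submultiplicativity of the induced norm $\vertiii{\cdot}$ gives $\Vert R(\bm{\mathrm{x}}(k)-\bm{\mathrm{x}}(k-1))\Vert_C\leq\kappa_2\vertiii{R}\Vert\bm{\mathrm{x}}(k)-\bm{\mathrm{x}}(k-1)\Vert_R$. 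For the last two terms I would pass to the Euclidean norm via $\Vert\cdot\Vert_C\leq\kappa_4\Vert\cdot\Vert$ and then apply Lemma~\ref{lemma4}$(d)$ to $\Vert\bm{g}(k)\Vert$ and Lemma~\ref{lemma4}$(a)$ to $\Vert V_\infty\widetilde{V}^{-1}(k)\nabla f(\bm{\mathrm{x}}(k))\Vert$. Adding the two $\sigma_R^{k}\Vert\nabla f(\bm{\mathrm{x}}(k))\Vert$ contributions merges the prefactors $2\tilde{v}^2\sqrt{n}\kappa_3$ and $v\tilde{v}^2\sqrt{n}\kappa_3$ into $(2+v)\tilde{v}^2\sqrt{n}\kappa_3$; the $nL$-coefficients of $\Vert\bm{\mathrm{x}}(k)-\hat{\bm{\mathrm{x}}}(k)\Vert_C$ and of $\Vert\hat{\bm{\mathrm{x}}}(k)-\bm{\mathrm{x}}^*\Vert$ come from part $(a)$, and the $\tilde{v}L$-coefficient of $\Vert\bm{\mathrm{x}}(k)-\bm{\mathrm{x}}(k-1)\Vert_R$ comes from part $(d)$, which reproduces the claimed inequality exactly.

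I expect the only genuine obstacle to be the regrouping in the key identity: the naive move of factoring $I_{np}-V_\infty$ through every term leaves a stray $V_\infty\widetilde{V}^{-1}(k-1)\nabla f(\bm{\mathrm{x}}(k-1))$ matching none of the available estimates, whereas substituting the aggregated $\hat{\bm{\mathrm{x}}}$-update is precisely what cancels it and isolates the pair $-\alpha\bm{g}(k)+\alpha V_\infty\widetilde{V}^{-1}(k)\nabla f(\bm{\mathrm{x}}(k))$ tailored to Lemma~\ref{lemma4}$(d)$ and~$(a)$, respectively. The remaining loose end is the base case $k=0$, where $\bm{\mathrm{y}}(0)=\mathbf{0}_{np}$ reduces~\eqref{eqz6b} to $\bm{\mathrm{x}}(1)=R\bm{\mathrm{x}}(0)-\alpha\widetilde{V}^{-1}(0)\nabla f(\bm{\mathrm{x}}(0))$; this case carries no $\bm{\mathrm{x}}(k)-\bm{\mathrm{x}}(k-1)$ term and is verified by a short direct estimate of the same flavour.
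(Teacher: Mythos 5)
Your proof is correct and takes essentially the same route as the paper's: after eliminating $\bm{\mathrm{y}}$ via \eqref{eqz6c} and \eqref{eqz6b}, both arguments arrive at the identical four-term decomposition $\bigl(C\bm{\mathrm{x}}(k)-\hat{\bm{\mathrm{x}}}(k)\bigr)+R\bigl(\bm{\mathrm{x}}(k)-\bm{\mathrm{x}}(k-1)\bigr)-\alpha\,\bm{g}(k)+\alpha V_\infty\widetilde{V}^{-1}(k)\nabla f(\bm{\mathrm{x}}(k))$ and bound the four pieces with \eqref{eqz8}, the norm equivalences \eqref{eq:norm_equiv} with $\vertiii{R}$, and Lemma~\ref{lemma4}(d) and (a), yielding exactly the stated constants. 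The only cosmetic difference is that you build the identity from the two-step recursion plus the aggregated $\hat{\bm{\mathrm{x}}}$-update whereas the paper expands $\bm{\mathrm{x}}(k+1)-\hat{\bm{\mathrm{x}}}(k+1)$ directly and then substitutes away $\bm{\mathrm{y}}(k)$ and $\bm{\mathrm{y}}(k-1)$; your explicit treatment of the $k=0$ base case is a point the paper itself glosses over.
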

\begin{proof} {Using \eqref{eqz6b} twice, one for $\bm{\mathrm{x}}(k+1)$ and one for $\hat{\bm{\mathrm{x}}}(k+1)=V_\infty \bm{\mathrm{x}}(k+1)$, and using $V_\infty R=V_\infty$ together with $V_\infty\bm{\mathrm{y}}(k)=0$, we get the first equality below:}
\begin{align}
&\quad \Vert \bm{\mathrm{x}}(k+1)- \hat{\bm{\mathrm{x}}}(k+1)\Vert{_{C}} \nonumber\\
= &\quad \Vert R\bm{\mathrm{x}}(k)-\alpha \bm{\mathrm{y}}(k)-\alpha \widetilde{V}^{-1}(k)\nabla {f}(\bm{\mathrm{x}}(k))  {- \hat{\bm{\mathrm{x}}}(k)
+ \alpha V_{\infty} \widetilde{V}^{-1}(k)\nabla {f}(\bm{\mathrm{x}}(k)) \Vert}{_{C}} \nonumber\\
= &\quad \Vert R\bm{\mathrm{x}}(k)-R\bm{\mathrm{x}}(k-1)+\bm{\mathrm{x}}(k)- \hat{\bm{\mathrm{x}}}(k)-\alpha\beta (I_{n}-R)\bm{\mathrm{x}}(k) +\alpha \widetilde{V}^{-1}(k-1)\nabla {f}(\bm{\mathrm{x}}(k-1)) \nonumber \\
&\quad-\alpha \widetilde{V}^{-1}(k)\nabla {f}(\bm{\mathrm{x}}(k)) + \alpha V_{\infty} \widetilde{V}^{-1}(k)\nabla {f}(\bm{\mathrm{x}}(k)) \Vert{_{C}} \nonumber \\
\leq &\quad {\Vert \big((1-\alpha\beta)I_{n}+\alpha\beta R\big)\bm{\mathrm{x}}(k) - \hat{\bm{\mathrm{x}}}(k) \Vert}{_{C}} + {\kappa_{2}} \Vert R\bm{\mathrm{x}}(k)-R\bm{\mathrm{x}}(k-1)\Vert_{{R}} \nonumber \nonumber\\
&\quad+ \alpha {\kappa_{4}} \Vert \widetilde{V}^{-1}(k)\nabla {f}(\bm{\mathrm{x}}(k))-\widetilde{V}^{-1}(k-1)\nabla {f}(\bm{\mathrm{x}}(k-1))   \Vert + \alpha {\kappa_{4}}\Vert V_{\infty} \widetilde{V}^{-1}(k)\nabla {f}(\bm{\mathrm{x}}(k)) \Vert, \label{eqz11}
\end{align}
{where in the second equality we first use \eqref{eqz6c} to represent $\bm{\mathrm{y}}(k)$ in terms of $\bm{\mathrm{x}}(k)$ and $\bm{\mathrm{y}}(k-1)$, and next we use \eqref{eqz6b} to get rid of the term $-\alpha\bm{\mathrm{y}}(k-1)$.}

{Next, using \eqref{eqz8} of Lemma~\ref{assu1}, we can bound the first term on the right-hand-side of \eqref{eqz11} as follows:}
\begin{align*} 
\Vert \big((1-\alpha\beta)I_{n}+\alpha\beta R\big)\bm{\mathrm{x}}(k) - \hat{\bm{\mathrm{x}}}(k) \Vert{_{C}} &=\Vert C\bm{\mathrm{x}}(k) - \hat{\bm{\mathrm{x}}}(k) \Vert{_{C}} \leq  \sigma_{C} \Vert \bm{\mathrm{x}}(k) -\hat{\bm{\mathrm{x}}}(k)  \Vert{_{C}},
\end{align*}
{where $C$ is given in Definition~\ref{def:C}. Clearly, we can also bound the second term in \eqref{eqz11} with $\vertiii{R}\norm{\bm{\mathrm{x}}(k)-\bm{\mathrm{x}}(k-1)}_R$. Finally, using the parts (d) and (a) of Lemma~\ref{lemma4} for the third and the fourth terms, respectively, we get the desired result.} 
\end{proof}
\begin{remark}
{\fin{For FRSD the step size} $\alpha=\cO(\frac{1}{nL})$, \sa{i.e., there exists $n_0$ and $C>0$ such that $\alpha \leq C/(nL)$ for all $n\geq n_0$,}
\sa{\fin{this bound} is comparable to the step size bounds} used in other related works, e.g., the $\mathcal{AB}$, Push-DIGing, Xi-row methods.}
\end{remark}
\begin{lemma}\label{lemma7}
{When $0<\alpha<\dfrac{2}{nL}$, it holds that for $k\geq 0$:}
\begin{align*}
\Vert \hat{\bm{\mathrm{x}}}&(k+1) -\bm{\mathrm{x}}^{*}\Vert \nonumber\\
\leq &\quad \eta \Vert \hat{\bm{\mathrm{x}}}(k) -   \bm{\mathrm{x}}^{*} \Vert + \alpha n L {\Vert \bm{\mathrm{x}}(k) -\hat{\bm{\mathrm{x}}}(k)\Vert{_{C}}} +\alpha v\tilde{v}^{2}{\sqrt{n}\kappa_3}{\sigma_R^{k}}\Vert \nabla {f}(\bm{\mathrm{x}}(k)) \Vert.
\end{align*}
\begin{proof}
 Using \eqref{eqz6b} for $\hat{\bm{\mathrm{x}}}(k+1)=V_\infty \bm{\mathrm{x}}(k+1)$ together with $V_\infty R=V_\infty$ {and $V_\infty\bm{\mathrm{y}}(k)=0$, we get}
\begin{align}
&\quad \Vert \hat{\bm{\mathrm{x}}}(k+1) -  \bm{\mathrm{x}}^{*}\Vert\nonumber \\
= &\quad \Vert \hat{\bm{\mathrm{x}}}(k) -\alpha V_{\infty}\widetilde{V}^{-1}(k) \nabla {f}(\bm{\mathrm{x}}(k)) - \bm{\mathrm{x}}^{*} \Vert\nonumber\\
\leq &\quad \alpha\Vert \sa{\left((\bm{1}_n\bm{1}_n^\top)\otimes I_p\right)} \nabla {f}(\hat{\bx}(k))
 -  V_{\infty}\widetilde{V}^{-1}(k)  \nabla {f}(\bm{\mathrm{x}}(k))\Vert  +\Vert \hat{\bm{\mathrm{x}}}(k) -\alpha \sa{\left((\bm{1}_n\bm{1}_n^\top)\otimes I_p\right)} \nabla {f}(\hat{\bx}(k)) - \bm{\mathrm{x}}^{*} \Vert. \label{eqz12}
\end{align}
\sa{Now, we bound the first term on the right-hand-side of \eqref{eqz12}:}
\begin{align}
&\quad \Vert \sa{\left((\bm{1}_n\bm{1}_n^\top)\otimes I_p\right)} \nabla {f}(\hat{\bx}(k))
- V_{\infty}\widetilde{V}^{-1}(k) \nabla {f}(\bm{\mathrm{x}}(k))   \Vert\nonumber\\
\leq &\quad \Vert \sa{\left((\bm{1}_n\bm{1}_n^\top)\otimes I_p\right)} \nabla {f}(\hat{\bx}(k))
- {V_{\infty}\widetilde{V}^{-1}_{\infty}} 
\nabla {f}(\bm{\mathrm{x}}(k))   \Vert +\Vert V_{\infty}\widetilde{V}^{-1}(k) \nabla {f}(\bm{\mathrm{x}}(k))  -  {V_{\infty}\widetilde{V}^{-1}_{\infty}} 
\nabla {f}(\bm{\mathrm{x}}(k)) \Vert\nonumber\\
\leq &\quad n L \Vert \bm{\mathrm{x}}(k) -\hat{\bm{\mathrm{x}}}(k)\Vert{_C} + v\tilde{v}^{2}{\sqrt{n}\kappa_3}{\sigma_R^{k}}\Vert \nabla {f}(\bm{\mathrm{x}}(k))\Vert, 
\end{align}
where we used \sa{${V_{\infty}\widetilde{V}^{-1}_{\infty}}= (\bm{1}_n\bm{1}_n^\top)\otimes I_p$},
Assumption~\ref{assu2} and Lemma~\ref{lemma3}.
Next, the second term on the right-hand-side of \eqref{eqz12} can be bounded using Lemma~\ref{lemma5}:
\begin{align}
\Vert \hat{\bm{\mathrm{x}}}(k) -\alpha \sa{\left((\bm{1}_n\bm{1}_n^\top)\otimes I_p\right)} \nabla {f}(\hat{\bx}(k)) - \bm{\mathrm{x}}^{*} \Vert =& \sa{\Vert \bm{1}_n \otimes \Big(\hat{\bm{x}}(k)-\bm{x}^{*} -\alpha  \sum_{i=1}^n  \nabla {f_i}(\hat{\bm{x}}(k))\Big)\Vert}\nonumber \\
\leq &\sa{\eta\Vert\bm{1}_n \otimes \Big(\hat{\bm{x}}(k)-\bm{x}^*\Big)\Vert} =\eta \Vert \hat{\bm{\mathrm{x}}}(k) -\bm{\mathrm{x}}^{*} \Vert, \label{eqz13}
\end{align}
where $\eta=\max \left\lbrace  \vert 1-nL  \alpha  \vert, \vert 1-n\mu  \alpha \vert \right\rbrace$,  \sa{$\bm{\mathrm{x}}^{*}=\bm{1}_{n}\otimes\bm{x}^{*},$ and $\hat{\bm{x}}(k)\triangleq (\bm{\pi}^\top\otimes I_p)\bx(k)$, i.e., $\hat{\bx}(k)=\bm{1}_n\otimes \hat{\bm{x}}(k)$.}
\sa{Finally, Lemma~\ref{lemma7} follows from \eqref{eqz12}-\eqref{eqz13}.} 
\end{proof}
\end{lemma}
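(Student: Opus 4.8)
The plan is to reduce everything to a recursion in the consensus variable $\hat{\bm{\mathrm{x}}}(k)=V_\infty\bm{\mathrm{x}}(k)$ and then peel off a contracting part from two error terms. First I would left-multiply the $\bm{\mathrm{x}}$-update~\eqref{eqz6b} by $V_\infty$. Exactly as in the proof of Lemma~\ref{lemma6}(c), the identity $V_\infty R=V_\infty$ from Remark~\ref{rem:Vinf} annihilates the consensus part, while $V_\infty\bm{\mathrm{y}}(k)=\mathbf{0}_{np}$ annihilates the dual part; the latter holds because unrolling~\eqref{eqz6c} from $\bm{\mathrm{y}}(0)=\mathbf{0}_{np}$ gives $\bm{\mathrm{y}}(k)=\beta(I_{np}-R)\sum_{\ell=1}^k\bm{\mathrm{x}}(\ell)$ and $V_\infty(I_{np}-R)=\mathbf{0}$. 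This leaves the clean recursion $\hat{\bm{\mathrm{x}}}(k+1)=\hat{\bm{\mathrm{x}}}(k)-\alpha V_\infty\widetilde{V}^{-1}(k)\nabla f(\bm{\mathrm{x}}(k))$.

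Next, I would subtract $\bm{\mathrm{x}}^*$ and insert the idealized averaged-gradient term $(\mathbf{1}_n\mathbf{1}_n^\top\otimes I_p)\nabla f(\hat{\bm{\mathrm{x}}}(k))$, splitting by the triangle inequality into a \emph{descent} term $\|\hat{\bm{\mathrm{x}}}(k)-\alpha(\mathbf{1}_n\mathbf{1}_n^\top\otimes I_p)\nabla f(\hat{\bm{\mathrm{x}}}(k))-\bm{\mathrm{x}}^*\|$ and a \emph{gradient-perturbation} term $\alpha\|(\mathbf{1}_n\mathbf{1}_n^\top\otimes I_p)\nabla f(\hat{\bm{\mathrm{x}}}(k))-V_\infty\widetilde{V}^{-1}(k)\nabla f(\bm{\mathrm{x}}(k))\|$. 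For the descent term I would exploit the consensus structure $\hat{\bm{\mathrm{x}}}(k)=\mathbf{1}_n\otimes\hat{\bm{x}}(k)$ and $\bm{\mathrm{x}}^*=\mathbf{1}_n\otimes\bm{x}^*$, together with $(\mathbf{1}_n\mathbf{1}_n^\top\otimes I_p)\nabla f(\hat{\bm{\mathrm{x}}}(k))=\mathbf{1}_n\otimes\sum_{i\in\cV}\nabla f_i(\hat{\bm{x}}(k))$, so that the whole quantity factors as $\mathbf{1}_n\otimes\big(\hat{\bm{x}}(k)-\bm{x}^*-\alpha\sum_{i\in\cV}\nabla f_i(\hat{\bm{x}}(k))\big)$; Lemma~\ref{lemma5} (applicable because $0<\alpha<2/(nL)$) then contracts this single block by $\eta$, producing the first term $\eta\|\hat{\bm{\mathrm{x}}}(k)-\bm{\mathrm{x}}^*\|$.

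For the gradient-perturbation term I would add and subtract $V_\infty\widetilde{V}_\infty^{-1}\nabla f(\bm{\mathrm{x}}(k))$ and use the Kronecker identity $V_\infty\widetilde{V}_\infty^{-1}=(\mathbf{1}_n\mathbf{1}_n^\top)\otimes I_p$. The first resulting piece is $\|(\mathbf{1}_n\mathbf{1}_n^\top\otimes I_p)(\nabla f(\hat{\bm{\mathrm{x}}}(k))-\nabla f(\bm{\mathrm{x}}(k)))\|$, which $L$-smoothness (Remark~\ref{rem:muL-f}) and $\|(\mathbf{1}_n\mathbf{1}_n^\top)\otimes I_p\|=n$ bound by $nL\|\bm{\mathrm{x}}(k)-\hat{\bm{\mathrm{x}}}(k)\|\le nL\|\bm{\mathrm{x}}(k)-\hat{\bm{\mathrm{x}}}(k)\|_C$, using $\|\cdot\|\le\|\cdot\|_C$ from Lemma~\ref{lemma1}; the second piece is $\|V_\infty(\widetilde{V}^{-1}(k)-\widetilde{V}_\infty^{-1})\nabla f(\bm{\mathrm{x}}(k))\|\le v\,\tilde v^2\sqrt{n}\kappa_3\sigma_R^k\|\nabla f(\bm{\mathrm{x}}(k))\|$ by $\|V_\infty\|\le v$ and \eqref{eq:V-k-inf} in Lemma~\ref{lemma3}. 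Adding the three contributions gives the claim.

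The step I expect to be the crux is choosing the reference point to add and subtract: the insertion of $(\mathbf{1}_n\mathbf{1}_n^\top\otimes I_p)\nabla f(\hat{\bm{\mathrm{x}}}(k))$ must simultaneously collapse to the one-block form $\hat{\bm{x}}(k)-\alpha\sum_{i\in\cV}\nabla f_i(\hat{\bm{x}}(k))$, so that Lemma~\ref{lemma5} applies verbatim and yields the contraction factor $\eta$, and leave behind a remainder that cleanly separates into a smoothness-controlled consensus-error term and a geometrically decaying $\widetilde{V}^{-1}$-perturbation term. The accompanying Kronecker bookkeeping — especially verifying $V_\infty\widetilde{V}_\infty^{-1}=(\mathbf{1}_n\mathbf{1}_n^\top)\otimes I_p$ and that both $\hat{\bm{\mathrm{x}}}(k)$ and $\bm{\mathrm{x}}^*$ lie in the consensus subspace — is what ensures the $\eta$-contraction emerges without spurious dimension factors.
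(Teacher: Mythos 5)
Your proposal is correct and follows essentially the same route as the paper's proof: multiplying \eqref{eqz6b} by $V_\infty$ to obtain the recursion $\hat{\bm{\mathrm{x}}}(k+1)=\hat{\bm{\mathrm{x}}}(k)-\alpha V_\infty\widetilde{V}^{-1}(k)\nabla f(\bm{\mathrm{x}}(k))$, inserting the reference term $\left((\bm{1}_n\bm{1}_n^\top)\otimes I_p\right)\nabla f(\hat{\bm{\mathrm{x}}}(k))$ via the triangle inequality, contracting the consensus-block term by $\eta$ through Lemma~\ref{lemma5}, and splitting the perturbation term with $V_\infty\widetilde{V}_\infty^{-1}=(\bm{1}_n\bm{1}_n^\top)\otimes I_p$, smoothness, and Lemma~\ref{lemma3}. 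Your treatment of the Kronecker bookkeeping and the norm comparison $\norm{\cdot}\leq\norm{\cdot}_C$ matches the paper's argument exactly.
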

\begin{lemma}\label{lemma8}
The following inequality holds for all {$k\geq 0$:}
\begin{align*}
\Vert \bm{\mathrm{x}}&(k+1) -\bm{\mathrm{x}}(k)\Vert{_{R}} \\
 \leq &\quad \big( \sigma_{R}+\alpha (1+v)\kappa_3\tilde{v} L\big) \Vert \bm{\mathrm{x}}(k+1) -\bm{\mathrm{x}}(k)  \Vert{_{R}}+  \alpha (\beta {\kappa_{1}  \vertiii{I_{n}-R}}+\kappa_3 nL)\Vert \bm{\mathrm{x}}(k) -\hat{\bm{\mathrm{x}}}(k)\Vert{_{C}}\\
&\quad+ \alpha \kappa_3 n L \Vert \hat{\bm{\mathrm{x}}}(k)- {{\bf x}^{*}}\Vert +{\kappa_3^2}  \alpha(3v+2)   \tilde{v}^{2}{\sqrt{n}}{\sigma_R^{k}}~\Vert \nabla {f}(\bm{\mathrm{x}}(k))  \Vert.
\end{align*}
\end{lemma}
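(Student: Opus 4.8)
The plan is to derive a one-step recursion for the successive difference $\bm{d}(k+1)\triangleq\bm{\mathrm{x}}(k+1)-\bm{\mathrm{x}}(k)$ and then estimate it in the $\Vert\cdot\Vert_R$ norm term by term, feeding in the auxiliary bounds of Lemma~\ref{lemma4}. First I would write \eqref{eqz6b} at indices $k+1$ and $k$ and subtract; invoking the dual update \eqref{eqz6c} in the shifted form $\bm{\mathrm{y}}(k)-\bm{\mathrm{y}}(k-1)=\beta(I_{n}-R)\bm{\mathrm{x}}(k)$ eliminates $\bm{\mathrm{y}}$ and yields
\begin{equation*}
\bm{d}(k+1)=R\bm{d}(k)-\alpha\beta(I_{n}-R)\bm{\mathrm{x}}(k)-\alpha\big(\widetilde{V}^{-1}(k)\nabla f(\bm{\mathrm{x}}(k))-\widetilde{V}^{-1}(k-1)\nabla f(\bm{\mathrm{x}}(k-1))\big).
\end{equation*}
Since $V_\infty R=V_\infty$ (Remark~\ref{rem:Vinf}) gives $(I_{n}-R)\hat{\bm{\mathrm{x}}}(k)=(I_{n}-R)V_\infty\bm{\mathrm{x}}(k)=\mathbf{0}$, I may replace $(I_{n}-R)\bm{\mathrm{x}}(k)$ by $(I_{n}-R)(\bm{\mathrm{x}}(k)-\hat{\bm{\mathrm{x}}}(k))$; this is what produces the consensus-error term $\Vert\bm{\mathrm{x}}(k)-\hat{\bm{\mathrm{x}}}(k)\Vert_C$ in the final bound.

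The crux is the leading term $R\bm{d}(k)$, which must surrender the contraction factor $\sigma_R$. Rather than applying \eqref{eqz7} to $\bm{d}(k)$ directly (which would leave an unwanted projected residual), I would split $R\bm{d}(k)=(R-V_\infty)\bm{d}(k)+V_\infty\bm{d}(k)$ and bound the first piece by $\vertiii{R-V_\infty}\,\Vert\bm{d}(k)\Vert_R=\sigma_R\Vert\bm{d}(k)\Vert_R$, using Remark~\ref{rem:sigma_R} which identifies $\sigma_R$ with the induced matrix norm $\vertiii{R-V_\infty}$. The second piece is exactly $V_\infty\bm{d}(k)=\hat{\bm{\mathrm{x}}}(k)-\hat{\bm{\mathrm{x}}}(k-1)$, so after $\Vert\cdot\Vert_R\le\kappa_3\Vert\cdot\Vert$ I can invoke Lemma~\ref{lemma4}(c). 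This single step is the source of the extra $\alpha\kappa_3 v\tilde{v}L$ contribution to the coefficient of $\Vert\bm{d}(k)\Vert_R$ (yielding the $(1+v)$ factor), of the $\alpha\kappa_3 nL\Vert\hat{\bm{\mathrm{x}}}(k)-\bm{\mathrm{x}}^*\Vert$ term, and of the $3v$ part of the $\sigma_R^k$ gradient term.

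The remaining pieces are routine bookkeeping. For the gradient-difference term I would pass through $\Vert\cdot\Vert_R\le\kappa_3\Vert\cdot\Vert$ and apply Lemma~\ref{lemma4}(d), contributing $\alpha\kappa_3\tilde{v}L$ to the coefficient of $\Vert\bm{d}(k)\Vert_R$ and the remaining $2$ in the $(3v+2)$ factor of the $\sigma_R^k$ term. For the consensus term I would use $\Vert(I_{n}-R)(\bm{\mathrm{x}}(k)-\hat{\bm{\mathrm{x}}}(k))\Vert_R\le\vertiii{I_{n}-R}\,\Vert\bm{\mathrm{x}}(k)-\hat{\bm{\mathrm{x}}}(k)\Vert_R$ followed by $\Vert\cdot\Vert_R\le\kappa_1\Vert\cdot\Vert_C$, giving the $\alpha\beta\kappa_1\vertiii{I_{n}-R}$ coefficient. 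Collecting the coefficient of $\Vert\bm{d}(k)\Vert_R$ as $\sigma_R+\alpha\kappa_3\tilde{v}L(1+v)$ then reproduces the stated leading factor, and gathering the $\nabla f$ contributions gives $\kappa_3^2\alpha(3v+2)\tilde{v}^2\sqrt{n}\sigma_R^k$.

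I expect the only genuine obstacle to be this decomposition of $R\bm{d}(k)$: one must avoid the naive use of \eqref{eqz7} and instead peel off $V_\infty\bm{d}(k)$, recognizing it as the successive difference $\hat{\bm{\mathrm{x}}}(k)-\hat{\bm{\mathrm{x}}}(k-1)$ of the network averages already controlled by Lemma~\ref{lemma4}(c); any other handling inflates the $(1+v)$ coefficient. I also note that, for the recursion to be meaningful, the first term on the right-hand side of the statement should read $\Vert\bm{\mathrm{x}}(k)-\bm{\mathrm{x}}(k-1)\Vert_R$, which is precisely $\Vert\bm{d}(k)\Vert_R$ above.
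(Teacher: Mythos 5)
Your proposal is correct and follows essentially the same route as the paper's proof: the same difference recursion with $\bm{\mathrm{y}}$ eliminated via \eqref{eqz6c} and \eqref{eqz6b}, the same splitting $R\bm{d}(k)=(R-V_\infty)\bm{d}(k)+V_\infty\bm{d}(k)$ bounded via Remark~\ref{rem:sigma_R} and Lemma~\ref{lemma4}(c), Lemma~\ref{lemma4}(d) for the gradient difference, and the identity $(I_{n}-R)V_\infty=\mathbf{0}$ with $\vertiii{I_{n}-R}$ and $\kappa_1$ for the momentum term. Your closing observation is also right: the paper's own proof (and its use in the system \eqref{eqz19}) confirms the right-hand side should read $\Vert\bm{\mathrm{x}}(k)-\bm{\mathrm{x}}(k-1)\Vert_R$, so the statement as printed contains an index typo.
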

\begin{proof}
{We use \eqref{eqz6b} and \eqref{eqz6c} for rewriting $\bm{\mathrm{x}}(k+1)$ and $\bm{\mathrm{y}}(k)$ respectively, to derive the first two equations:}
\begin{align}
&\quad \Vert \bm{\mathrm{x}}(k+1) -\bm{\mathrm{x}}(k)\Vert{_{R}} \label{eqz15}\\
= &\quad\Vert R\bm{\mathrm{x}}(k)-\alpha \bm{\mathrm{y}}(k) -\alpha \widetilde{V}^{-1}(k) \nabla{f}(\bm{\mathrm{x}}(k)) -\bm{\mathrm{x}}(k)\Vert{_{R}} \nonumber\\
= &\quad \Vert R\bm{\mathrm{x}}(k)-\alpha\bm{\mathrm{y}}(k-1)-\alpha\beta(I_{n}-R)\bm{\mathrm{x}}(k)-\alpha \widetilde{V}^{-1}(k) \nabla{f}(\bm{\mathrm{x}}(k)) -\bm{\mathrm{x}}(k)\Vert{_{R}} \nonumber\\
= &\quad \Vert R(\bm{\mathrm{x}}(k)-\bm{\mathrm{x}}(k-1))+\alpha \widetilde{V}^{-1}(k-1) \nabla {f}(\bm{\mathrm{x}}(k-1)) -\alpha\beta(I_{n}-R)\bm{\mathrm{x}}(k)\nonumber -\alpha \widetilde{V}^{-1}(k) \nabla{f}(\bm{\mathrm{x}}(k)) \Vert{_{R}} \nonumber\\
\leq &\quad \Vert R\bm{\mathrm{x}}(k)-R\bm{\mathrm{x}}(k-1) -\hat{\bm{\mathrm{x}}}(k)+\hat{\bm{\mathrm{x}}}(k-1) \Vert{_{R}}   + \kappa_3\Vert \hat{\bm{\mathrm{x}}}(k)-\hat{\bm{\mathrm{x}}}(k-1) \Vert+ \alpha\beta \Vert(I_{n}-R)\bm{\mathrm{x}}(k) \Vert{_R}  \nonumber\\
&\quad +\alpha \kappa_3\Vert \widetilde{V}^{-1}(k) \nabla {f}(\bm{\mathrm{x}}(k)) -\widetilde{V}^{-1}(k-1) \nabla{f}(\bm{\mathrm{x}}(k-1)) \Vert  \nonumber
\end{align}
where in the third equation, we use \eqref{eqz6b} {to get rid of the term $-\alpha\bm{\mathrm{y}}(k-1)$ as we did previously to derive \eqref{eqz11}}.
{We bound the first term above using Remark~\ref{rem:sigma_R}, i.e.,}
\begin{align}
&\quad\Vert R\bm{\mathrm{x}}(k)-R\bm{\mathrm{x}}(k-1) -\hat{\bm{\mathrm{x}}}(k)+\hat{\bm{\mathrm{x}}}(k-1) \Vert{_{R}}  \nonumber\\
 = &\quad\Vert (R-V_{\infty})(\bm{\mathrm{x}}(k)-\bm{\mathrm{x}}(k-1)) \Vert{_{R}}\leq \sigma_{R}\Vert \bm{\mathrm{x}}(k)-\bm{\mathrm{x}}(k-1)  \Vert{_{R}}.  \label{eqz16}
\end{align}
We can use Lemma~\ref{lemma4}~(c) to bound $\Vert \hat{\bm{\mathrm{x}}}(k)-\hat{\bm{\mathrm{x}}}(k-1)\Vert$,
and Lemma~\ref{lemma4}~(d) to bound the fourth term. {Then, the remaining third term in \eqref{eqz15} can be bounded as}
\begin{align*}
 \Vert(I_{n}-R)\bm{\mathrm{x}}(k) \Vert_{R}\quad
 &= \quad \Vert(I_{n}-R)(\bm{\mathrm{x}}(k)-\hat{\bm{\mathrm{x}}}(k)) \Vert_{R} \nonumber\\
&\leq \quad  \vertiii{I_{n}-R} \Vert\bm{\mathrm{x}}(k)-\hat{\bm{\mathrm{x}}}(k) \Vert_{R} \nonumber\\
&\leq \quad  \kappa_{1}  \vertiii{I_{n}-R} \Vert\bm{\mathrm{x}}(k)-\hat{\bm{\mathrm{x}}}(k)  \Vert_{C}, 
\end{align*}
where the fist equality {follows from $(I_{n}-R)V_{\infty}=\mathbf{0}$ due to $RV_{\infty}=V_{\infty}$; hence, we can add $(I_{n}-R)\hat{\bm{\mathrm{x}}}(k)$ to $(I_{n}-R)\bm{\mathrm{x}}(k)$. Combining all bounds {gives} the desired result.} 
\end{proof}

Combining the results of Lemmas~\ref{lemma6},~\ref{lemma7} and~\ref{lemma8}, we will construct a linear dynamical system prove the linear convergence of the proposed algorithm.
{For the sake of notational simplicity, we define some constants below:}
\sa{
\begin{align*}
s_{1} &{\triangleq} \kappa_{4}n L, &
s_{2} &{\triangleq} \kappa_{4}\tilde{v} L, &
s_{3} &{\triangleq} n L,\\
s_{4} &{\triangleq} \kappa_3 nL, &
s_{5} &{\triangleq} \kappa_3 (1+v)\tilde{v} L, &
s(\beta) &\triangleq \beta\kappa_1\vertiii{I_{n}-R},\\
\sa{p_{1}} &{\triangleq} {{\kappa_3}\kappa_{4}(2+v)\tilde{v}^{2}{\sqrt{n}}}, &
\sa{p_{2}} &{\triangleq} {\kappa_3}{v\tilde{v}^{2}{\sqrt{n}}}, &
\sa{p_{3}} &{\triangleq} {{\kappa_3^2} (3v+2)\tilde{v}^{2}{\sqrt{n}}}.
\end{align*}}%
{For $\alpha\in(0,\frac{2}{nL})$ and $\beta>0$ such that $\alpha\beta<1$, FRSD sequence $\{\bm{\mathrm{x}}(k)\}_{k\geq 0}$ satisfies the following system:}
\begin{align}\label{eqz19}
\theta_{k+1}\leq  \sa{\Upsilon_{\alpha,\beta}}~\theta_{k}+ \Phi_{k} \Psi_{k},\quad \forall~k\geq 0,
\end{align}
{where $\theta_k,\Phi_k$, $\Psi_{k}$ and \sa{$\Upsilon_{\alpha,\beta}\triangleq\Upsilon_1(\alpha,\beta)+\alpha\Upsilon_2$} are defined as}
{
\begin{align*}
&\theta_{k}=
\begin{bmatrix}
\Vert \bm{\mathrm{x}}(k) -\hat{\bm{\mathrm{x}}}(k)  \Vert_{C} \\
 \Vert \hat{\bm{\mathrm{x}}}(k)- \bm {x}^{*}\Vert  \\
\Vert\bm{\mathrm{x}}(k)-\bm{\mathrm{x}}(k-1)\Vert_{R}
\end{bmatrix},\qquad
\sa{\Phi_{k}= \sigma_R^{k}\alpha
\begin{bmatrix}
p_{1} & 0 & 0\\
p_{2} & 0 & 0\\
p_{3} & 0 & 0
\end{bmatrix},}\qquad
\Psi_{k}=
\begin{bmatrix}
\Vert  \nabla {f}(\bm{\mathrm{x}}(k)) \Vert \\
 0  \\
0
\end{bmatrix},\\
&\sa{\Upsilon_1(\alpha,\beta)=
\begin{bmatrix}
\sigma_{C}& 0 & \kappa_{2}\vertiii{R}\\
0 & 1 & 0 \\
\alpha s(\beta) & 0 & \sigma_{R}
\end{bmatrix},}\qquad 
\sa{\Upsilon_2=
\begin{bmatrix}
s_{1}& s_{1} & s_{2}\\
s_{3} & -n\mu & 0 \\
s_{4} & s_{4} & s_{5}
\end{bmatrix}.}\\
\end{align*}}%
\begin{theorem}\label{theorem1}
Suppose Assumptions~\ref{assu1}-\ref{assu3} holds. Let $\alpha,\beta>0$ such that 
{$\alpha\in(0,\bar{\alpha})$} and $\alpha\beta<1$, where \sa{$\bar{\alpha}>0$ is defined as}
{
\begin{align}
&\bar{\alpha}\triangleq 
{\sup_{\delta_1,\delta_2}}\min\left\lbrace {\frac{(1-\sigma_{C})-\kappa_{2}\vertiii{R}\delta_{2}}{s_{1}(1+\delta_{1})+s_{2}\delta_{2}}},{\frac{(1-\sigma_{R})\delta_{2}}{s_{4}(1+\delta_{1})+s_{5}\delta_{2}\sa{+s(\beta)}}},\frac{1}{nL}\right\rbrace \nonumber\\
&\quad\hbox{s.t.}\qquad \dfrac{L}{\mu}<\delta_{1}, \qquad 0 <\delta_{2} < \dfrac{1-\sigma_{c}}{\kappa_{2}\vertiii{R}}. \label{eq:gamma_condition}
\end{align}}%
{Then, the spectral radius \sa{satisfies} $\rho(\sa{\Upsilon_{\alpha,\beta}})<1$.} 
\end{theorem}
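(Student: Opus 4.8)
The plan is to exploit the fact that, once $\alpha$ is small enough, $\Upsilon_{\alpha,\beta}$ is an entrywise \emph{nonnegative} matrix, and then to invoke the classical Perron--Frobenius bound (e.g.\ \cite[Cor.~8.1.29]{horn2012matrix}): for $A\geq 0$ and any $\bm{v}>\mathbf{0}$ one has $\rho(A)\leq \max_i (A\bm{v})_i/v_i$, so that producing a single strictly positive $\bm{v}$ with $A\bm{v}<\bm{v}$ (componentwise) already forces $\rho(A)<1$. Thus the whole theorem reduces to exhibiting such a test vector for $A=\Upsilon_{\alpha,\beta}$, and the specific form of $\bar\alpha$ in \eqref{eq:gamma_condition} is precisely what makes this possible.

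First I would verify nonnegativity of $\Upsilon_{\alpha,\beta}=\Upsilon_1(\alpha,\beta)+\alpha\Upsilon_2$. Every entry except the $(2,2)$ one is manifestly nonnegative for $\alpha>0$; the $(2,2)$ entry equals $1-\alpha n\mu$, which is the only place where the negative entry $-n\mu$ of $\Upsilon_2$ enters. Here the term $\tfrac{1}{nL}$ inside the $\min$ defining $\bar\alpha$ is crucial: since $\alpha<\bar\alpha\leq \tfrac1{nL}$ and $\mu\leq L$, we get $\alpha n\mu\leq \mu/L\leq 1$, hence $1-\alpha n\mu\geq 0$. (This same bound also guarantees $\eta=1-n\mu\alpha$, consistent with the $(2,2)$ entry being $1-\alpha n\mu$, and ensures $\alpha<\tfrac{2}{nL}$ so that the system \eqref{eqz19} and Lemma~\ref{lemma5} apply.) Consequently $\Upsilon_{\alpha,\beta}\geq 0$ throughout the admissible range of $\alpha$.

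Next I would build the test vector directly from the optimization variables appearing in $\bar\alpha$. Given any $\alpha<\bar\alpha$, by definition of the supremum there exist feasible $\delta_1,\delta_2$ with $\tfrac{L}{\mu}<\delta_1$ and $0<\delta_2<\tfrac{1-\sigma_C}{\kappa_2\vertiii{R}}$ for which $\alpha$ is strictly smaller than each of the three quantities inside the $\min$. I take $\bm{v}=(1,\delta_1,\delta_2)^\top>\mathbf{0}$ and compute $\Upsilon_{\alpha,\beta}\bm{v}$ row by row. The first row gives the requirement $\alpha\big(s_1(1+\delta_1)+s_2\delta_2\big)<(1-\sigma_C)-\kappa_2\vertiii{R}\delta_2$, i.e.\ the first fraction in $\bar\alpha$ (its numerator being positive exactly under $\delta_2<\tfrac{1-\sigma_C}{\kappa_2\vertiii{R}}$); the third row gives $\alpha\big(s_4(1+\delta_1)+s_5\delta_2+s(\beta)\big)<(1-\sigma_R)\delta_2$, i.e.\ the second fraction; and the second row collapses, after cancelling $\delta_1$, to $\alpha s_3<\alpha n\mu\,\delta_1$, i.e.\ $nL<n\mu\,\delta_1$, which is the $\alpha$-independent constraint $\tfrac{L}{\mu}<\delta_1$. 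All three strict inequalities hold by the choice of $\alpha,\delta_1,\delta_2$, so $\Upsilon_{\alpha,\beta}\bm{v}<\bm{v}$ componentwise, and the Perron--Frobenius bound yields $\rho(\Upsilon_{\alpha,\beta})<1$.

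The genuinely delicate points, rather than the (routine) row-wise algebra, are two bookkeeping matters. One is the nonnegativity argument above: the negative $-n\mu$ must be absorbed into a positive $(2,2)$ entry, which is exactly why $\bar\alpha$ is capped by $\tfrac1{nL}$; without this one could not invoke the nonnegative-matrix machinery. The other is correctly reading the $\sup_{\delta_1,\delta_2}\min\{\cdot\}$ structure: the theorem asserts $\rho<1$ for \emph{every} $\alpha<\bar\alpha$, so for each such $\alpha$ one must first select a witnessing pair $(\delta_1,\delta_2)$ before fixing the test vector, rather than fixing $\bm{v}$ in advance. Once these are handled, the conclusion is immediate.
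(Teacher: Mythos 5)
Your proposal is correct and follows essentially the same route as the paper's own proof: both rely on the fact that a nonnegative matrix $A$ with $A\bm{v}<\bm{v}$ for some $\bm{v}>\mathbf{0}$ satisfies $\rho(A)<1$ (\cite[Cor.~8.1.29]{horn2012matrix}), both use the cap $\alpha<\tfrac{1}{nL}$ to write the $(2,2)$ entry as $1-\alpha n\mu\geq 0$, and both take the test vector proportional to $(1,\delta_1,\delta_2)^\top$ with a witnessing feasible pair $(\delta_1,\delta_2)$, reducing the claim to the same three row-wise inequalities encoded in $\bar\alpha$. Your explicit verification of entrywise nonnegativity and of the $\sup$--$\min$ bookkeeping merely spells out steps the paper leaves implicit.
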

\begin{proof}
{Given $\alpha\in(0,\frac{2}{nL})$ and $\beta>0$ such that $\alpha\beta<1$, it follows from Lemmas~\ref{lemma6}-\ref{lemma8} that \eqref{eqz19} holds for $k\geq 0$.} Next, we show $\rho(\Upsilon_{\alpha,\beta})<1$.
{Since $\Upsilon_{\alpha,\beta}$ has all non-negative entries, 
it is sufficient to} show that $\Upsilon_{\alpha,\beta}~ \gamma< \gamma$ {for some positive $\gamma=\left[\gamma_{1},\gamma_{2},\gamma_{3} \right]^{\top} \in \mathbb{R}^{3}_+$ --}see \cite[Corollary~8.1.29]{horn2012matrix}. Since {$L \geq \mu$}, {according to the definition of $\eta$ 
in Lemma~\ref{lemma5},} $\eta = 1-\alpha n \mu$ for $\alpha \in (0,\frac{1}{nL})$.
Hence, $\Upsilon_{\alpha,\beta}~\gamma< \gamma$ {is equivalent to}
\begin{subequations}\label{eqqz22}
\begin{align}
&(s_{1}\gamma_{1}+s_{1}\gamma_{2}+s_{2}\gamma_{3}) \alpha < \gamma_{1}(1-\sigma_{C}) - \kappa_{2}\vertiii{R}\gamma_{3}, \label{eqqz22a}\\
&s_{3}\gamma_{1}\alpha -\gamma_{2}n \mu \alpha <0, \label{eqqz22b}\\
&\big((s_{4}\sa{+s(\beta)})\gamma_{1}+\sa{s_{4}}\gamma_{2}+\sa{s_{5}}\gamma_{3}\big) \alpha < \gamma_{3}(1-\sigma_{R}). \label{eqqz22c}
\end{align}
\end{subequations}
{Clearly, \eqref{eqqz22} holds for all $\alpha\in(0,\bar{\alpha})$ and $\gamma\in\reals^3$ such that $\gamma_2=\delta_1\gamma_1$ and $\gamma_3=\delta_2\gamma_1$ for any $\gamma_1>0$ and $\delta_1,\delta_2>0$ satisfying \eqref{eq:gamma_condition}; thus, we get $\rho(\Upsilon_{\alpha,\beta})<1$.}\qed
\end{proof}
\begin{remark}
\label{rem:alpha-bound}
\sa{Note {$\delta_{1}$ and $\delta_{2}$} 
are only required to satisfy \eqref{eq:gamma_condition}.}
{To provide a lower bound on an admissible $\alpha$, we compute a lower bound on $\bar{\alpha}$ by setting $\delta_2=\frac{1-\sigma_C}{2\kappa_2 \vertiii{R}}$ satisfying \eqref{eq:gamma_condition}. The supremum over $\delta_1$ subject to  \eqref{eq:gamma_condition} is achieved at $\delta_1=\frac{L}{\mu}$. For this particular choice we get $\bar{\alpha}<\frac{1}{nL}$ and $\bar{\alpha}\geq \min\{\alpha_1, \alpha_2\}$, where}
{
\begin{align*}
    \alpha_1\triangleq& \Big[\frac{2\kappa_4}{1-\sigma_C}(\frac{L}{\mu}+1)nL+\frac{\kappa_4}{\kappa_2}\tilde{v}L\Big]^{-1},\\
    \alpha_2\triangleq &(1-\sigma_R)\Big[\frac{\kappa_2\kappa_3\vertiii{R}}{1-\sigma_C}(\frac{L}{\mu}+1)n L+\frac{\kappa_1\kappa_2\vertiii{R}\vertiii{I-R}\beta}{1-\sigma_C}
   +\kappa_3(1+v)\tilde{v}L\Big]^{-1},
\end{align*}}%
{where we used $1=\rho(R)\leq \vertiii{R}$.}
\end{remark}

Finally, in the next theorem, we prove {that FRSD iterate sequence converges \rev{with a linear rate} 
through showing a linear decay for $\{\Phi_{k}\}$. First, we state a classic result that will be useful in our analysis; for its proof, see \cite{Robbins71,polyak1987introduction}.}
\begin{lemma}\label{lemma9}
Let $\lbrace a_{k}\rbrace$, $\lbrace b_{k}\rbrace$, $\lbrace c_{k}\rbrace$ and $\lbrace d_{k}\rbrace$ be non-negative sequences such that $\sum\limits_{k=0}^{\infty}c_{k}<\infty$, $\sum\limits_{k=0}^{\infty}d_{k}<\infty$, and
\begin{align*}
a_{k+1}\leq (1+c_{k})a_{k}-b_{k}+ d_{k},\quad \forall~k\geq 0.
\end{align*}
Then $\{a_{k}\}$ converges and $\sum\limits_{k=0}^{\infty}b_{k}< \infty$. 
\end{lemma}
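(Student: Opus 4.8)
The plan is to reduce the perturbed inequality to the clean form one would have if $c_k\equiv 0$, by absorbing the multiplicative factors $(1+c_k)$ into a normalizing product, and then to exhibit an auxiliary monotone sequence whose convergence delivers both conclusions at once.

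First I would define the partial products $P_k\triangleq\prod_{j=0}^{k-1}(1+c_j)$ with $P_0\triangleq 1$. Since $\log(1+c_j)\leq c_j$ and $\sum_k c_k<\infty$, the series $\sum_k \log(1+c_k)$ converges, so $P_k$ increases to a finite limit $P_\infty\in[1,\infty)$; in particular $1\leq P_k\leq P_\infty$ for every $k$. Dividing the recursion by $P_{k+1}=(1+c_k)P_k$ and setting $\tilde a_k\triangleq a_k/P_k$, $\tilde b_k\triangleq b_k/P_{k+1}$, $\tilde d_k\triangleq d_k/P_{k+1}$, the factor $(1+c_k)$ cancels exactly and the recursion becomes the clean supermartingale-type bound
\[
\tilde a_{k+1}\leq \tilde a_k-\tilde b_k+\tilde d_k,\qquad \forall~k\geq 0,
\]
with all four normalized sequences non-negative and, because $P_{k+1}\geq 1$, with $\sum_k\tilde d_k\leq\sum_k d_k<\infty$.

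The key step is to construct the sequence $u_k\triangleq\tilde a_k+\sum_{j=k}^{\infty}\tilde d_j$, which is well-defined since the tail of the convergent series $\sum_j\tilde d_j$ is finite. Substituting the clean recursion gives $u_{k+1}\leq u_k-\tilde b_k\leq u_k$, so $\{u_k\}$ is non-increasing; being bounded below by $0$, it converges to some $u_\infty\geq 0$. As the tail $\sum_{j=k}^{\infty}\tilde d_j\to 0$, it follows that $\tilde a_k=u_k-\sum_{j\geq k}\tilde d_j\to u_\infty$, whence $a_k=P_k\tilde a_k\to P_\infty u_\infty$, proving convergence of $\{a_k\}$. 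Telescoping $\tilde b_k\leq u_k-u_{k+1}$ over $k=0,\ldots,N$ yields $\sum_{k=0}^{N}\tilde b_k\leq u_0-u_{N+1}\leq u_0$, so $\sum_k\tilde b_k<\infty$; multiplying by the uniform bound $P_{k+1}\leq P_\infty$ gives $\sum_k b_k\leq P_\infty\sum_k\tilde b_k<\infty$.

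I do not anticipate a genuine obstacle, as this is the deterministic special case of the Robbins--Siegmund theorem; the only subtlety is that the exact normalization by $P_k$ (rather than a crude estimate such as $a_{k+1}\leq e^{c_k}a_k-b_k+d_k$) is what lets the $(1+c_k)$ factors telescope cleanly, and one must check $\sum_j\tilde d_j<\infty$ before forming the tail sum defining $u_k$. If one prefers to avoid the tail-sum construction, an equivalent route is to set $w_k\triangleq\tilde a_k-\sum_{j=0}^{k-1}\tilde d_j$, which is non-increasing and bounded below by $-\sum_j\tilde d_j$, but the tail-sum version exhibits the limit of $\tilde a_k$ more directly.
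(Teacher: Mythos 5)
Your proof is correct. The paper does not prove this lemma inline --- it simply cites the deterministic Robbins--Siegmund theorem \cite{Robbins71,polyak1987introduction} --- and your argument (normalizing by the partial products $P_k=\prod_{j<k}(1+c_j)$, which converge since $\sum_k c_k<\infty$, and then applying monotone convergence to the tail-augmented sequence $u_k=\tilde a_k+\sum_{j\geq k}\tilde d_j$) is precisely the standard proof given in those references, so it matches the approach the paper relies on.
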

\begin{theorem}\label{theorem2}
Let Assumptions~\ref{assu1}-\ref{assu3} hold. \sa{For any 
step-size $\alpha\in(0, \bar{\alpha})$, {the sequence $\{\bm{\mathrm{x}}(k)\}$} 
\rev{converges with a linear rate} to $\bx^{*}=\bm{1}_n\otimes\bm{x}^*$ with a rate arbitrarily close to $\varphi_{\alpha,\beta}\triangleq\max\{\rho(\Upsilon_{\alpha,\beta}), \sigma_R\}\in(0,1)$,} where $\bar{\alpha}$ is defined in the Theorem \ref{theorem1}, \sa{and $\sigma_R=\vertiii{\Big(\overline{R}-(\bm{1}_n\bm{\pi}^\top)\Big)\otimes I_p}<1$.}
\end{theorem}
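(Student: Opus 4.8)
The plan is to read the linear system of inequalities~\eqref{eqz19} as a perturbed linear recursion whose forcing term decays at the geometric rate $\sigma_R^k$, and then to argue in two stages: first establish $\theta_k\to 0$ via the deterministic supermartingale result (Lemma~\ref{lemma9}), and only afterwards upgrade this to the sharp linear rate using $\rho(\Upsilon_{\alpha,\beta})<1$ from Theorem~\ref{theorem1}. The first step is to eliminate the factor $\Vert \nabla f(\bm{\mathrm{x}}(k))\Vert$ appearing in $\Psi_k$, which is not bounded a priori. Using that $f$ is $L$-smooth (Remark~\ref{rem:muL-f}) together with $\Vert\cdot\Vert\le\Vert\cdot\Vert_C$ (Lemma~\ref{lemma1}), I would bound
\begin{align*}
\Vert \nabla f(\bm{\mathrm{x}}(k))\Vert\le L\Vert \bm{\mathrm{x}}(k)-\bm{\mathrm{x}}^*\Vert+G_0\le L\big([\theta_k]_1+[\theta_k]_2\big)+G_0,
\end{align*}
where $[\theta_k]_i$ denotes the $i$-th entry of $\theta_k$ and $G_0\triangleq\Vert \nabla f(\bm{\mathrm{x}}^*)\Vert<\infty$ is a fixed constant (it need not vanish, since optimality only gives $(\bm{1}_n^\top\otimes I_p)\nabla f(\bm{\mathrm{x}}^*)=\mathbf{0}$). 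Substituting this into $\Phi_k\Psi_k$ turns~\eqref{eqz19} into the componentwise inequality
\begin{align*}
\theta_{k+1}\le\big(\Upsilon_{\alpha,\beta}+\sigma_R^k M\big)\theta_k+\sigma_R^k\bm{b},\qquad\forall~k\ge 0,
\end{align*}
for the fixed nonnegative matrix $M\triangleq\alpha L\,[p_1,p_2,p_3]^\top[1,1,0]$ and the fixed nonnegative vector $\bm{b}\triangleq\alpha G_0\,[p_1,p_2,p_3]^\top$.

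Next, I would reduce this vector recursion to a scalar one. By the proof of Theorem~\ref{theorem1} there exists a strictly positive $\gamma\in\reals^3$ with $\Upsilon_{\alpha,\beta}\gamma<\gamma$ componentwise; set $\lambda\triangleq\max_i[\Upsilon_{\alpha,\beta}\gamma]_i/\gamma_i\in(0,1)$ and consider the weighted max-norm $\Vert \bm{z}\Vert_\gamma\triangleq\max_i|z_i|/\gamma_i$, which is monotone and subadditive on the nonnegative orthant and satisfies $\Vert \Upsilon_{\alpha,\beta}\bm{z}\Vert_\gamma\le\lambda\Vert \bm{z}\Vert_\gamma$ for $\bm{z}\ge\mathbf{0}$. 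Writing $a_k\triangleq\Vert \theta_k\Vert_\gamma$ and applying $\Vert\cdot\Vert_\gamma$ to the displayed recursion yields
\begin{align*}
a_{k+1}\le(1+c_k)a_k-(1-\lambda)a_k+d_k,\qquad c_k\triangleq\sigma_R^k\Vert M\Vert_\gamma,\quad d_k\triangleq\sigma_R^k\Vert \bm{b}\Vert_\gamma,
\end{align*}
where $\sum_k c_k<\infty$ and $\sum_k d_k<\infty$ because $\sigma_R\in(0,1)$. Lemma~\ref{lemma9}, applied with $b_k\triangleq(1-\lambda)a_k\ge 0$, then gives that $\{a_k\}$ converges and $\sum_k a_k<\infty$; in particular $a_k\to 0$, so $\theta_k\to\mathbf{0}$ and $\{\theta_k\}$ is bounded.

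With $\{\theta_k\}$ bounded, the gradient bound above shows $\Vert \nabla f(\bm{\mathrm{x}}(k))\Vert\le\bar G$ for some constant $\bar G<\infty$, so the forcing term is now purely geometric: $\theta_{k+1}\le\Upsilon_{\alpha,\beta}\theta_k+\sigma_R^k\bm{w}$ with $\bm{w}\triangleq\alpha\bar G\,[p_1,p_2,p_3]^\top$. Unrolling gives $\theta_k\le\Upsilon_{\alpha,\beta}^k\theta_0+\sum_{j=0}^{k-1}\Upsilon_{\alpha,\beta}^{\,k-1-j}\sigma_R^j\bm{w}$. Fixing $\epsilon>0$ with $\rho_\epsilon\triangleq\rho(\Upsilon_{\alpha,\beta})+\epsilon<1$, the standard fact $\Vert \Upsilon_{\alpha,\beta}^m\Vert\le C_\epsilon\rho_\epsilon^m$ and the geometric convolution estimate $\sum_{j=0}^{k-1}\rho_\epsilon^{k-1-j}\sigma_R^j=\cO\big(k\,\max\{\rho_\epsilon,\sigma_R\}^k\big)$ yield $\Vert \theta_k\Vert=\cO\big((\varphi_{\alpha,\beta}+\cO(\epsilon))^k\big)$, with $\varphi_{\alpha,\beta}=\max\{\rho(\Upsilon_{\alpha,\beta}),\sigma_R\}$. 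Since $\Vert \bm{\mathrm{x}}(k)-\bm{\mathrm{x}}^*\Vert\le\Vert \bm{\mathrm{x}}(k)-\hat{\bm{\mathrm{x}}}(k)\Vert+\Vert \hat{\bm{\mathrm{x}}}(k)-\bm{\mathrm{x}}^*\Vert\le[\theta_k]_1+[\theta_k]_2$, I conclude $\bm{\mathrm{x}}(k)\to\bm{\mathrm{x}}^*$ at a rate arbitrarily close to $\varphi_{\alpha,\beta}$, as $\epsilon>0$ is arbitrary.

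The hard part is exactly the handling of the unbounded forcing $\Vert \nabla f(\bm{\mathrm{x}}(k))\Vert$: because of it, \eqref{eqz19} is not yet a linear system with geometric forcing, and absorbing the gradient into a $\theta_k$-proportional perturbation of $\Upsilon_{\alpha,\beta}$ is what forces the two-stage structure. Note that the qualitative step alone cannot deliver the claimed rate $\rho(\Upsilon_{\alpha,\beta})$, since the weighted-max-norm factor $\lambda$ only dominates it; the sharp rate must be extracted from the clean geometric recursion after boundedness is known. The remaining delicate point is the convolution bookkeeping producing $\max\{\rho(\Upsilon_{\alpha,\beta}),\sigma_R\}$, whose borderline case $\rho(\Upsilon_{\alpha,\beta})=\sigma_R$ contributes an extra polynomial factor and is precisely why the rate can only be guaranteed \emph{arbitrarily close} to $\varphi_{\alpha,\beta}$.
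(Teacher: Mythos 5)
Your proof is correct, and although it rests on the same two pillars as the paper's---Lemma~\ref{lemma9} for the qualitative step and $\rho(\Upsilon_{\alpha,\beta})<1$ from Theorem~\ref{theorem1} for the rate---the mechanics run in the opposite order, so the comparison is worth recording. The paper unrolls the raw system \eqref{eqz19} \emph{first}, using $\Vert \Upsilon_{\alpha,\beta}^{k}\Vert \le \Gamma\tilde{\lambda}^{k}$ and $\Vert \Upsilon_{\alpha,\beta}^{k-j-1}\Phi_{j}\Vert \le \Gamma\tilde{\lambda}^{k}$ (the $\max\{\rho(\Upsilon_{\alpha,\beta}),\sigma_R\}$ convolution bookkeeping is absorbed into this second bound), bounds $\Vert\Psi_j\Vert \le 2L\Vert\theta_j\Vert+\Vert\nabla f(\bm{\mathrm{x}}^*)\Vert$ exactly as you do, and only then scalarizes: Lemma~\ref{lemma9} is applied with $b_k\equiv 0$ to the partial sums $a_k=\sum_{j<k}\Vert\theta_j\Vert$, exploiting $\Vert\theta_k\Vert=a_{k+1}-a_k$; once $\{a_k\}$ is known to be bounded, the linear rate drops out of the already-unrolled inequality $\Vert\theta_k\Vert\le(\tilde{c}a_k+\tilde{d}_k)\tilde{\lambda}^k$, the linearly growing prefactor being eaten by the $\zeta$ slack, so no second pass over the recursion is needed. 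You scalarize first: you fold the gradient term into the vanishing perturbation $\Upsilon_{\alpha,\beta}+\sigma_R^k M$ of the iteration matrix and contract in the weighted max-norm generated by the Perron-type vector $\gamma$ satisfying $\Upsilon_{\alpha,\beta}\gamma<\gamma$ (legitimate because $\Upsilon_{\alpha,\beta}$ is entrywise nonnegative for $\alpha<\bar{\alpha}$, as the paper notes in proving Theorem~\ref{theorem1}), then invoke Lemma~\ref{lemma9} with the nonzero choice $b_k=(1-\lambda)a_k$, which yields summability of $\Vert\theta_k\Vert_\gamma$ and hence boundedness; the price is that you must then unroll a second, clean recursion and carry out the geometric convolution estimate explicitly. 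What each route buys: your first stage is more elementary and makes the role of Theorem~\ref{theorem1} transparent (a positive vector with $\Upsilon_{\alpha,\beta}\gamma<\gamma$ rather than a constructed matrix norm), while the paper's single unrolling is more economical. Your diagnosis of the crux---the a priori unbounded $\Vert\nabla f(\bm{\mathrm{x}}(k))\Vert$---and of why the rate can only be guaranteed arbitrarily close to $\varphi_{\alpha,\beta}$ (polynomial prefactors from the convolution and the norm slack) both match the paper's argument.
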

\begin{proof}
\sa{The proof follows from similar arguments as in the proof of \cite[Lemma 5]{xi2018linear}. 
Theorem \ref{theorem1} shows that 
$\rho(\Upsilon_{\alpha,\beta})<1$; hence, \rev{from \cite[Lemma 5.6.10]{horn2012matrix},} given any positive $\zeta<1-\varphi_{\alpha,\beta}$, 
there exists a matrix norm $\norm{\cdot}_{(\zeta)}$ such that $\norm{\Upsilon_{\alpha,\beta}}_{(\zeta)}\leq \rho(\Upsilon_{\alpha,\beta})+\tfrac{\zeta}{2}$. Since norms are equivalent in finite-dimensional spaces, we conclude that there
exists some $\Gamma>0$ such that we have
\begin{align}\label{eqz26}
\Vert \Upsilon_{\alpha,\beta}^{k} \Vert \leq \Gamma {\tilde\lambda^{k}},  \qquad  \Vert \Upsilon_{\alpha,\beta}^{{k-j-1}}{\Phi_{j}} \Vert \leq \Gamma  \tilde\lambda^{k},
\end{align}
for all $0\leq j\leq k-1$, where $\tilde{\lambda}\triangleq\max\{\sigma_R, \rho(\Upsilon_{\alpha,\beta})+\tfrac{\zeta}{2}\}<1$.}
By writing \eqref{eqz19} recursively, we get, for all $k\geq 0$,
\begin{align}
\label{eq:recursive}
\theta_{k} \leq \Upsilon_{\alpha,\beta}^{k} \theta_{0} + \sum\limits_{j=0}^{k-1} \Upsilon_{\alpha,\beta}^{k-j-1}\Phi_{j}\Psi_{j}.
\end{align}
{Since all the terms in~\eqref{eq:recursive} have non-negative entries, using \eqref{eqz26}, we get for all $k\geq 0$,} 
 \begin{align}
\Vert\theta_{k}\Vert \quad & \leq \quad\Vert\Upsilon_{\alpha,\beta}^{k}\Vert \Vert \theta_{0}\Vert + \sum\limits_{j=0}^{k-1} \Vert\Upsilon_{\alpha,\beta}^{k-j-1}\Phi_{j}\Vert \Vert\Psi_{j}\Vert
 \leq \quad \Gamma  \tilde\lambda^{k} \big(\Vert \theta_{0}\Vert + \sum\limits_{j=0}^{k-1} \Vert\Psi_{j}\Vert\big). \label{eqz28}
\end{align}
{For any $k\geq 0$, we can bound $\Vert\Psi_{k}\Vert$ as follows:}
\begin{align}
\Vert\Psi_{k}\Vert \quad & \leq \quad \Vert \nabla {f}(\bm{\mathrm{x}}(k)) -\nabla {f}(\bm{\mathrm{x}}^{*})\Vert + \Vert \nabla {f}(\bm{\mathrm{x}}^{*})\Vert
\leq \quad L\Vert \bm{\mathrm{x}}(k) -\hat{\bm{\mathrm{x}}}(k)\Vert +L\Vert \hat{\bm{\mathrm{x}}}(k) -\bm{\mathrm{x}}^{*}\Vert+ \Vert \nabla {f}(\bm{\mathrm{x}}^{*})\Vert\nonumber\\
&\leq \quad {2} L\Vert \theta_{k}\Vert  + \Vert \nabla {f}(\bm{\mathrm{x}}^{*})\Vert. \label{eqz29}
\end{align}
Thus, {for all $k\geq 0$, combining \eqref{eqz28} and \eqref{eqz29} we get}
\begin{align*}
\Vert\theta_{k}\Vert \leq \Biggl( \Vert \theta_{0}\Vert + {2} L \sum\limits_{j=0}^{k-1} \Vert \theta_{j}\Vert 
 + k\Vert \nabla {f}(\bm{\mathrm{x}}^{*})\Vert \Biggl) \Gamma \tilde\lambda^{k}. 
\end{align*}
{For $k\geq 0$, let $a_{k}\triangleq\sum\limits_{j=0}^{k-1} \Vert \theta_{j}\Vert $, $b_k\triangleq 0$, $\tilde{c}\triangleq{2}L\Gamma$, and $\tilde{d}_{k}\triangleq \Gamma \Vert \theta_{0}\Vert + k\Gamma \Vert \nabla {f}(\bm{\mathrm{x}}^{*})\Vert$; hence, we get}
\begin{align}\label{eqz31}
{\Vert\theta_{k}\Vert= a_{k+1}-a_{k}\leq (\tilde{c} a_{k}+\tilde{d}_{k})\tilde{\lambda}^k,\quad \forall~k\geq 0.}
\end{align}
{Define $c_k\triangleq \tilde{c}\tilde\lambda^k\geq 0$ and $d_k\triangleq \tilde{d}_k\tilde\lambda^k\geq 0$ for $k\geq 0$. Since $\tilde{\lambda}\in(0,1)$, we have $\sum_{k=0}^{\infty} c_k+d_k< \infty$; therefore, Lemma \ref{lemma9} implies that $\{a_{k}\}$ converges. \sa{Furthermore, our choice of $\zeta>0$ and the definition of $\tilde\lambda$ imply that $\tilde\lambda+\tfrac{\zeta}{2}\in(0,1)$; therefore, since $\{a_{k}\}$ is bounded, \eqref{eqz31} leads to 
\begin{align}
\lim\limits_{k\rightarrow \infty}\frac{\Vert \theta_{k} \Vert}{(\tilde\lambda+\tfrac{\zeta}{2})^{k}}\leq \frac{(\tilde{c} a_{k}+\tilde{d}_{k})\tilde\lambda^{k}}{(\tilde\lambda+\tfrac{\zeta}{2})^{k}}=0.
\end{align}}
Thus, there exist \sa{$c>0$} such that 
\begin{align}
\Vert \theta_{k} \Vert \leq \sa{c}(\tilde\lambda+\tfrac{\zeta}{2})^{k},\quad  
\forall k\geq 0, 
\end{align}
Thus, we get the desired result since for all $k\geq 0$,}
\begin{align*}
\Vert \bm{\mathrm{x}}(k)-\bm{\mathrm{x}}^{*} \Vert \quad &\leq \quad\Vert \bm{\mathrm{x}}(k)-\hat{\bm{\mathrm{x}}}(k)\Vert+ \Vert\hat{\bm{\mathrm{x}}}(k)-\bm{\mathrm{x}}^{*} \Vert 
\leq \quad {2}  \Vert  \theta_{k} \Vert
\leq  {2} c(\tilde\lambda+\tfrac{\zeta}{2})^{k}
\sa{\leq   2c (\varphi_{\alpha,\beta}+\zeta)^k,} 
\end{align*}
\sa{and we clearly have $\varphi_{\alpha,\beta}+\zeta<1$.}\qed
\end{proof}
\section{Numerical Results}
\label{sec:numerical}
In this section, we {provide some numerical 
results 
to demonstrate the performance of the proposed method against the state-of-the-art competitive algorithms designed for \fin{consensus optimization over} directed graphs}. \sa{In our experiments}, we considered two types of distributed regression problems of the form given in \eqref{eqz1} \fin{to test FRSD method: \textit{(a)} regression with Huber loss, 
\textit{(b)} the logistic regression, which are} described in Sections~\ref{sec:Huber} and~\ref{sec:logistic}, respectively. {Throughout the experiments, we use the} uniform 
\fin{weights 
for the row-stochastic 
matrix defined} in \eqref{eqz5}, {i.e., $r_{ij}=1/|\cN_{i}^{in}|$ for all $i\in\cV$, \sa{and \rev{we 
use coordinated step-size and momentum parameters} for $\mathcal{AB}$m, $\mathcal{ABN}$, FROZEN and D-DNGT to have a fair comparison with other methods.}}

\sa{For both distributed regression problems,} we compare FRSD with Xi-row \cite{xi2018linear}, \sa{FROZEN \cite{
xin2019distributedNEST} and D-DNGT \cite{lu2020nesterov},} which use only \emph{row-stochastic} weights similar to our method, with Push-DIGing~\cite{nedic2017achieving}, which utilizes \emph{column-stochastic} weights, {and also with}  $\mathcal{AB}$~\cite{xin2018linear}, \sa{$\mathcal{AB}$m \cite{xin2019distributed}, $\mathcal{ABN}$~\cite{
xin2019distributedNEST}} and Push-Pull~\cite{pu2020push}, which use both \emph{row-stochastic} and \emph{column-stochastic} weights \sa{over six {different} time-invariant directed graphs {with $n = 10 , 30$ , $50$ , $100$ and $200$ nodes (agents)}, see Figure~\ref{fig1}}.

\sa{Furthermore, in Section~\ref{sec:random}, we also conducted a numerical test over the random graphs comparing the proposed method with the other row-stochastic methods, i.e., \rev{Xi-row}, FROZEN and D-DNGT.}

\subsection{
{Distributed Regression with Huber Loss}}
\label{sec:Huber}
{Suppose $\tilde{\bm{x}}\in\mR^p$ is the \emph{unknown} linear model, and for each $i\in\cV$, let $\bm{b}_i\in\reals^{m_i}$ be the corresponding noisy measurement vector, i.e., $\bm{b}_i=M_i \tilde{\bm{x}}+\bm{n}_i$ where $\bm{n}_i\in\reals^{m_i}$ is the measurement noise vector. Given parameter $\xi>0$, the Huber loss function $H_{\xi}:\reals\to\reals_+$ is defined as
{
\begin{align*} 
  H_{\xi}(z)=\left\{
                \begin{array}{ll}
              \dfrac{1}{2} z^{2} , &\text{if} \quad\vert z \vert \leq \xi;\\ 
                  \xi\left( \vert z \vert - \dfrac{1}{2} \xi\right)  \quad &\text{otherwise}. 
                \end{array}
              \right.
\end{align*}}%
For any $m\in\integers_+$, we also define $\mathbf{H}_{\xi}:\reals^m\to\reals^m$ such that $\mathbf{H}_{\xi}(\bm{z})=[H_{\xi}(z_j)]_{j=1}^m$ where $\bm{z}=[z_j]_{j=1}^m\in\reals^m$.}

{In this experiment, the goal is to estimate $\tilde{\bm{x}}$ with an optimal solution $\bm{x}^*$ to the regression problem with Huber loss:
\begin{align}
\label{eq:huber-problem}
\bm{x}^*\in\argmin_{\bm{x}\in\mathbb{R}^{p}}\bar{f}(\bm{x})\triangleq\frac{1}{n}\sum_{i\in\cV}\mathbf{H}_{\xi}(M_i \bm{x}-\bm{b}_i).
\end{align}
\sa{In the experiments, we solve \eqref{eq:huber-problem} over the set of directed graphs shown in Figure~\ref{fig1}. 
We generate data as in \cite{nedic2017achieving} using $p=5$ and $m_i=10$} for $i\in\cV$. We set the Huber loss parameter $\xi=2$, and   
for each $i\in\cV$, we generated $f_i(\bm{x})=\mathbf{H}_{\xi}(M_i \bm{x}-\bm{b}_i)$ as described in \cite[Sec.~6]{nedic2017achieving} such that $L_i=1$.  Moreover, we also initialized all the methods \sa{we tested} from $\bm{x}_i(0)=\mathbf{0}$ for all $i\in\cV$. 
In our experiments, $n\in\{10,30,50,100, 200\}$, $m_i=10$ for all $i\in\cV$ and $p=5$; therefore, $\bar{f}$ and $f_i$ for $i\in\cV$
are restricted strongly convex when the regression error is small.}  
\begin{figure}[!h]
  \begin{subfigure}[b]{0.2\textwidth}
    \includegraphics[width=\textwidth]{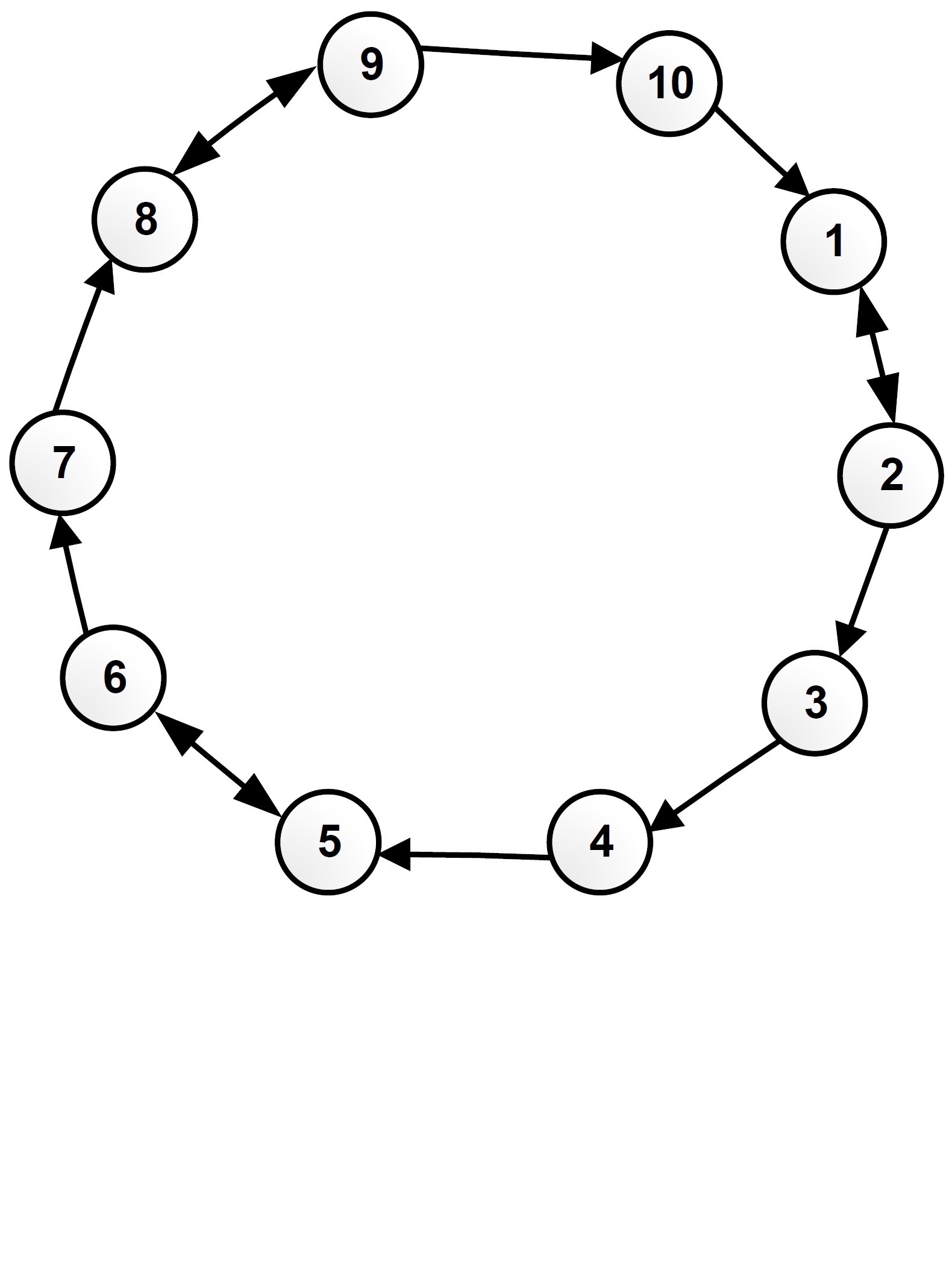}
    \caption{}
    \label{fig1a}
  \end{subfigure}
  \begin{subfigure}[b]{0.3\textwidth}
    \includegraphics[width=\textwidth]{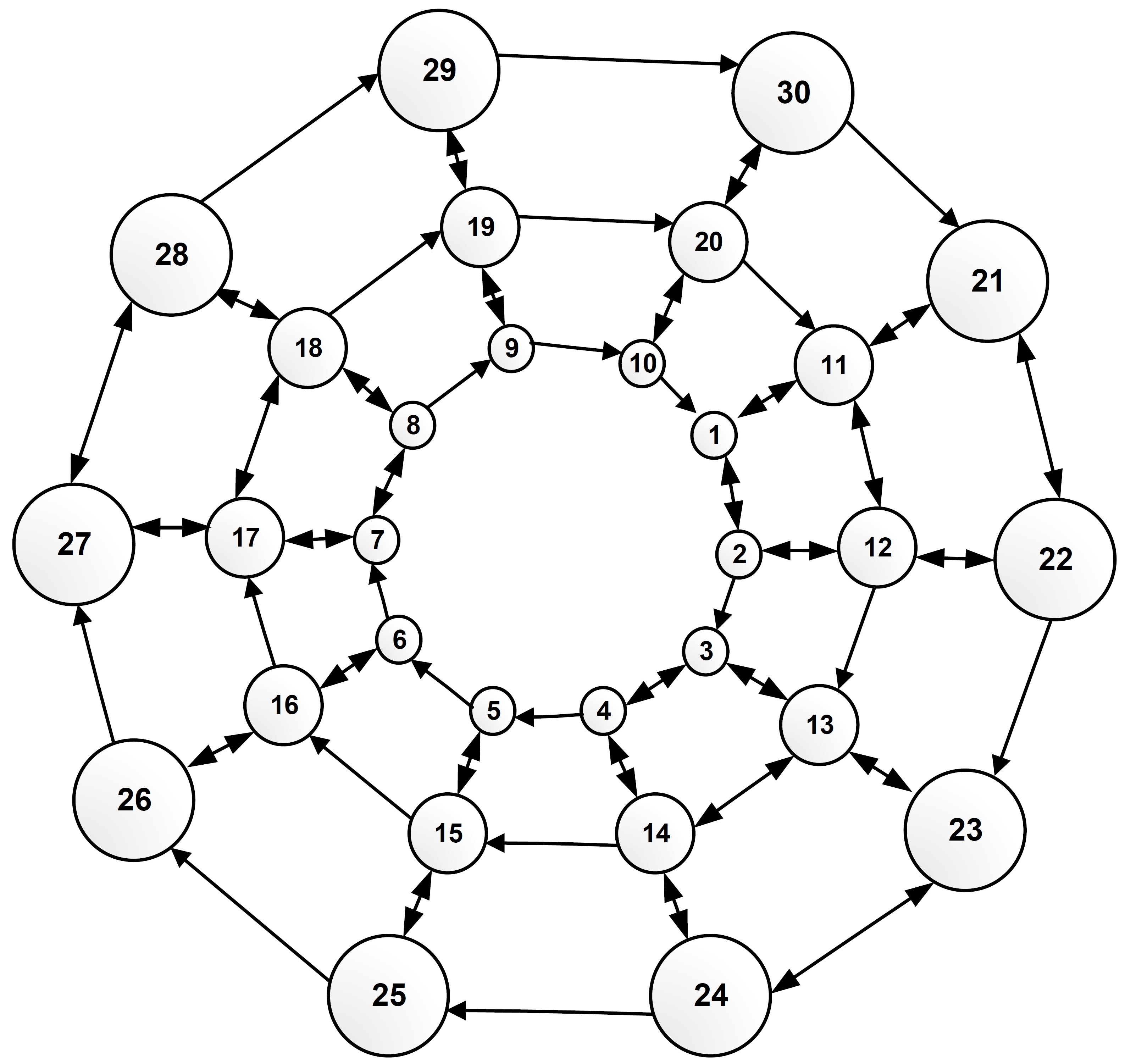}
    \caption{}
    \label{fig1b}
  \end{subfigure}
   \begin{subfigure}[b]{0.3\textwidth}
    \includegraphics[width=\textwidth]{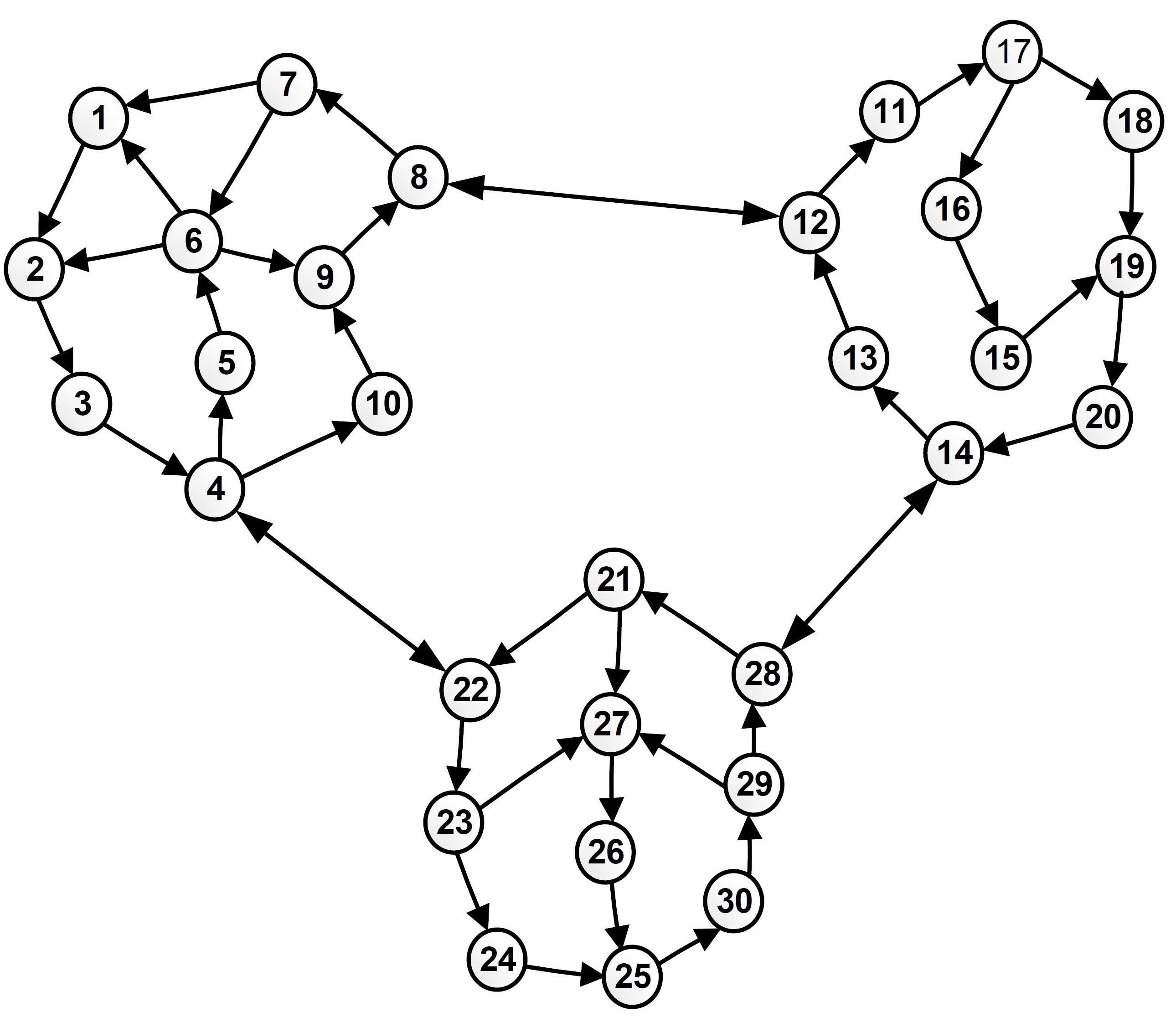}
    \caption{}
    \label{fig1c}
  \end{subfigure}
   \hfill
  \begin{subfigure}[b]{0.3\textwidth}
    \includegraphics[width=\textwidth]{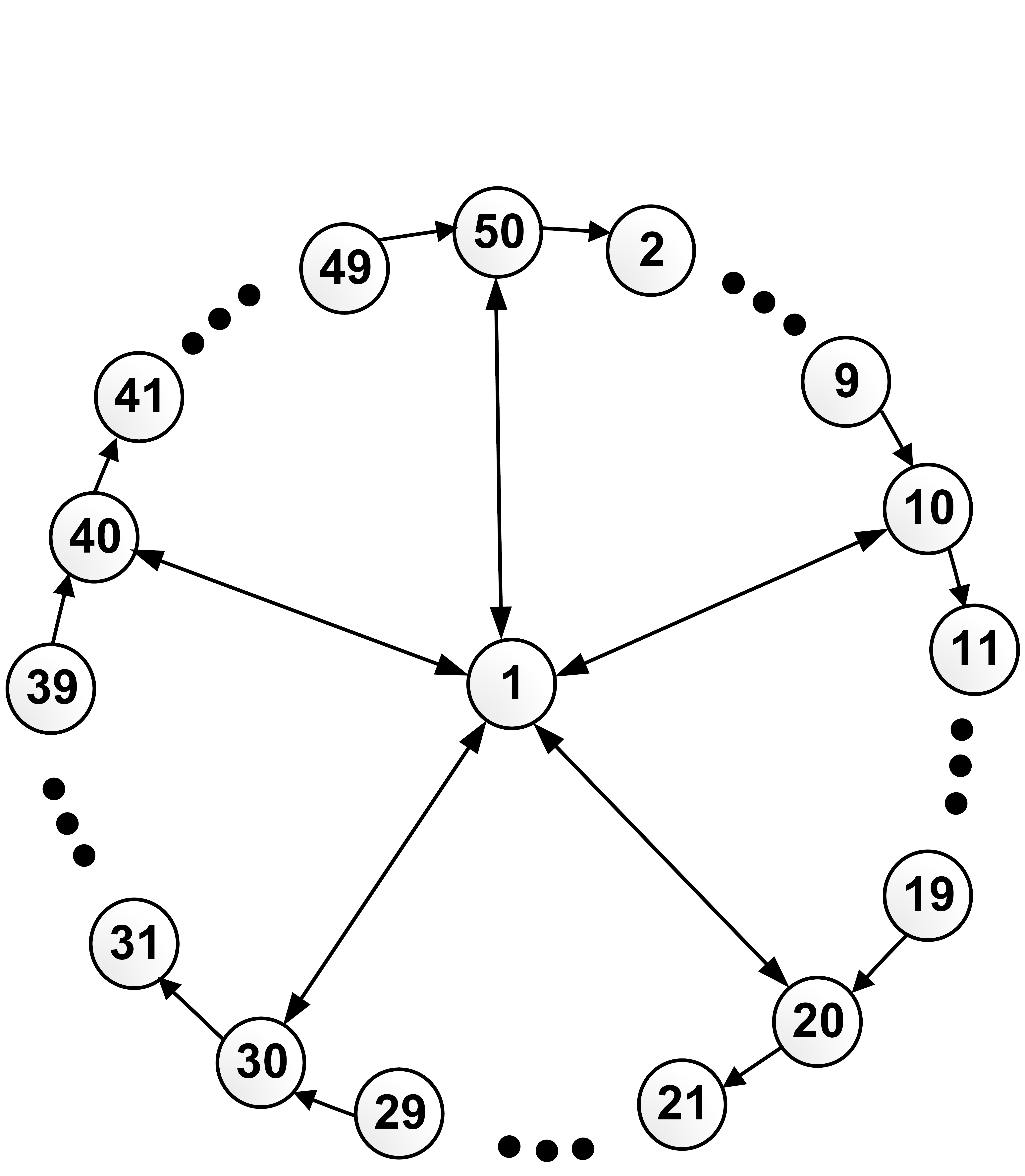}
    \caption{}
    \label{fig1d}
  \end{subfigure}
   \hfill
  \begin{subfigure}[b]{0.3\textwidth}
    \includegraphics[width=\textwidth]{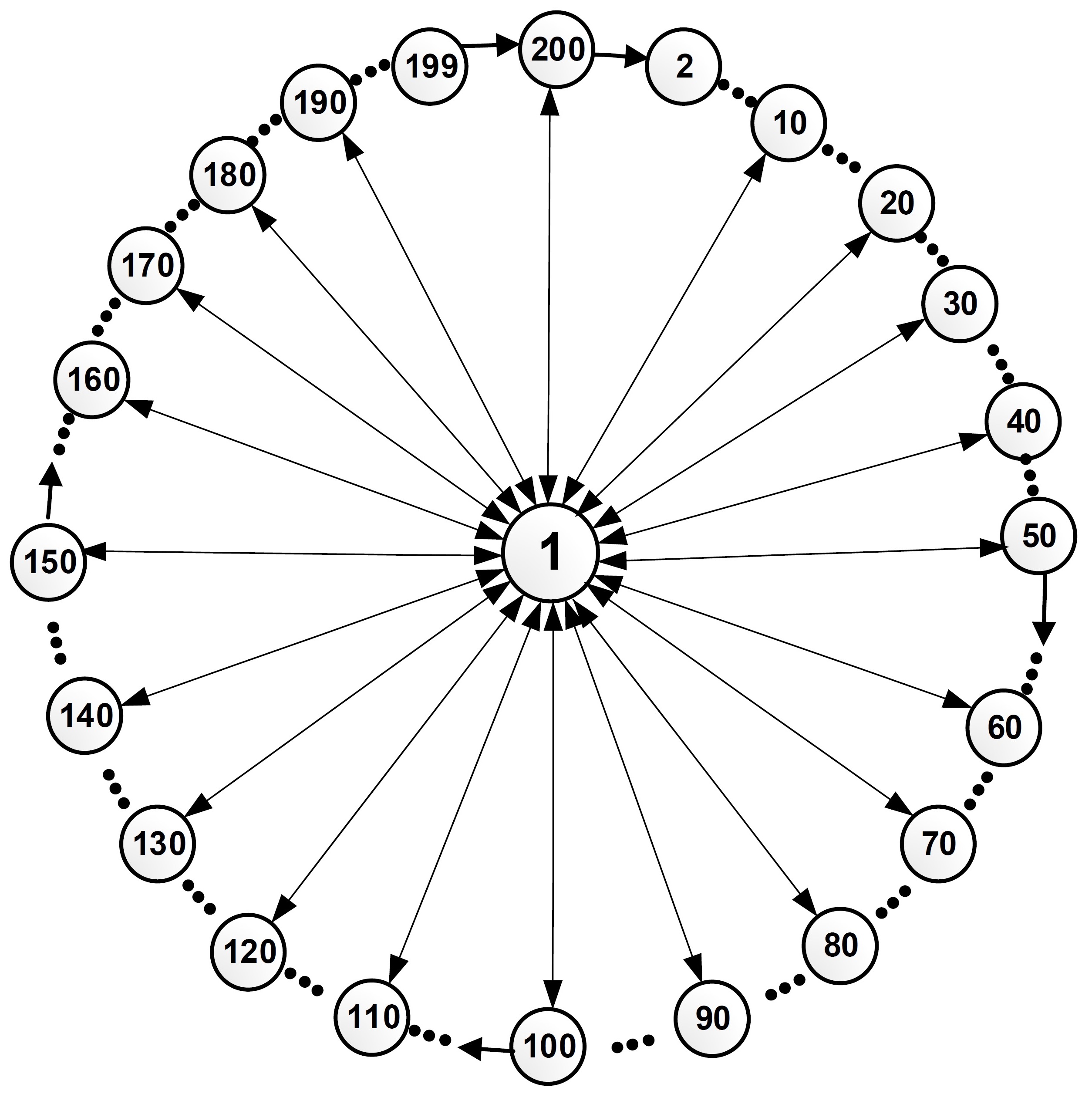}
    \caption{}
    \label{fig1e}
  \end{subfigure}
   \hfill
  \begin{subfigure}[b]{0.3\textwidth}
    \includegraphics[width=\textwidth]{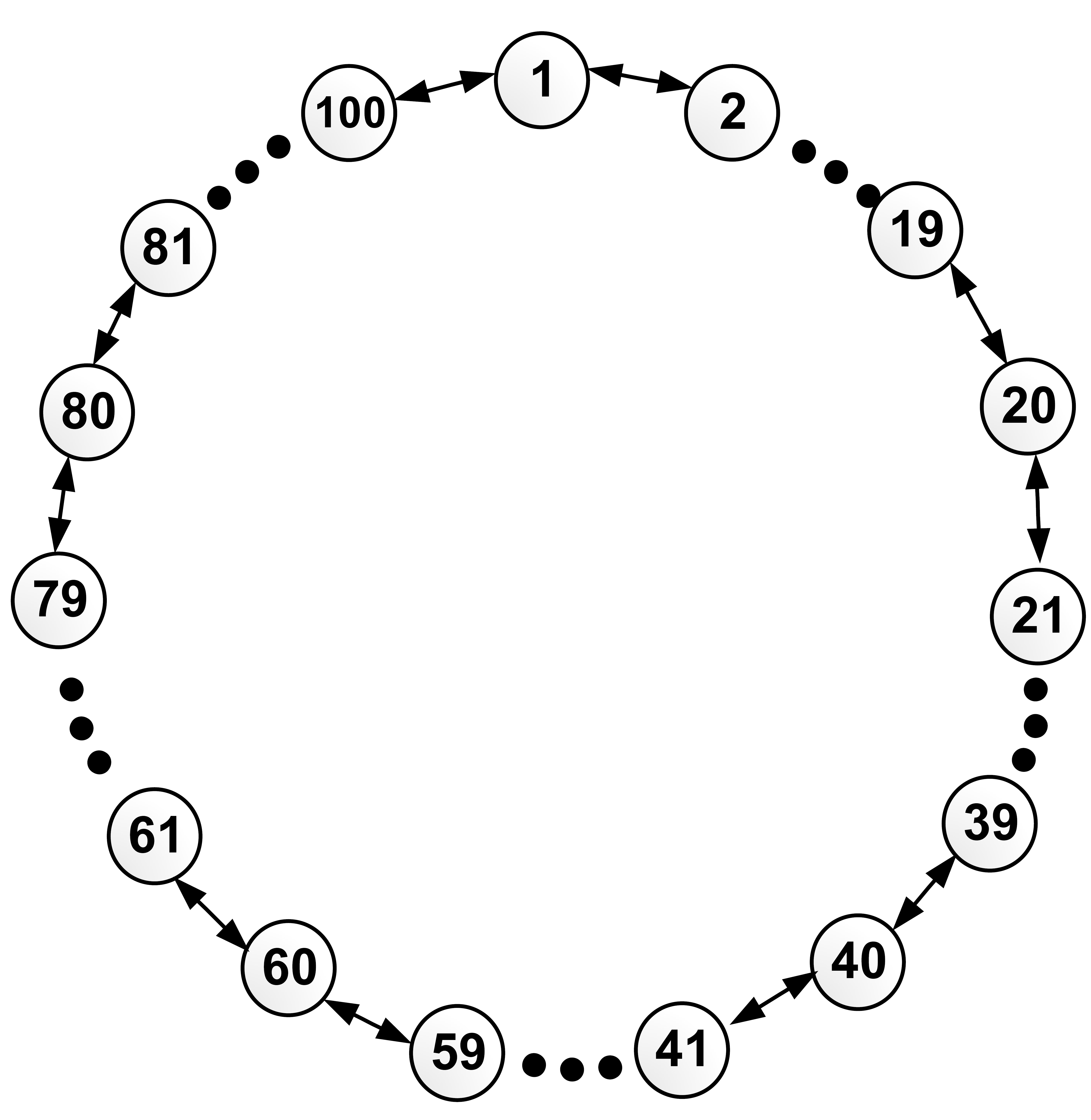}
    \caption{}
    \label{fig1f}
  \end{subfigure}
  \caption{\sa{Strongly-connected digraphs tested in our experiments.}}\label{fig1}
\end{figure}
\begin{figure}[!h] 
  \begin{subfigure}[b]{0.32\textwidth}
    \includegraphics[width=\textwidth]{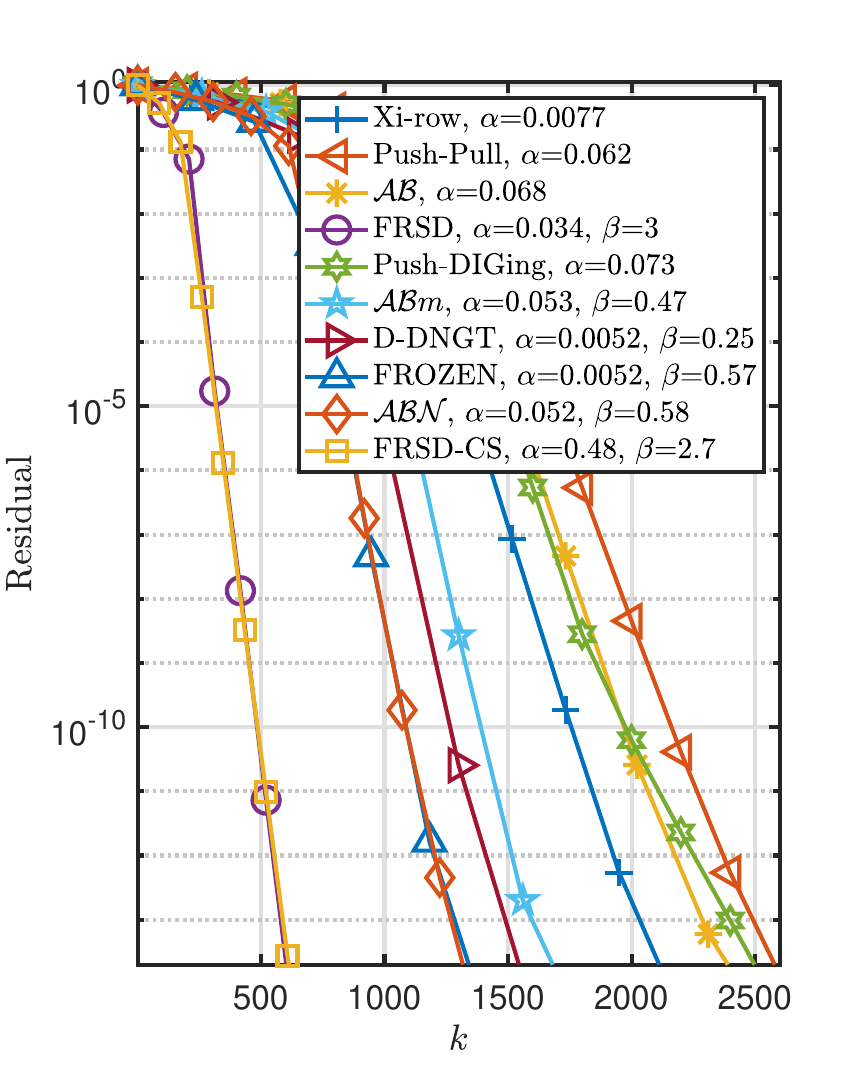}
    \caption{
    {$\{r(k)\}_k$ for  Fig.\ref{fig1}(a)}}
    \label{fig2a}
  \end{subfigure}
  \hfill
  \begin{subfigure}[b]{0.32\textwidth}
    \includegraphics[width=\textwidth]{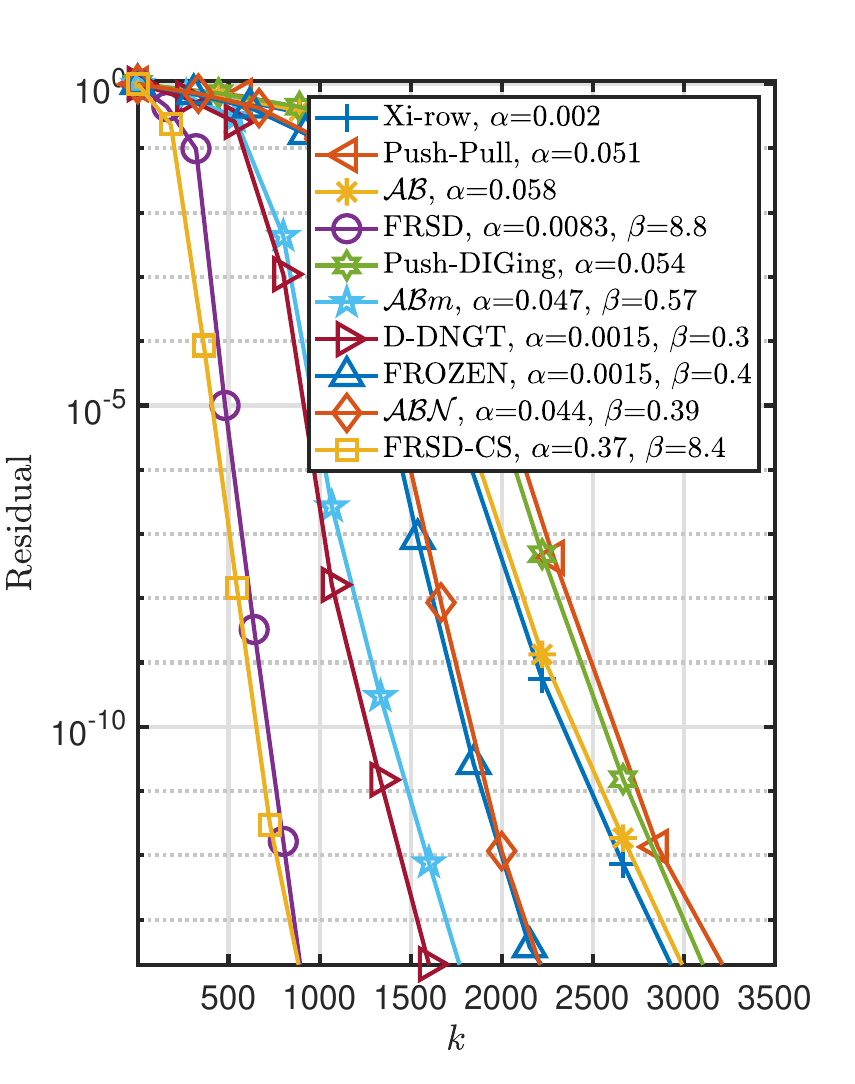}
    \caption{
    {$\{r(k)\}_k$ for  Fig.\ref{fig1}(b)}}
    \label{fig2b}
  \end{subfigure}
   \begin{subfigure}[b]{0.32\textwidth}
    \includegraphics[width=\textwidth]{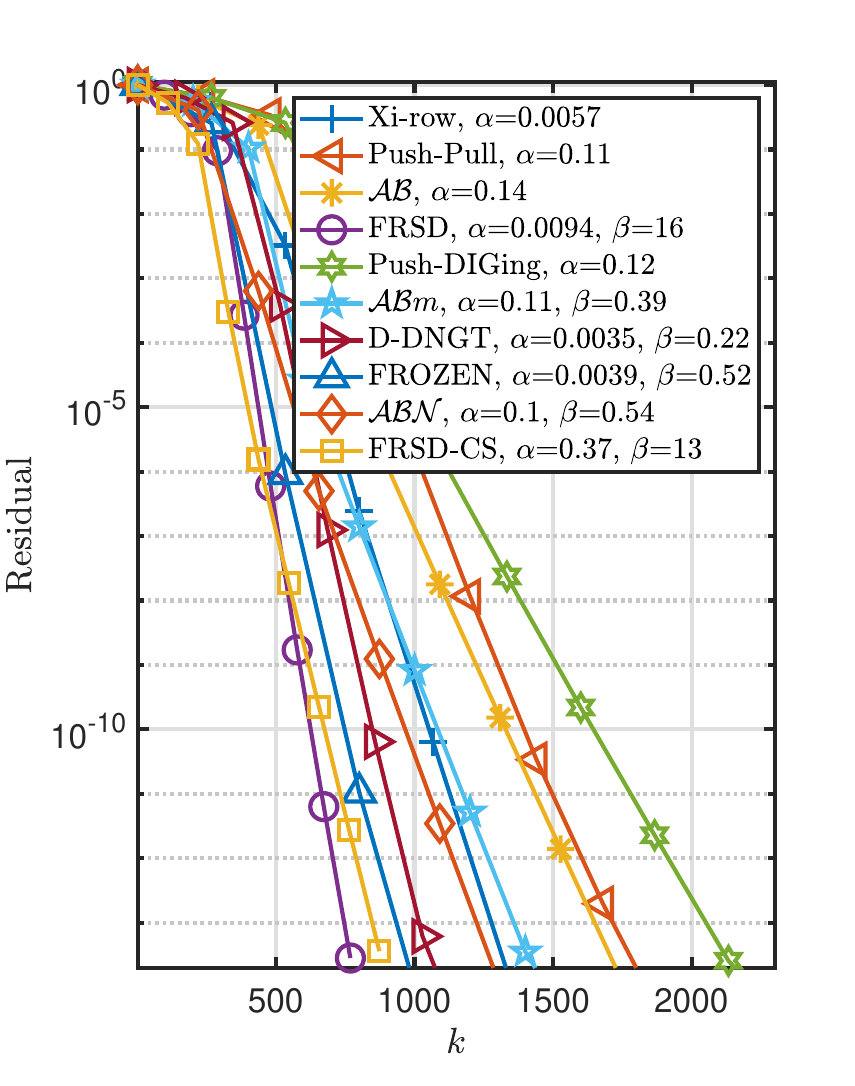}
    \caption{
    {$\{r(k)\}_k$ for  Fig.\ref{fig1}(c)}}
    \label{fig2c}
  \end{subfigure}
  \hfill
  \begin{subfigure}[b]{0.32\textwidth}
    \includegraphics[width=\textwidth]{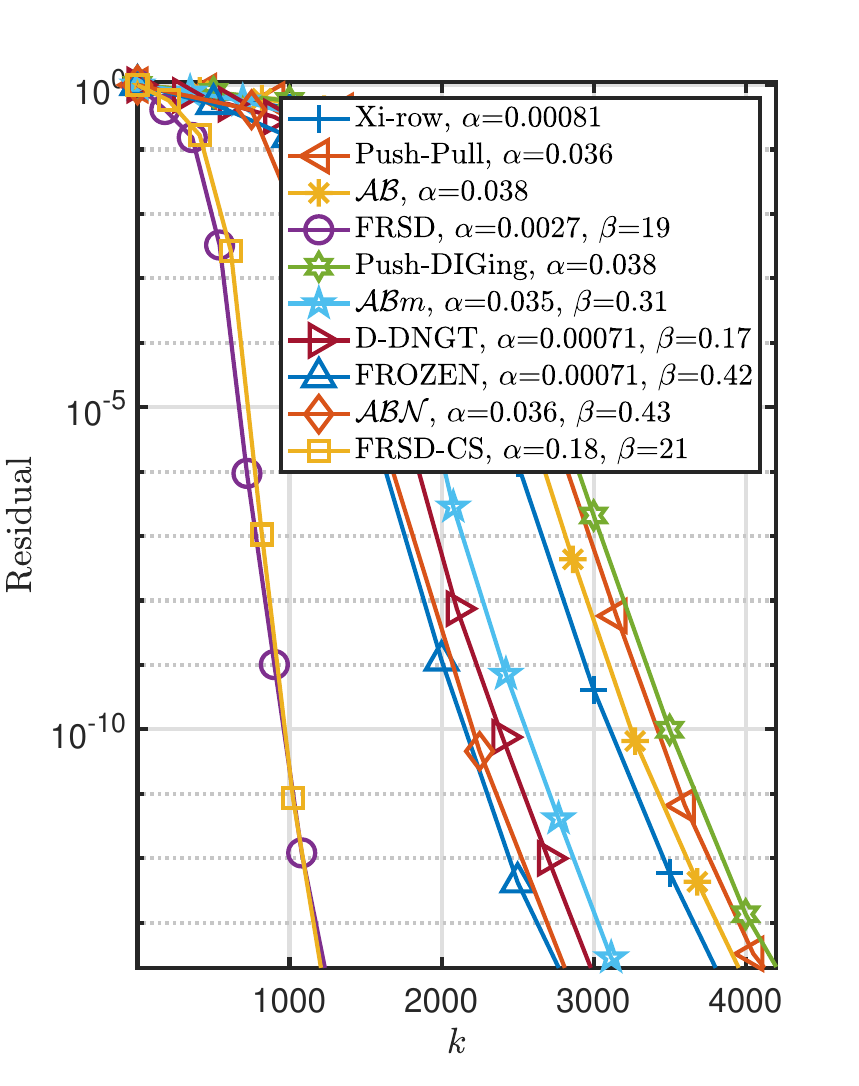}
    \caption{
    {$\{r(k)\}_k$ for Fig.\ref{fig1}(d)}}
    \label{fig2d}
  \end{subfigure}
  \begin{subfigure}[b]{0.32\textwidth}
    \includegraphics[width=\textwidth]{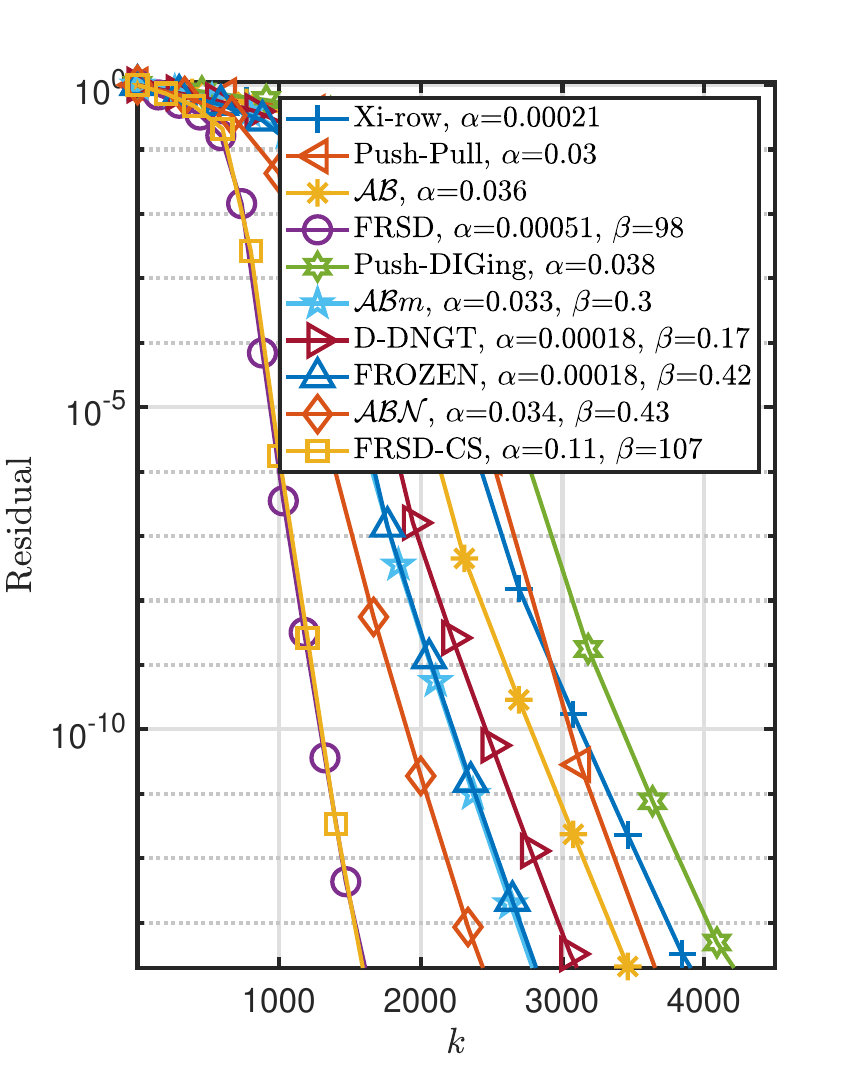}
    \caption{
    {$\{r(k)\}_k$ for  Fig.\ref{fig1}(e)}}
    \label{fig2e}
  \end{subfigure}
  \hfill
  \begin{subfigure}[b]{0.32\textwidth}
    \includegraphics[width=\textwidth]{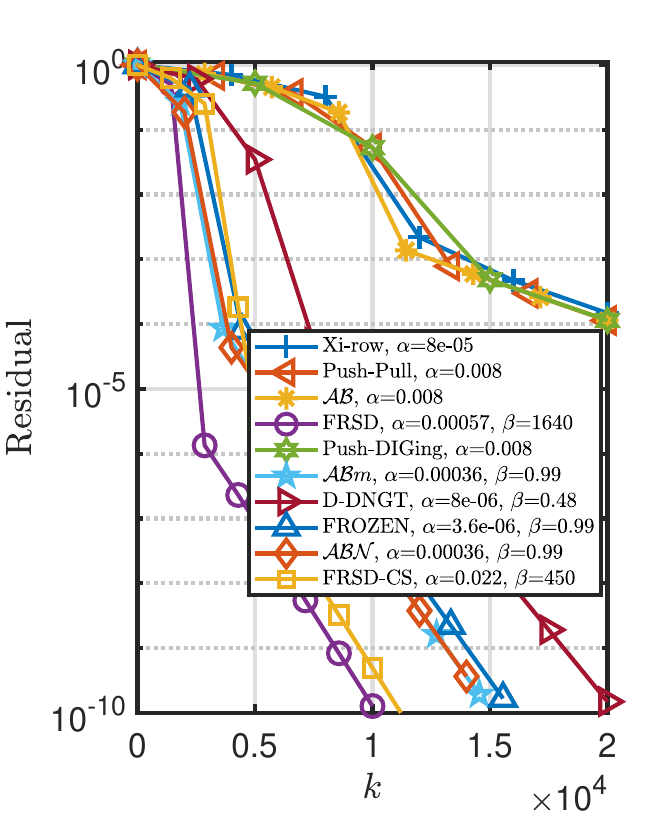}
    \caption{
    {$\{r(k)\}_k$ for Fig.\ref{fig1}(f)}}
    \label{fig2f}
  \end{subfigure}
  \caption{
  Distributed Regression with Huber Loss}\label{fig2}
  \vspace*{-5mm}
\end{figure}
{In Fig.~\ref{fig2}, we plot 
the residual sequence $\{r(k)\}_{k\geq 0}$ for all the methods where $r(k)\triangleq\dfrac{\Vert\bm{\mathrm{x}}(k)-\bm{\mathrm{x}}^{*}\Vert}{\Vert\bm{\mathrm{x}}(0)-\bm{\mathrm{x}}^{*}\Vert}$. To optimize the convergence rate, we tuned 
parameters for all algorithms.} 
\subsection{
{Distributed} Logistic Regression} 
\label{sec:logistic}
{We now consider the distributed binary classification problem 
using the logistic regression to train a linear classifier. Suppose each node (agent) $i\in\cV$ has access to $(M_i,\bm{b}_i)\in \mathbb{R}^{m_i\times p}\times\left\lbrace-1,+1 \right\rbrace^{m_i}$. Let $L:\reals\times\{-1,1\}\to\reals_+$ such that $L(u,v)=\ln(1+\exp(-u v))$; and for any $m\in\integers_+$, we also define $\mathbf{L}:\reals^m\times\{-1,1\}^m\to\reals^m_+$ such that $\mathbf{L}(\mathbf{u},\mathbf{v})=[L(u_j,v_j)]_{j=1}^m$ where $\mathbf{u}=[u_j]_{j=1}^m$ and $\mathbf{v}=[v_j]_{j=1}^m$.}
{The linear classifier $\bm{x}^*$ is computed by solving the regularized logistic regression problem:
\begin{align}
\label{eq:logistic}
\bm{x}^*=\argmin_{\bm{x}\in\mathbb{R}^{p}}\bar{f}(x)&\triangleq \frac{1}{n}\sum\limits_{i\in\cV} \left(\mathbf{L}(M_i\bm{x},\bm{b}_i)+\dfrac{\lambda}{2}\Vert x \Vert_{2}^{2}\right).
\end{align}
where using regularization parameter $\lambda>0$ improves the ststistical properties of $\bm{x}^*$ -- see~\cite{sridharan2008fast}.}
\subsubsection{Tests on Specific Graph Topologies}
\label{sec:log_reg_test_deterministic}
In the \sa{first set of} experiments, we use the Australian-scale data set~\cite{chang2011libsvm} with $790$ data points where each data point {consists of a $14$-dimensional feature vector, i.e., $p=15$ to model the intercept, and the corresponding binary label. Suppose} each agent $i\in\cV$ samples $m_i=10$ {data points} uniformly at random from the training set {with replacement}. 
\sa{Hence, for each $i\in\cV$, we form $M_i\in\reals^{m_i\times p}$ using $m_i$ data points with $p-1$ features and set the last column of $M_i$ to $\bm{1}_{m_i}$ 
in order to model the intercept.} 
{We test the proposed method FRSD against the same methods that we compared with in Section~\ref{sec:Huber}. The residual sequence $\{r(k)\}_{k\geq 1}$ for all the methods are shown in Fig.~\ref{fig3}, where $r(k)$ is defined in Section~\ref{sec:Huber}.}\\

\begin{figure}[h!] 
  \begin{subfigure}[b]{0.32\textwidth}
    \includegraphics[width=\textwidth]{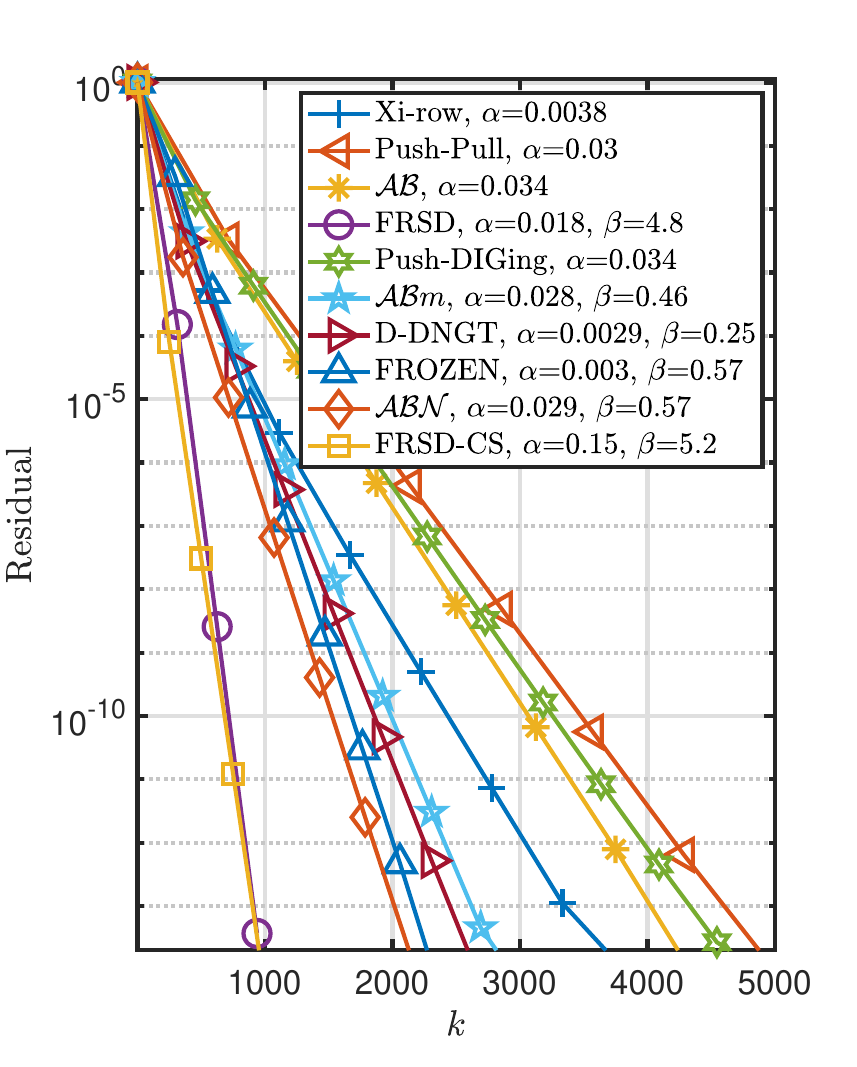}
    \caption{
    {$\{r(k)\}_k$ for  Fig.\ref{fig1}(a)}}
    \label{fig3a}
  \end{subfigure}
  \hfill
  \begin{subfigure}[b]{0.32\textwidth}
    \includegraphics[width=\textwidth]{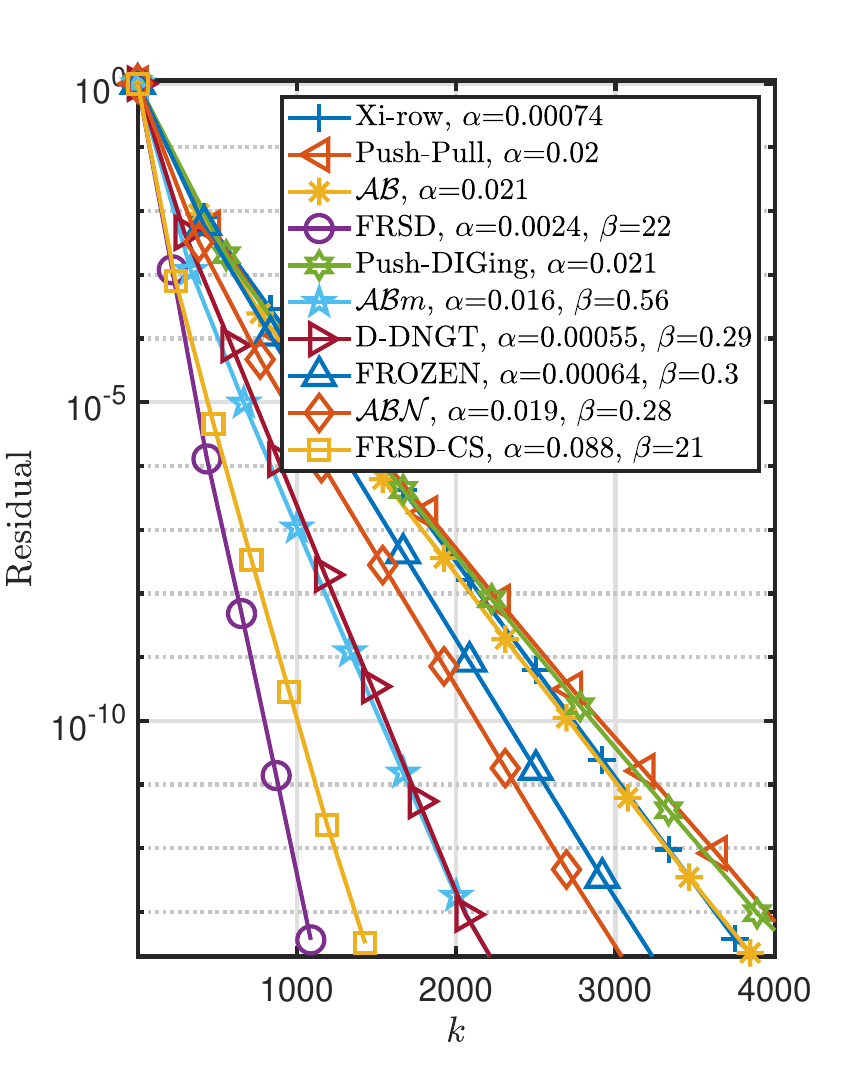}
    \caption{
    {$\{r(k)\}_k$ for Fig.\ref{fig1}(b)}}
    \label{fig3b}
  \end{subfigure}
    \begin{subfigure}[b]{0.32\textwidth}
    \includegraphics[width=\textwidth]{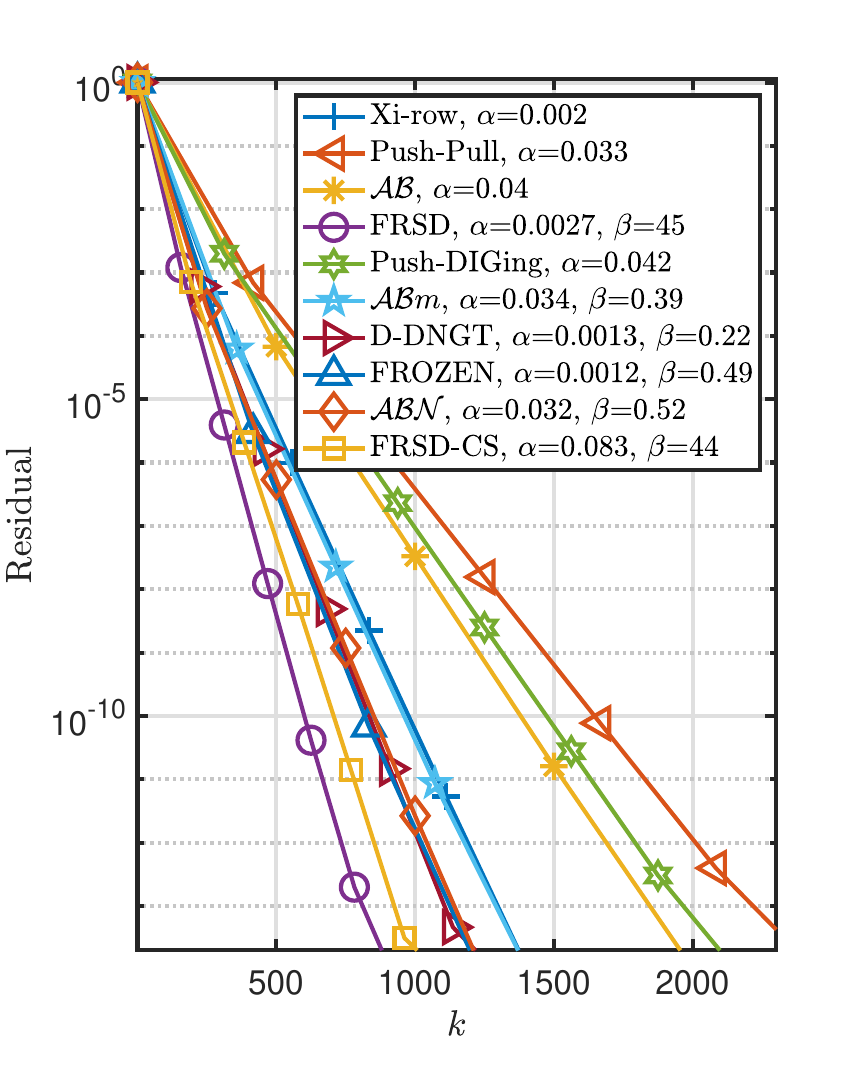}
    \caption{
    {$\{r(k)\}_k$ for Fig.\ref{fig1}(c)} }
    \label{fig3c}
  \end{subfigure}
  \hfill
  \begin{subfigure}[b]{0.32\textwidth}
    \includegraphics[width=\textwidth]{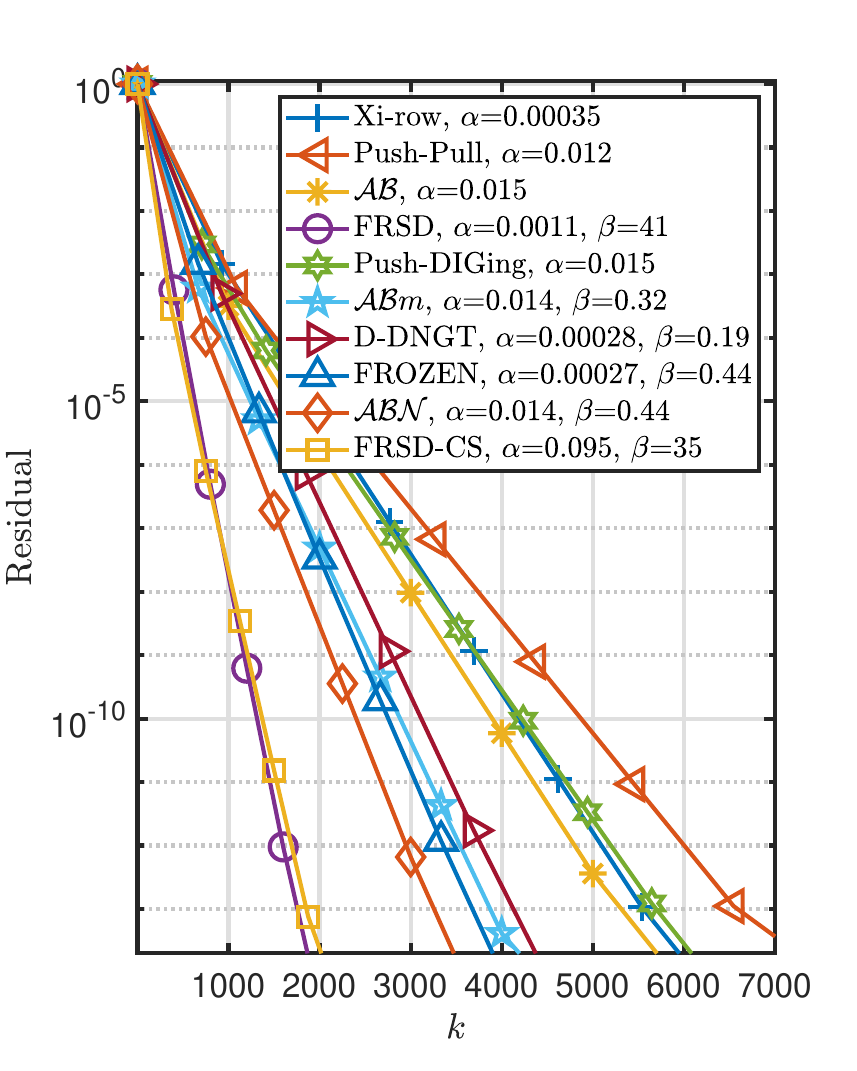}
    \caption{
    {$\{r(k)\}_k$ for Fig.\ref{fig1}(d)} }
    \label{fig3d}
  \end{subfigure}
  \hfill
  \begin{subfigure}[b]{0.32\textwidth}
    \includegraphics[width=\textwidth]{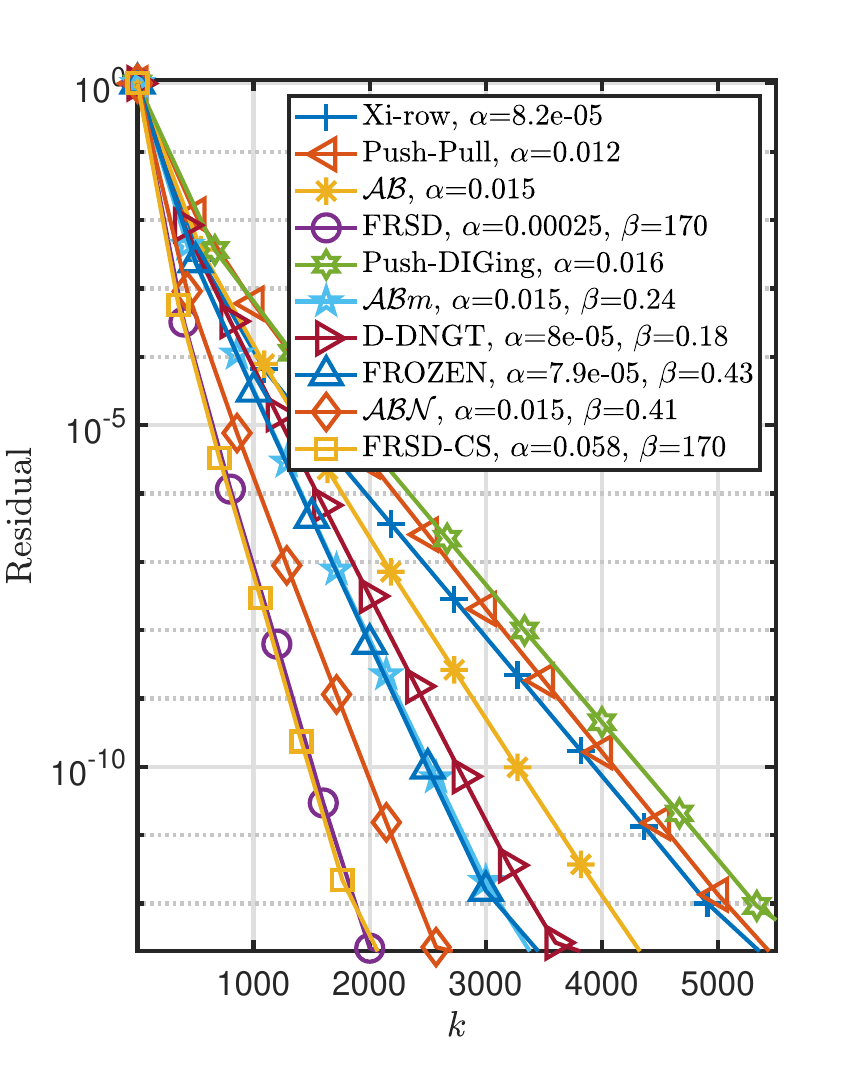}
    \caption{
    {$\{r(k)\}_k$ for Fig.\ref{fig1}(e)}}
    \label{fig3e}
  \end{subfigure}
  \begin{subfigure}[b]{0.32\textwidth}
    \includegraphics[width=\textwidth]{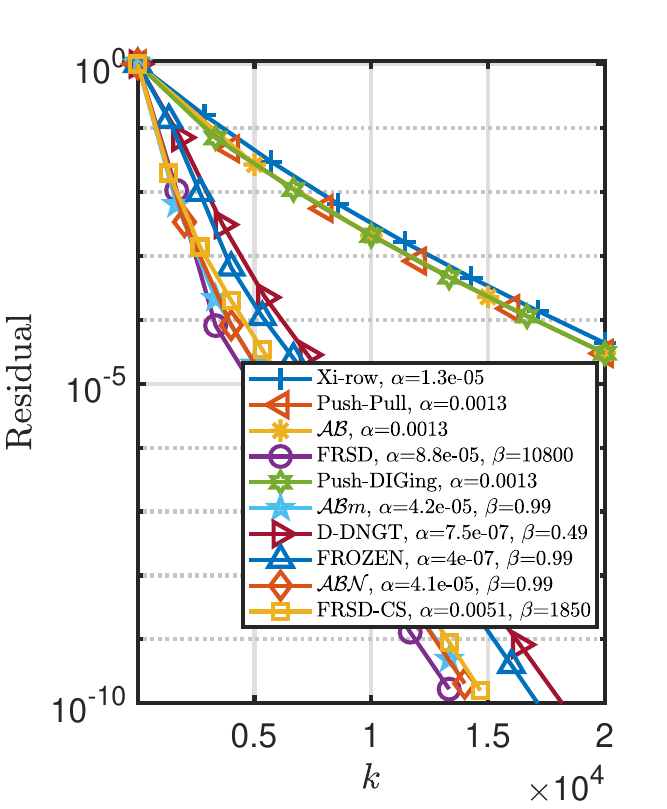}
    \caption{
    {$\{r(k)\}_k$ for Fig.\ref{fig1}(f)}}
    \label{fig3f}
  \end{subfigure}
  \caption{
  Distributed Logistic Regression}\label{fig3}
  \vspace*{-5mm}
\end{figure}
\subsubsection{Tests on Random Graphs}
\label{sec:random}
\sa{In the second set of experiments, we tested FRSD and FRSD-CS over random graphs against the other row-stochastic methods, i.e., \rev{Xi-row}, FROZEN and D-DGNT, to solve \rev{the} distributed logistic regression problem defined in Section~\ref{sec:logistic}. We considered two scenarios: Scenario I $n>p$ and Scenario II $p>n$ --recall that $n$ and $p$ denote the number of nodes in the network and the dimension of the decision variable, respectively. For Scenario I, i.e., $n>p$, we looked at two cases: low connectivity ratio (sparser graphs) and high connectivity ratio (denser graphs), where the connectivity ratio is defined as the ratio of the number edges to $n(n-1)$ \rev{--note that $n(n-1)$ is equal to the number of all possibles edges excluding self-loops.} For each scenario, we ran all $5$ algorithms on $20$ different randomly generated graphs. We reported the residual $r(k)\triangleq\norm{\bx(k)-\bx^*}/\norm{\bx(0)-\bx^*}$ against iteration counter $k$ and we also reported the residual against the amount communication per node by the end of iteration $k$ --recall that at each iteration FRSD and FRSD-CS require each node to broadcast only $n+p$-dimensional vector while the others, i.e., \rev{Xi-row}, FROZEN and D-DGNT, require each node to broadcast $n+2p$-dimensional vector. We have observed that both FRSD and FRSD-CS are competitive against the state of the art row stochastic methods, and the performance of our algorithms is 
\rev{superior} either when $n<p$ or when the graphs are sparse, which is \rev{indeed the case for most of the real-life networks.} 
Next we describe how we generated the random graphs.}

\paragraph{Random Graph Generation}
\label{sec:graph_gen}
\sa{We used \texttt{DGen} code\footnote{\texttt{DGen} code is written by Dr. W. Shi, see \url{https://sites.google.com/view/wilburshi/home/research/software-a-z-order/graph-tools/dgen}} to generate strongly connected random graphs. The algorithm \texttt{DGen}, implemented in MATLAB, receives two input: number of nodes $n$, and the connectivity ratio $\phi\in(0,1]$. Given $n$ and $\phi$, \texttt{DGen} generates a strongly connected random graph with $|\cE|=\lceil \phi n(n-1)\rceil\triangleq m_{n,\phi}$ edges. Let $\{p_i\}_{i=1}^n$ be a permutation of $[n]\triangleq\{1,\ldots,n\}$ chosen uniformly at random, and let $\cI=\{i\in[n]:\ p_i\neq i\}$. Then \texttt{DGen} creates a directed cycle $\cC$ using the nodes $\{p_i\}_{i\in\cI}$. Now consider a smaller dimensional graph with nodes $\{i:~i\in [n]\setminus\cI\}\cup\{c^*\}$ where $c^*$ is a ``super-node" representing the cycle $\cC$. Note that this new graph has $n-|\cI|+1$ nodes; one can repeat the above process by setting $n\gets n-|\cI|+1$, and whenever we connect a node from $[n]\setminus\cI$ with $c^*$, one randomly picks a node belonging to $\cC$. This process ends when the smaller dimensional graph has only a single super-node with no other nodes, which gives us a strongly connected graph. Say this graph has $\tilde{m}$ nodes, then the remaining $m_{n,\phi}-\tilde{m}$ edges are randomly added to obtain a strongly connected graph with connectivity ratio $\phi$.}

\rev{In the experiments with randomly generated strongly connected graphs, we only tested row-stochastic methods. Indeed, being able to get away with the eigenvector estimation through employing both row- and column-stochastic mixing matrices, AB-type methods, e.g., $\mathcal{AB}$~\cite{xin2018linear}, $\mathcal{AB}$m~\cite{xin2019distributed}, Push-Pull~\cite{pu2020push} and $\mathcal{ABN}$~\cite{xin2019distributedNEST}, perform better on these experiments than the first-order methods using row-stochastic weights alone. That is why we only reported the results for the row-stochastic methods to have a fair comparison among equivalent methods.}

\paragraph{Scenario I $(n>p)$} \sa{We set $n=200$ 
and generated $20$ random graphs for each connectivity ratio $\phi\in\{0.015, 0.15\}$. We use the same dataset and same problem setup with Section~\ref{sec:log_reg_test_deterministic}, i.e., $p=15$ and the number of data points $m_i=10$ for all $i\in\cV$. In Figures~\ref{fig:test-I-high} and~\ref{fig:test-I-low}, we report the results for \emph{high} connectivity ratio $\phi=0.15$ and the \emph{low} one $\phi=0.015$, respectively. For Scenario I, i.e., when $n>p$, we observe that FRSD and FRSD-CS are competitive against FROZEN and D-DGNT while performing better than \rev{Xi-row}. Furthermore, we also observe that the performances of FRSD and FRSD-CS improve as the random graphs get sparser, i.e., they perform significantly better for $\phi=0.015$ when compared to their performance for $\phi=0.15$. Finally, as expected, for $n>p$, the residual shows the same decay patterns with respect to increase in either iteration counter or the amount of data broadcast per node.}

\vspace*{-2mm}
\begin{figure}[h!]
  \begin{subfigure}[a]{0.45\textwidth}
    \includegraphics[width=\textwidth]{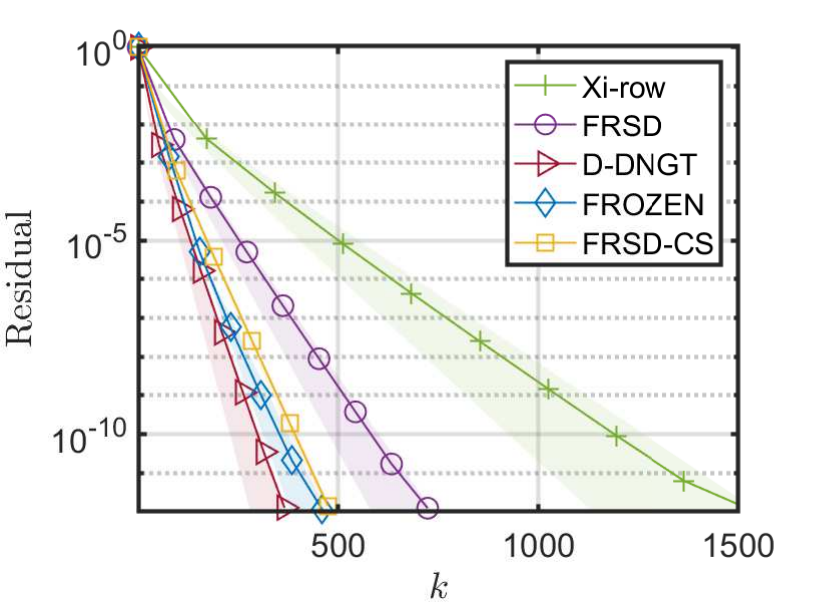}
    \caption{
   }
    \label{fig:test-I-high-iter}
  \end{subfigure}
  \hfill
  \begin{subfigure}[a]{0.45\textwidth}
    \includegraphics[width=\textwidth]{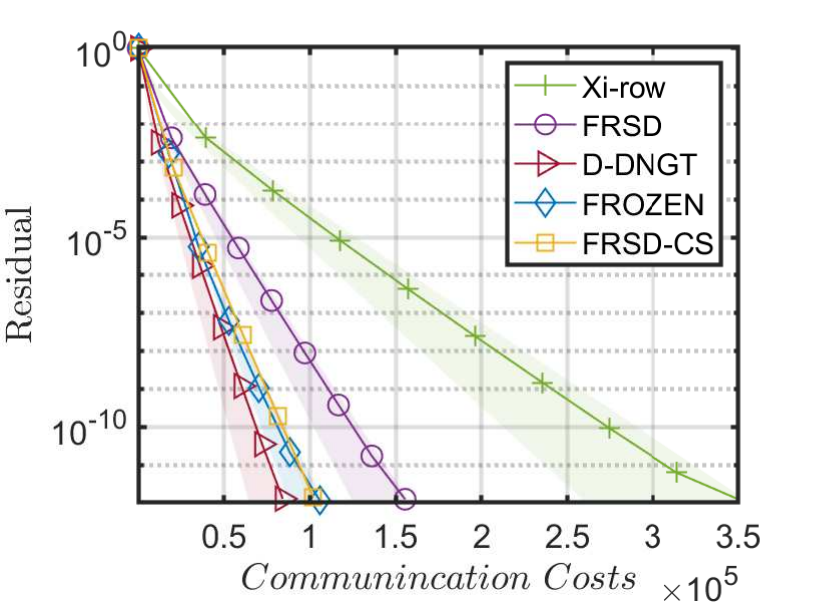}
    \caption{
    }
     \label{fig:test-I-high-comm}
  \end{subfigure}
   \caption{
  Distributed logistic regression problem ($n=200$, $p=15$) over 20 random  directed graphs with a high connectivity ratio $\phi =  0.15 $. Solid curves represent the average and the shaded region represents the range statistics.} \label{fig:test-I-high}
\end{figure}
\begin{figure}[h!] 
  \begin{subfigure}[a]{0.45\textwidth}
    \includegraphics[width=\textwidth]{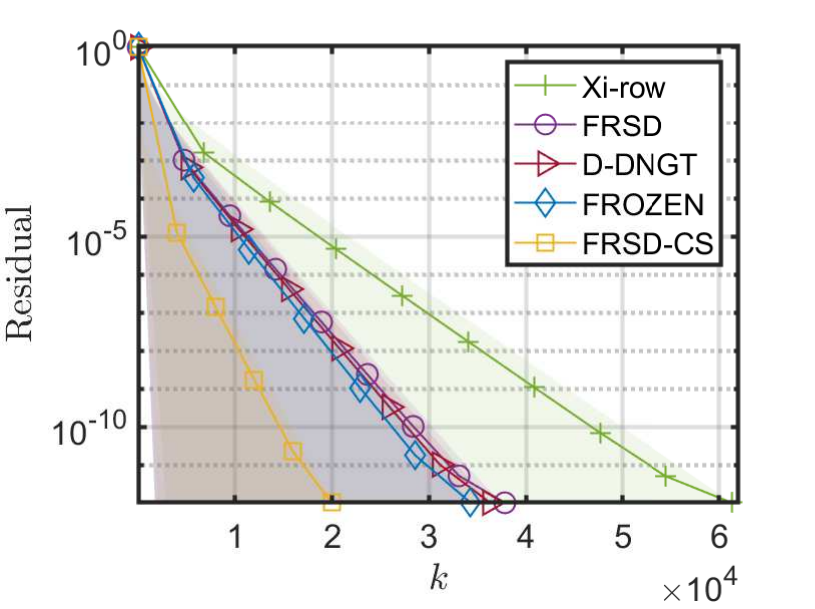}
    \caption{
   }
    \label{fig:test-I-low-iter}
  \end{subfigure}
  \hfill
  \begin{subfigure}[a]{0.45\textwidth}
    \includegraphics[width=\textwidth]{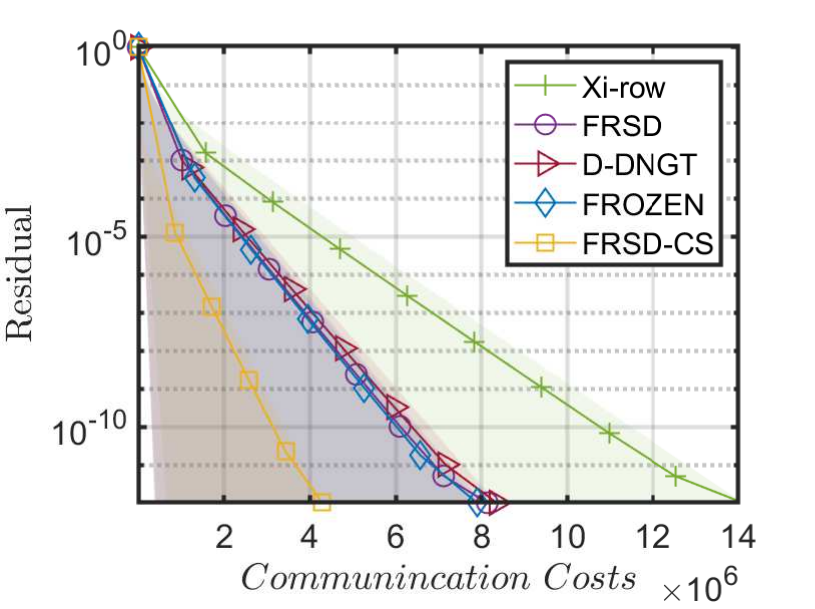}
    \caption{
    }
    \label{fig:test-I-low-comm}
  \end{subfigure}
   \caption{
  Distributed logistic regression problem ($n=200$, $p=15$) over 20 random  directed graphs with a low connectivity ratio $\phi =  0.015 $. Solid curves represent the average and the shaded region represents the range statistics.}
  \label{fig:test-I-low}
\end{figure}

\paragraph{Scenario II ($n<p$)} \sa{We set $n=25$ 
and generated $20$ random graphs with connectivity ratio $\phi=0.1$, i.e., each random graph has $60$ edges. We used \texttt{w1a.t} (testing) dataset \cite{chang2011libsvm} with 47,272 data points with each data point consisting of 300 features vector -- implying $p=301$ to model the intercept. For classification, we again used the binary logistic regression model of Section~\ref{sec:log_reg_test_deterministic} with $p=301$ and $m_i= 400$ for all $i\in\cV$. In Figure~\ref{fig:test-II}, we report the results for this scenario. Indeed, when $n<p$, we observe that FRSD and FRSD-CS are competitive against FROZEN while performing better than both \rev{Xi-row} and D-DGNT. Finally, unlike Scenario I, when $n<p$, the advantage of FRSD and FRSD-CS over the other row-stocastic method in terms of lower communication overhead becomes more apparent: while the residuals for FRSD and FROZEN show the same decay patterns as the iteration counter increases, one can observe that FRSD performs better than FROZEN considering the amount of data broadcast per node since both FRSD and FRSD-CS need each node to broadcast $n+p$-dimensional vector, i.e., $\approx p$ as $p\gg n$, FROZEN requires broadcasting $n+2p$-dimensional vector, i.e., $\approx 2p$; hence, almost twice the
\rev{communication overhead} of FRSD.} 

\begin{figure}[h!] 
  \begin{subfigure}[a]{0.45\textwidth}
    \includegraphics[width=\textwidth]{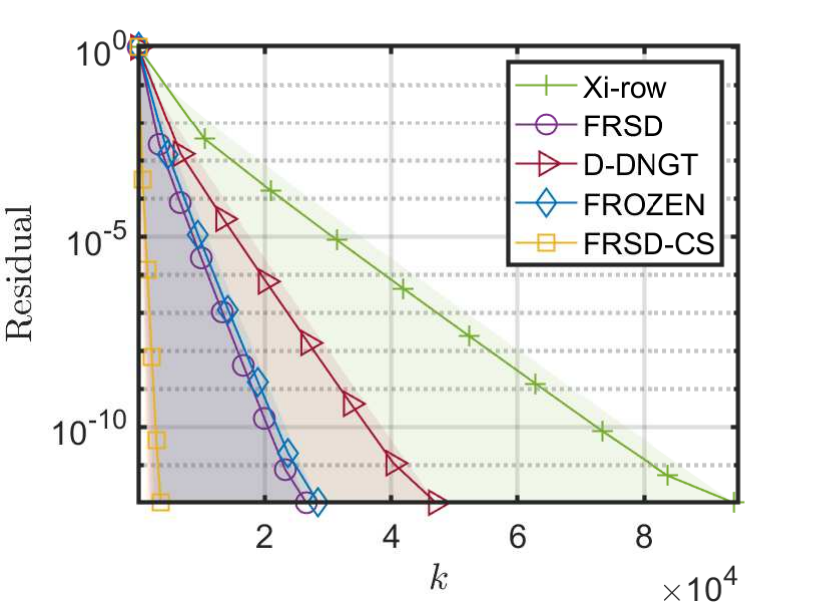}
    \caption{
   }
    \label{fig:test-II-iter}
  \end{subfigure}
  \hfill
  \begin{subfigure}[a]{0.45\textwidth}
    \includegraphics[width=\textwidth]{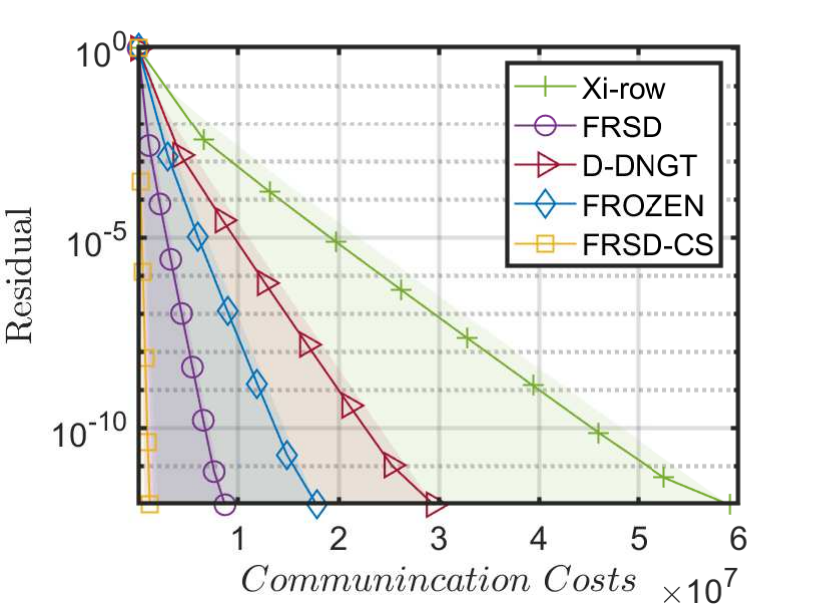}
    \caption{
    }
    \label{fig:test-II-comm}
  \end{subfigure}
   \caption{
  Distributed logistic regression problem ($n=25$, $p=301$) over 20 random  directed graphs with a low connectivity ratio $\phi =  0.1$. Solid curves represent the average and the shaded region represents the range statistics.}  
  \label{fig:test-II}
\end{figure}

\section{Conclusion}
{In this paper, we proposed a distributed optimization algorithm, {FRSD, for decentralized consensus optimization} over directed graphs. {FRSD only employs a row-stochastic matrix for local messaging with neighbors, making it desirable for broadcast-based communication systems.} 
The proposed algorithm achieves a geometric convergence to the global optimal {when agents' cost functions are strongly convex with Lipschitz continuous gradients}. {Empirical results demonstrated the efficacy of the 
implicit gradient tracking technique employed by FRSD, \rev{which led to: (i) reduction in the data stored, and (ii) reduction in the data broadcast, for each node. More precisely, FRSD does not need to store  $\bm{x}$ iterate from the previous iteration while it is needed for all other methods explicitly using the gradient tracking term; furthermore, FRSD also eliminates the need for broadcasting a variable related to gradient tracking.} 
\sa{As a future research direction, we consider extending our results to the asynchronous computation setting over directed communication graphs.}}} 

\end{document}